\documentclass[12pt]{amsart}
\usepackage{graphicx}
\usepackage[active]{srcltx}
\usepackage{amsthm,mathrsfs}
\usepackage{a4wide}
\usepackage[english]{babel}
\usepackage{amsfonts}
\usepackage{amsmath}
\usepackage{amssymb}
\newtheorem{lemma}{Lemma}
\newtheorem{theorem}{Theorem}

\usepackage{float}
\usepackage{url}
\usepackage{enumitem}

\newcommand {\p} {\mathbb{P}}

\newcommand {\R} {\mathbb{R}}

\newcommand {\ve} {\varepsilon}

\makeatletter\def\blfootnote{\xdef\@thefnmark{}\@footnotetext}\makeatother
\allowdisplaybreaks
\parindent0mm

\title[On parametric Thue--Morse Sequences and Lacunary Trigonometric Products]{\bf On parametric Thue--Morse Sequences and Lacunary Trigonometric Products}

\author{Christoph Aistleitner} 
\address{Institute of Financial Mathematics and Applied Number Theory, Johannes Kepler University Linz}
\email{aistleitner@math.tugraz.at}

\author{Roswitha Hofer} 
\address{Institute of Financial Mathematics and Applied Number Theory, Johannes Kepler University Linz}
\email{roswitha.hofer@jku.at}

\author{Gerhard Larcher} 
\address{Institute of Financial Mathematics and Applied Number Theory, Johannes Kepler University Linz}
\email{gerhard.larcher@jku.at}

\thanks{The first author is supported by a Schr\"odinger scholarship of the Austrian
Research Foundation (FWF). The second and the third author are supported by the Austrian Science Fund (FWF): Project F5505-N26 and F5507-N26, which are part of the Special Research Program ``Quasi-Monte Carlo Methods: Theory and Applications''}

\subjclass[2010]{ 11B85, 11K38, 11B83, 11A63, 68R15}

\begin{document}

\begin{abstract}
One of the fundamental theorems of uniform distribution theory states that the fractional parts of the sequence $(n \alpha)_{n \geq 1}$ are uniformly distributed modulo one (u.d. mod 1) for every irrational number $\alpha$. Another important result of Weyl states that for every sequence $(n_k)_{k \geq 1}$ of distinct positive integers the sequence of fractional parts of $(n_k \alpha)_{k \geq 1}$ is u.d. mod 1 for almost all $\alpha$. However, in this general case it is usually extremely difficult to classify those $\alpha$ for which uniform distribution occurs, and to measure the speed of convergence of the empirical distribution of $(\{n_1 \alpha\}, \dots, \{n_N \alpha\})$ towards the uniform distribution. In the present paper we investigate this problem in the case when $(n_k)_{k \geq 1}$ is the Thue--Morse sequence of integers, which means the sequence of positive integers having an even sum of digits in base 2. In particular we utilize a connection with lacunary trigonometric products $\prod^{L}_{\ell=0} 
\left|\sin \pi 2^{\ell} \alpha \right|$, and by giving sharp metric estimates for such products we derive sharp metric estimates for exponential sums of $\left(n_{k} \alpha\right)_{k \geq 1}$ and for the discrepancy of $\left(\left\{n_{k} \alpha\right\}\right)_{k \geq 1}.$ Furthermore, we comment on the connection between our results and an open problem in the metric theory of Diophantine approximation, and we provide some explicit examples of numbers $\alpha$ for which we can give estimates for the discrepancy of $\left(\left\{n_{k} \alpha\right\}\right)_{k \geq1}$.
\end{abstract}

\date{}
\maketitle

\section{Introduction and statement of results} \label{sect_int}

Throughout the rest of this paper, let $(n_k)_{k \geq 1}$ denote the sequence of positive integers which have an even sum-of-digits function in base $2$, sorted in increasing order. In other words, $(n_k)_{k \geq 1}$ is the sequence of Thue--Morse integers $(0,3,5,6,9,10,12,\dots)$. Furthermore, we write $(m_k)_{k \geq 1}$ for the sequences of those numbers which are \emph{not} contained in $(n_k)_{k \geq 1}$, sorted in increasing order; thus $(m_k)_{k \geq 1} = (1,2,4,7,8,11,\dots)$. The numbers $(n_k)_{k \geq 1}$ are frequently called \emph{evil numbers}, while the numbers $(m_k)_{k \geq 1}$ are called \emph{odious numbers}.\\

The Thue--Morse integers are characterized by the Thue--Morse sequence 
$$
\left(t_{n}\right)_{n \geq 0} = \left(0,1,1,0,1,0,0,1,1,0,0,1,0,1,1,0,\ldots\right)
$$
which has been discovered several times in the literature. For an extensive survey, see~\cite{Shallit}. In our notation we have $n \in (n_k)_{k\geq1}$ if and only if $t_{n}=0.$\\

In this paper we analyze exponential sums of the form $\sum^{N}_{k=1} e^{2 \pi i n_{k} \alpha}$ for reals $\alpha \in \left[\left.0,1\right)\right.$, and -- what is intimately connected -- products of the form $\prod^{L}_{\ell=0} \left|\sin \pi 2^{\ell} \alpha\right|$, as well as distribution properties of the sequence $\left(\left\{n_{k} \alpha\right\}\right)_{k \geq 1}$. To quantify the regularity of the distribution of a finite set of real numbers in $[0,1]$ we use the notion of the \emph{star-discrepancy} $D_N^*$. For given numbers $x_1, \dots, x_N$, their star-discrepancy is defined by
$$
D_N^* (x_1, \dots, x_N) = \sup_{a \in [0,1]} \left| \frac{1}{N} \sum_{n=1}^N \mathbf{1}_{[0,a]} (x_n) - a \right|.
$$
An infinite sequence $(x_n)_{n \geq 1}$ whose discrepancy $D_N^*$ tends to zero as $N \to \infty$ is called \emph{uniformly distributed modulo one} (u.d. mod 1). Informally speaking, the star-discrepancy is a measure for the deviation between uniform distribution on $[0,1]$ and the empirical distribution of a given point set; in probabilistic terminology this corresponds to the Kolmogorov--Smirnov statistic. Discrepancy theory is a rich subject, which has close links to number theory, probability theory, ergodic theory and numerical analysis. For more information on discrepancy theory, we refer to the standard monographs~\cite{dts,knu}.\\

Sequences of the form $\left(\left\{n \alpha\right\}\right)_{n \geq 1}$ are called \emph{Kronecker sequences}. One of the fundamental results of discrepancy theory states that such a sequence is u.d. mod 1 if and only if $\alpha$ is irrational. It is also well-known that the discrepancy of such a sequence depends on Diophantine approximation properties of $\alpha$. More precisely, we have
$$
\Omega \left(\sum^{m(N)}_{n=1} a_{n}\right) = N D^{*}_{N} (\{\alpha\}, \dots, \{N \alpha\}) = \mathcal{O}\left(\sum^{m(N)}_{n=1} a_{n}\right)
$$
as $N \to \infty$, where $a_{1}, a_{2}, a_{3}, \ldots$ are the continued fraction coefficients of $\alpha$ and where $m(N)$ is defined by $q_{m(N)-1} < N \leq q_{m(N)}$ with $q_{1}<q_{2}<q_{3}< \ldots$ denoting the best approximation denominators of $\alpha$ (see for example~\cite[Corollary~1.64]{dts}). Hence 
$$
N D^{*}_{N} (\{\alpha\}, \dots, \{N \alpha\}) = \mathcal{O} \left(\log N\right) \qquad \textrm{as $N \to \infty$}
$$ 
if $\alpha$ has bounded continued fraction coefficients, and, as a consequence of metric results of Khintchine~\cite{khin}, for every $\ve > 0$ we have
$$
N D^{*}_{N} (\{\alpha\}, \dots, \{N \alpha\}) = \mathcal{O} \left( \frac{(\log N) (\log \log N)^{1+\ve})}{N} \right) \qquad \textrm{as $N \to \infty$}
$$ 
for almost all $\alpha \in \mathbb{R}$.\\\\

It is known (and it will be re-proved implicitly in this paper; see Section 6) that the sequence $\left(\left\{n_{k} \alpha\right\}\right)_{k\geq 1}$ (which we will call \emph{Thue--Morse--Kronecker sequence}) is also uniformly distributed in the unit interval if and only if $\alpha$ is irrational. However, it turns out to be a very difficult task to give sharp estimates for the discrepancy of this sequence for concrete values of $\alpha$. As we will see, the discrepancy of a Thue--Morse--Kronecker sequence $\left(\left\{n_{k} \alpha \right\}\right)_{k \geq 1}$ depends on {D}iophantine approximation properties {\em and} properties of the digit representation of $\alpha$ in base 2. Until now there are only few (non-trivial) cases of $\alpha$ where we have enough information about both of these aspects.\\

Exponential sums and discrepancy theory are intimately connected. One such connection is Weyl's criterion, two others are the Erd\H os--Tur\'an inequality and Koksma's inequality. The Erd\H os--Tur\'an inequality (see for example~\cite{dts,knu,rt}) states that for points $x_1, \dots, x_N \in [0,1]$ we have
\begin{eqnarray} \label{erdtur}
D_N^* (x_1, \dots, x_N) \leq \frac{1}{H+1} + \sum_{h=1}^H \frac{1}{h} \left|\frac{1}{N} \sum_{k=1}^N e^{2 \pi i h x_k} \right|,
\end{eqnarray}
where $H$ is an arbitrary positive integer. Koksma's inequality says that 
\begin{equation} \label{koksma}
\left| \int_0^1 f(x)~dx - \frac{1}{N}\sum_{k=1}^N f(x_k) \right| \leq (\textup{Var}_{[0,1]} f) D_N^*(x_1, \dots, x_N),
\end{equation}
for any function $f$ having bounded variation on $[0,1]$. When combined, the Erd\H os--Tur\'an inequality and Koksma's inequality show that exponential sums can be used to obtain both upper and lower bounds for the discrepancy.\\

As an explicit lower bound from~\eqref{koksma} we get (compare for example~\cite{knu}):
\begin{equation} \label{BL1}
D^{*}_{N}  \left(x_{1}, \ldots, x_{N}\right) \geq \frac{1}{4H} \left|\frac{1}{N}\sum^{N}_{k=1} e^{2 \pi i H x_{k}}\right|,
\end{equation}
where $H$ is an arbitrary positive integer. Koksma's inequality and its multi-dimensional generalization are also the cornerstone of the application of low-discrepancy point sets in numerical integration (so-called \emph{Quasi-Monte Carlo integration}; see for example~\cite{dpd,nwt}).\\

Consequently, in this paper we will mainly be concerned with the problem of investigating exponential sums of the form $\sum^{N}_{k=1} e^{2 \pi i n_{k} \alpha}$. It turns out that this investigation relies on studying lacunary products of the form $\prod^{L}_{\ell=0} \left|\sin \pi 2^{\ell} \alpha\right|$. Furthermore we study the discrepancy of $\left(\left\{n_{k} \alpha\right\}\right)_{k\geq 1}$. For all three topics we obtain sharp metric results. The investigation of the lower bound for the discrepancy leads to a challenging open problem in Diophantine approximation. 
Finally, we consider two concrete non-trivial special examples for $\alpha$.\\

The main results of this paper are the following. (Throughout the rest of this paper, we write $\exp(x)$ for $e^x$.)\\

\begin{theorem} \label{th_thue}
Let $(n_k)_{k \geq 1}$ be the sequence of Thue--Morse integers, and let $h \neq 0$ be an integer. Let $\ve>0$ be arbitrary. Then for almost all $\alpha \in (0,1)$ we have
\begin{equation} \label{ththue1a}
\left|\sum_{k=1}^N e^{2 \pi i h n_k \alpha}\right| ~\leq~ \exp\left(\left(\frac{\pi}{\sqrt{\log 2}}+\ve\right) (\log N)^{1/2} (\log \log \log N)^{1/2}\right)
\end{equation} 
for all $N \geq N_0 (\alpha,h, \ve)$, and 
\begin{equation} \label{ththue1b}
\left|\sum_{k=1}^N e^{2 \pi i h n_k \alpha} \right| ~\geq~ \exp \left(\left(\frac{\pi}{\sqrt{\log 2}}-\ve\right) (\log N)^{1/2} (\log \log \log N)^{1/2}\right)
\end{equation}
for infinitely many $N$.
\end{theorem}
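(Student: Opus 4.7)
The plan is to derive Theorem~\ref{th_thue} from the sharp metric estimates for the lacunary products
\[
P_L(\beta) := \prod_{\ell=0}^{L-1} 2\left|\sin\pi 2^{\ell}\beta\right|,
\]
which, as announced in the abstract, are the main technical content of the earlier sections of the paper. The bridge between the two problems is the classical generating-function identity
\[
\prod_{\ell=0}^{L-1}\bigl(1-z^{2^{\ell}}\bigr) \;=\; \sum_{n=0}^{2^L-1}(-1)^{s_2(n)} z^{n},
\]
which, combined with the geometric series, yields
\[
\sum_{\substack{0 \leq n < 2^L \\ s_2(n) \textrm{ even}}} e^{2\pi i n\beta} \;=\; \frac{1}{2}\cdot\frac{1-e^{2\pi i 2^{L}\beta}}{1-e^{2\pi i\beta}} \;+\; \frac{1}{2}\prod_{\ell=0}^{L-1}\bigl(1-e^{2\pi i 2^{\ell}\beta}\bigr).
\]
Applied with $\beta=h\alpha$ it expresses $\sum_{k=1}^{2^{L-1}}e^{2\pi i h n_{k}\alpha}$ as a sum of two terms: the first has modulus at most $\tfrac{1}{2}\|h\alpha\|^{-1}$, an a.s.\ finite constant independent of $L$, while the second has modulus exactly $\tfrac{1}{2}P_L(h\alpha)$. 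Since $\{h\alpha\}$ is uniformly distributed on $[0,1)$ whenever $\alpha$ is, the metric conclusions for $P_L(\beta)$ transfer verbatim to $P_L(h\alpha)$.

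To reach an arbitrary $N$ I would decompose $\sum_{k=1}^{N}$ according to the binary expansion $n_N=\sum_{j=1}^{s}2^{b_{j}}$ with $b_1>\cdots>b_s$: this partitions the first $N$ evil numbers into $s=O(\log N)$ dyadic blocks, on each of which the subsum admits an analogous factored expression (the parity of the higher bits merely toggles whether one counts evil or odious $n<2^{b}$ within the block, both cases being governed by the same identity up to a sign on the product). Each subsum is thereby bounded by $\tfrac{1}{2}P_b(h\alpha)+\tfrac{1}{2}\|h\alpha\|^{-1}$ for some $b\leq L:=\lceil\log_2 n_N\rceil$. Invoking the sharp upper metric estimate
\[
\log P_b(h\alpha)\leq (\pi+\ve')\sqrt{b\log\log b}\qquad\text{for all }b\geq b_0(\alpha,\ve'),
\]
established earlier and hence uniformly valid for $b\leq L$, one arrives at
\[
\left|\sum_{k=1}^{N} e^{2\pi i h n_{k}\alpha}\right| \;\ll\; (\log N)\exp\!\Bigl((\pi+\ve')\sqrt{L\log\log L}\Bigr).
\]
With $L=\log N/\log 2+O(1)$ the polynomial-in-$L$ prefactor is absorbed into the $\ve$-slack in the exponent, and converting back to $N$ yields \eqref{ththue1a}. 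The lower bound \eqref{ththue1b} follows by specializing to $N=2^{L-1}$ along a subsequence $L_k$ on which the matching lower metric estimate $P_{L_k}(h\alpha)\geq\exp((\pi-\ve')\sqrt{L_k\log\log L_k})$ holds infinitely often (also from the earlier sections), since the geometric-series term in the identity is then negligible compared to $\tfrac{1}{2}P_{L_k}(h\alpha)$ for large $k$.

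The principal difficulty is \emph{not} this reduction but the sharp metric estimates on $\log P_L(\alpha)=\sum_{\ell=0}^{L-1}\log(2|\sin\pi 2^{\ell}\alpha|)$ themselves. This is a dependent sum of unbounded random variables driven by the doubling map, and pinning down the correct constant $\pi$ in front of $\sqrt{L\log\log L}$ calls for a careful Fourier/moment computation together with a Borel--Cantelli argument along a suitable sequence of blocks. The subtlest technical point will be extending a subsequence upper bound to one valid for \emph{all} large $L$, in the absence of monotonicity of $\log P_L$, and matching this by a near-independence lower bound for well-separated $L$. Those are the business of the earlier sections; granted them, Theorem~\ref{th_thue} is the short deduction sketched above.
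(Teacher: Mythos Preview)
Your proposal is correct and follows essentially the same route as the paper: the generating-function identity you write is precisely the decomposition derived in the introduction (equations~\eqref{lacproducts}--\eqref{lacproductsLB}), and your dyadic handling of general $N$ matches the argument in Section~\ref{sect_proof} (the paper decomposes $N$ rather than $n_N$ in binary, but the two are equivalent here since each dyadic block of integers contains a shifted copy of the evil or odious numbers in $[0,2^b)$). The one substantive difference is your treatment of the term $\prod_\ell(1+e^{2\pi i h2^\ell\alpha})$: you recognize it as a Dirichlet kernel and bound it by the constant $\tfrac{1}{2}\|h\alpha\|^{-1}$, whereas the paper instead invokes the second half of Lemma~\ref{lemmath1} (the LIL for $f_2(\alpha)=\log|2\cos\pi\alpha|$, together with the computation $\sigma_{f_2}=0$) to obtain the weaker bound $\exp(\hat\ve\sqrt{2L\log\log L})$. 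Your shortcut is simpler and entirely sufficient for Theorem~\ref{th_thue}; the paper's route has the modest advantage of treating the sine and cosine products uniformly within the same lacunary-LIL framework, and the $f_2$-result is of some independent interest.
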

~\\

Note that the exponential function in~\eqref{ththue1a} grows more slowly than any (fixed) power of $N$; but faster than any (fixed) power of $\log N$. In other words, as a consequence of Theorem~\ref{th_thue} for every $\ve>0$ and every $A > 0$ we have
$$
\Omega \left(\left(\log N\right)^{A}\right) = \left|\sum_{k=1}^N e^{2 \pi i h n_k \alpha}\right| = \mathcal{O} \left(N^{\ve}\right) \qquad \textrm{as $N \to \infty$},
$$
for almost all $\alpha$.\\

It will turn out that Theorem~\ref{th_thue} is an almost immediate consequence of the following result on lacunary trigonometric products.\\

\begin{theorem} \label{th_theorem2}
Let $ \ve > 0$ be arbitrary. Then for almost all $\alpha \in \left(0,1\right)$ we have
\begin{equation} \label{FO2}
\prod^{L}_{\ell=0} \left|2 \sin \pi 2^{\ell} \alpha\right| \leq \exp\left(\left(\pi+\ve\right)\sqrt{L \log \log L}\right)
\end{equation}
for all $L \geq L_0\left(\alpha, \ve\right)$, and
\begin{equation} \label{FO3}
\prod^{L}_{\ell=0} \left|2 \sin \pi 2^{\ell} \alpha \right| \geq \exp\left(\left(\pi- \ve\right)\sqrt{L \log \log L}\right)
\end{equation}
for infinitely many $L$.
\end{theorem}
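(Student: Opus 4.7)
The plan is to reduce to studying the logarithmic Birkhoff-type sum
$$
S_L(\alpha) \;:=\; \log \prod_{\ell=0}^L \bigl|2\sin\pi 2^\ell\alpha\bigr|\;=\;\sum_{\ell=0}^L f(2^\ell\alpha), \qquad f(x) := \log\bigl|2\sin\pi x\bigr|,
$$
so that \eqref{FO2} and \eqref{FO3} together amount to $\limsup_{L\to\infty} S_L(\alpha)/\sqrt{L\log\log L} = \pi$ for almost every $\alpha$. I would use three basic ingredients: Jensen's identity $\int_0^1 f\,dx=0$; the Fourier expansion $f(x)=-\sum_{k\geq1} k^{-1}\cos(2\pi kx)$, which in particular lets one rewrite $S_L$ as a trigonometric polynomial whose coefficient at frequency $m$ is explicit in terms of the $2$-adic valuation, namely $-(2^{\min(v_2(m),L)+1}-1)/m$; and the fact that $f$ has logarithmic singularities on $\Z$, so $\p(f<-t)\sim e^{-t}/\pi$ as $t\to\infty$.

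For the upper bound \eqref{FO2}, my plan is the moment generating function method. Estimate
$$
I_L(\lambda) \;=\; \E\!\bigl[\exp(\lambda S_L)\bigr] \;=\; \int_0^1 \prod_{\ell=0}^L |2\sin\pi 2^\ell\alpha|^\lambda\, d\alpha
$$
for $\lambda>0$, exploiting the near-independence of the factors that comes from the strong mixing of the doubling map, combined with the expansion of the one-dimensional Beta-type integral $\int_0^1 (2\sin\pi x)^\lambda\,dx$ about $\lambda=0$. Feeding the resulting bound into Markov's inequality with $\lambda$ optimized at the scale $\lambda=\Theta(\sqrt{\log\log L/L})$, and then applying Borel--Cantelli along a dyadic subsequence $L_n=2^n$ with a monotonicity step for intermediate $L$, would deliver the almost sure upper bound.

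For the lower bound \eqref{FO3} I would use a converse Borel--Cantelli / second-moment approach. Partition $\{0,\ldots,L_n\}$ into geometrically growing blocks: by the exponential decay of correlations $\int_0^1 f(x)f(2^k x)\,dx=\pi^2/(12\cdot 2^k)$, the corresponding partial sums behave as approximately independent random variables. A Salem--Zygmund type central limit theorem on each block, combined with the Kochen--Stone form of the converse Borel--Cantelli lemma, would produce infinitely many $L_n$ for which $S_{L_n}>(\pi-\ve)\sqrt{L_n\log\log L_n}$ almost surely.

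The principal obstacle is the combination of two delicate features: the \emph{lacunary} structure of $(2^\ell)_\ell$, which produces nontrivial covariances that have to be tracked exactly, and the \emph{unboundedness} of $f$, which rules out a direct appeal to the standard LIL for bounded lacunary function systems (\`a la Salem--Zygmund, Philipp, or Fukuyama). I would handle this via a truncation $f = f_K + (f-f_K)$ at a level $K=K(L)\asymp \log L$: the bounded part $f_K$ can be treated by the moment method above, while the unbounded tail is controlled pointwise via Borel--Cantelli using the exponential tail bound for $f$. Extracting the sharp constant $\pi$ then amounts to optimizing the truncation threshold against the tail mass and summing the correlation series in the MGF estimate with precise constants.
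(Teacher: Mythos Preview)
Your reduction to the lacunary sum $S_L(\alpha)=\sum_{\ell}f(2^\ell\alpha)$ with $f(x)=\log|2\sin\pi x|$, together with the Fourier expansion $f(x)=-\sum_{k\geq1}k^{-1}\cos(2\pi kx)$, is exactly how the paper begins. From that point on, however, the paper takes a cleaner and structurally different route, and your outline has a genuine gap at the step where the sharp constant $\pi$ is supposed to emerge.

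The paper does \emph{not} attack the MGF $\int\exp(\lambda S_L)$ directly with the goal of extracting the optimal constant. Instead it uses a \emph{Fourier} truncation $f=p_d+r_d$, where $p_d$ is the $d$-th partial sum of the cosine series. The point is that $p_d$ is a trigonometric polynomial, hence Lipschitz, so the classical exact LIL of Fortet--Maruyama applies and gives $\limsup \sum_{\ell<L}p_d(2^\ell\alpha)/\sqrt{2L\log\log L}=\sigma_{p_d}$ with the precise limiting variance. For the remainder $r_d$ the paper proves a new bounded LIL (its Theorem~\ref{lac_co1}) valid for \emph{any} function whose Fourier coefficients decay like $1/j$, with an upper constant proportional to $\|r_d\|_2^{1/4}$; since $\|r_d\|_2\to0$ as $d\to\infty$, this contribution vanishes in the limit. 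Letting $d\to\infty$ then yields both the upper and the lower bound simultaneously, with the sharp constant coming from an explicit computation of $\sigma_f^2$ via the covariance series $\sum_r 2^{-r}$.

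Your proposed level truncation $f=f_K+(f-f_K)$ with $K\asymp\log L$ does dispose of the unbounded tail for almost every $\alpha$ (indeed $\|2^\ell\alpha\|\gg 1/(\ell\log^2\ell)$ eventually, so the tail is identically zero on the orbit for large $L$). But then $f_K$ depends on $L$, and the direct Takahashi--Philipp style MGF bound you describe --- with its Cauchy--Schwarz splitting into even/odd blocks and the crude estimate of the block variance $W_i$ --- loses a constant factor and will not deliver the exact $\sigma_f$. This is not a technicality: the paper's own MGF inequality (Lemma~\ref{lac_lemma1}) gives a constant of the form $Cq/(q-1)$ with $C$ well above the optimal value, and is used only for the remainder term, not for the main term. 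To recover the sharp constant by your route you would effectively have to reprove Fortet's theorem inside the MGF estimate; and your lower bound via block CLT plus Kochen--Stone likewise reinvents that wheel. The paper's Fourier truncation sidesteps all of this by reducing to a known exact LIL for the main part and isolating the genuinely new difficulty (unboundedness) in a remainder whose small $L^2$-norm makes a crude bound sufficient.
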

~\\ 

This result is a consequence of a more general result, Theorem~\ref{lac_co1}, which will be formulated later in Section~\ref{sect_lac} since it needs some technical prerequisites.\\

From the lower bound in Theorem~\ref{th_thue} and formula~\eqref{BL1} we immediately obtain a metric lower bound for the discrepancy $D_{N}^{*} \left(\left\{n_{1} \alpha\right\}, \ldots, \left\{n_{N} \alpha\right\}\right)$ of the Thue--Morse--Kronecker sequence. However in Theorem~\ref{th_thue2} it turns out that the true metric order of the discrepancy $D_{N}^{*} \left(\left\{n_{1} \alpha\right\}, \ldots, \left\{n_{N} \alpha\right\}\right)$ is much larger.\\

\begin{theorem} \label{th_thue2}
Let $\left(n_{k}\right)_{k \geq 1}$ be the sequence of Thue--Morse integers. Let $\ve >0$ be arbitrary. Then for almost all $\alpha \in \left(0,1\right)$ we have
\begin{equation} \label{F3}
N D^{*}_{N} \left(\left\{n_{1} \alpha\right\}, \ldots, \left\{n_{N} \alpha\right\}\right)= \mathcal{O} \left(N^{1+\frac{\log \lambda}{\log 2} + \ve}\right) \qquad \mbox{as}~ N \rightarrow \infty,
\end{equation}
and
\begin{equation} \label{F4}
N D^{*}_{N} \left(\left\{n_{1} \alpha\right\}, \ldots, \left\{n_{N} \alpha\right\}\right) \geq \left(N^{1+\frac{\log \lambda}{\log 2} - \ve}\right) \qquad \mbox{as}~ N \rightarrow \infty
\end{equation}
for infinitely many $N$. Here $\lambda$ is a real constant defined below for which it is known that 
\begin{equation} \label{*}
0.66130 < \lambda < 0.66135.
\end{equation}
\end{theorem}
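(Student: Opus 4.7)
The plan is to reduce both bounds to statements about the lacunary trigonometric products $P_L(\alpha) := \prod_{\ell=0}^{L-1}\left|2 \sin \pi 2^{\ell} \alpha\right|$ via the generating identity
\begin{equation*}
\sum_{n=0}^{2^L-1}(-1)^{t_n} e^{2 \pi i n \alpha} = \prod_{\ell=0}^{L-1}\left(1 - e^{2 \pi i 2^{\ell} \alpha}\right),
\end{equation*}
whose modulus is exactly $P_L(\alpha)$. Given $N$, I would set $L$ with $2^{L-1}\le N < 2^L$, so that the target rate $N^{1+\log\lambda/\log 2\pm\ve}$ becomes essentially $(2\lambda)^{L(1\pm\ve)}$. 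The constant $\lambda$ will enter naturally as the relevant extremal growth rate of $P_L(\beta)/2^L$ in $\beta$, supplied by Theorem~\ref{lac_co1}, and the whole argument is an $h$-uniform transfer of the almost-sure product bounds of Theorem~\ref{th_theorem2} to the counting function of $\left(\{n_k \alpha\}\right)_{k \le N}$.

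For the upper bound I would apply the Erd\H{o}s--Tur\'an inequality \eqref{erdtur} with a parameter $H$ to be optimised, reducing the task to controlling $\sum_{h=1}^{H} h^{-1}|S_N(h\alpha)|$, where $S_N(\beta)=\sum_{k\le N}e^{2 \pi i n_k \beta}$. Writing $N$ in binary and peeling off dyadic blocks, the identity applied with $\alpha\mapsto h\alpha$ gives the pointwise bound $|S_N(h\alpha)| \ll P_L(h\alpha) + (\textrm{Kronecker remainder})$; the remainder contributes only polylogarithmic error by Khintchine. The main sum $\sum_{h\le H} h^{-1} P_L(h\alpha)$ would then be broken into dyadic ranges $h\in[2^j,2^{j+1})$ and controlled, for almost every $\alpha$, by a Borel--Cantelli argument extracted from Theorem~\ref{lac_co1}: the target is $P_L(h\alpha)\ll h^{\tau}(2\lambda)^L$ simultaneously for all $h\le N^{O(1)}$ and a.e.\ $\alpha$. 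An optimisation of $H\asymp N^{1+\log\lambda/\log 2}$ then yields \eqref{F3}.

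For the lower bound, Koksma's inequality \eqref{BL1} combined with Theorem~\ref{th_thue} gives only a sub-polynomial estimate, so I would instead test $D_N^*$ against a single, carefully chosen interval $[0,a]$. From
\begin{equation*}
\sum_{n<2^L}(-1)^{t_n}\mathbf{1}_{[0,a]}(\{n\alpha\}) = \sum_{h\ne 0}\widehat{\mathbf{1}}_{[0,a]}(h)\prod_{\ell=0}^{L-1}\left(1-e^{2\pi i 2^{\ell}h\alpha}\right),
\end{equation*}
I would choose $a$ so that a single small value of $h$ dominates, reducing matters to the claim that, for almost every $\alpha$, there are infinitely many $L$ with $P_L(h\alpha)\ge (2\lambda)^{L}h^{-\ve}$. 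The main obstacle is precisely this matching extremal half: one must show almost surely that the orbit $(2^\ell h\alpha \bmod 1)_{\ell<L}$ comes close to a $\lambda$-maximising configuration for infinitely many $L$, and that this near-maximality survives the dominance check against the tail $\sum_{|h'|\ne h}$. I expect this converse Borel--Cantelli step to be the substantive content of Theorem~\ref{lac_co1}, and handling it so as to lose only an $N^{\ve}$ factor will be the real difficulty.
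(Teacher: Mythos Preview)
Your overall shape for the upper bound is right (Erd\H{o}s--Tur\'an, dyadic decomposition of $N$, Borel--Cantelli), but the source of the constant $\lambda$ is misidentified, and this propagates into both halves. Neither Theorem~\ref{th_theorem2} nor Theorem~\ref{lac_co1} produces $\lambda$: those results say that for a.e.\ fixed $\alpha$ the product $P_L(\alpha)$ is of size $\exp\big(O(\sqrt{L\log\log L})\big)$, which is \emph{sub-polynomial} in $N=2^L$ and far smaller than $(2\lambda)^L\approx N^{0.40}$. The constant $\lambda$ comes instead from the Fouvry--Mauduit $L^1$-asymptotic $I_1(L)=\int_0^1 P_L(\alpha)\,d\alpha \sim \kappa(2\lambda)^L$. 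For the upper bound one does not try to control each $P_L(h\alpha)$ uniformly in $h$; one simply integrates the whole Erd\H{o}s--Tur\'an sum: since $\int_0^1 P_L(h\alpha)\,d\alpha=I_1(L)$ for every integer $h$, one has $\int_0^1 \sum_{h\le H} h^{-1}|S_{2^L}(h\alpha)|\,d\alpha \ll (\log H)\,(2\lambda)^L$, and a single Markov/Borel--Cantelli step finishes. Your proposed route (``$P_L(h\alpha)\ll h^{\tau}(2\lambda)^L$ for all $h\le N^{O(1)}$'') runs into the dichotomy that $\tau>1$ is needed for the union bound but $\tau<1$ for the $h$-sum to converge; the first-moment-on-the-sum bypasses this entirely.

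For the lower bound there is a genuine gap. You dismiss \eqref{BL1} too quickly: it \emph{is} the right tool, but with a varying $H=h_L$ rather than $H=1$, reducing matters exactly to the claim that for a.e.\ $\alpha$ there are infinitely many $L$ and some $h_L\le 2^L$ with $h_L^{-1}P_L(h_L\alpha)\gg (2\lambda)^L\lambda^{\ve L}$. This statement is \emph{not} contained in Theorem~\ref{lac_co1} (which is a one-sided upper bound with no ``hitting'' content) nor in any LIL. The paper obtains it by a separate and substantial argument: first a level-set decomposition of $P_L$, using the $L^1$-asymptotic to locate a set $R_L\subset[0,1)$ of measure $\asymp(\lambda/\delta)^L$ on which $P_L\ge(2\delta)^L$ for some $\delta$ close to $\lambda$; then a metric Diophantine step showing that $\{h\alpha\}\in R_L$ for some $h\le H_L$ infinitely often a.e. This last step is a second-moment/divergence Borel--Cantelli argument on $\sum_{h\le H_L}\mathbf{1}_{R_L}(h\alpha)$, and the variance is controlled via GCD-sum estimates (Hilberdink) for $\big\|\sum_{h\le H_L}\mathbb{I}_L(h\cdot)\big\|_2$. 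None of this machinery appears in your sketch, and without it the lower bound does not go through.
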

~\\

The number $\lambda$ in Theorem~\ref{th_thue2} appears in a result of Fouvry and Mauduit~\cite{foumau}, which states that
\begin{equation} \label{FO4}
I_1(L) := \int^{1}_{0} \prod^{L-1}_{\ell=0} \left|\sin \pi 2^{\ell} \alpha \right|d \alpha = \kappa \lambda^{L} \left(1+o(1)\right)
\end{equation}
for $L\rightarrow\infty$, with constants $\kappa > 0$ and $\lambda$ with $0.654336 < \lambda <0.663197.$ In Lemma~\ref{lem7}, which is contained in Section~\ref{sect_proof_th2}, we will improve the estimate for $\lambda$ to~\eqref{*}. Note that as a consequence of~\eqref{F3} and~\eqref{*} we have
$$
N D^{*}_{N} \left(\left\{n_{1} \alpha\right\}, \ldots, \left\{n_{N} \alpha\right\}\right) = \mathcal{O} \left( N^{0.404} \right) \qquad \textrm{as $N \to \infty$}, 
$$
for almost all $\alpha$. This should be compared with the general metric discrepancy bound
\begin{equation} \label{baker}
N D^{*}_{N} \left(\left\{b_{1} \alpha\right\}, \ldots, \left\{b_{N} \alpha\right\}\right) = \mathcal{O} \left( \sqrt{N} (\log N)^{3/2 + \ve} \right) \qquad \textrm{as $N \to \infty$}
\end{equation}
for almost all $\alpha$, which holds for every strictly increasing sequence of positive integers $(b_k)_{k \geq 1}$ (see~\cite{baker}). It is known that in the general setting the upper bound given by~\eqref{baker} is optimal (up to powers of logarithms; see~\cite{bpt}). Thus the upper bound given in Theorem~\ref{th_thue2} is significantly stronger than the general metric discrepancy bound given by~\eqref{baker}. Furthermore we want to emphasize the fact that the precision of Theorem~\ref{th_thue2} is quite remarkable, in view of the fact that good bounds for the typical order of the discrepancy are only known for a very small number of classes of parametric sequences.\\

One of the main objectives of Theorems~\ref{th_thue} and~\ref{th_thue2} is to examine the degree of pseudorandomness of the parametric sequences $(\{n_k \alpha\})_{k \geq 1}$, and consequently also of the Thue--Morse sequence $(n_k)_{k \geq 1}$ of integers itself. By classical probability theory, for a sequence $X_1, X_2, \dots$ of independent, identically distributed (i.i.d.) random variables having uniform distribution on $[0,1]$ we have the law of the iterated logarithm (LIL)
\begin{equation} \label{salz}
\limsup_{N \to \infty} \frac{\left| \sum_{k=1}^N e^{2 \pi i h X_k} \right|}{\sqrt{2 N \log \log N}} = \frac{1}{\sqrt{2}} \qquad \textup{almost surely (a.s.)}
\end{equation}
and the Chung--Smirnov LIL for the Kolmogorov--Smirnov statistic (that is, for the discrepancy)
\begin{equation} \label{salz2}
\limsup_{N \to \infty} \frac{N D_N^*(X_1, \dots, X_N)}{\sqrt{2 N \log \log N}} = \frac{1}{2} \qquad \textup{a.s.};
\end{equation}
in other words, for a random sequence of points exponential sums are typically of asymptotic order roughly $\sqrt{N}$, and the discrepancy is typically also of the corresponding asymptotic order. Furthermore, similar results usually hold for exponential sums of $(r_k \alpha)_{k \geq 1}$ and for the discrepancy of $(\{r_k \alpha\})_{k \geq 1}$ when $(r_k)_{k \geq 1}$ is a ``random'' increasing sequence of integers. In the simplest model, when for every number $n \geq 1$ we decide independently and with fair probability whether it should be contained in $(r_k)_{k \geq 1}$ or not, then~\eqref{salz} holds almost surely (with respect to the probability space over which the $r_k$'s are defined) for almost all $\alpha$. In a similar fashion both results~\eqref{salz} and~\eqref{salz2} essentially remain valid when the random sequence $(r_k)_{k \geq 1}$ is constructed in a more complicated fashion (see for example~\cite{fu1,fu2,ras}).\\

Thus Theorem~\ref{th_thue} and Theorem~\ref{th_thue2} show that the typical\footnote{By a result which holds for ``typical'' $\alpha$ we mean a result which is valid for a set of full Lebesgue measure.} asymptotic order of exponential sums and of the discrepancy of $(\{n_k \alpha\})_{k \geq 1}$ for the Thue--Morse integers $(n_k)_{k \geq 1}$ does \emph{not} match with the corresponding order in the random case, by this means showing an interesting deviation from ``pseudorandom'' behavior of the sequence $(n_k)_{k \geq 1}$ itself. On the other hand, the behavior of $(\{n_k \alpha\})_{k \geq 1}$ also does \emph{not} match with the behavior of $n \alpha$-sequences for typical values of $\alpha$. More precisely, as already mentioned above, as a consequence of metric results of Khintchine~\cite{khin} and due to the fact that the discrepancy of $(\{n \alpha\})_{n \geq 1}$ can be expressed in terms of the continued fractions expansion of $\alpha$, we have
\begin{equation} \label{khin}
D_N^*( \{\alpha\}, \{2 \alpha\}, \dots, \{N \alpha\}) = \mathcal{O} \left( \frac{(\log N) (\log \log N)^{1+\ve})}{N} \right) \qquad \textrm{ as $N \to \infty$}
\end{equation}
for almost all $\alpha$. Consequently, by Theorem~\ref{th_thue2}, the typical asymptotic order of the discrepancy of parametric sequences $(\{n_k \alpha\})_{k \geq 1}$ is significantly larger than that of typical $n \alpha$-sequences, and by Theorem~\ref{th_thue} this is also true for exponential sums. Thus, with respect to exponential sums as well as with respect to the discrepancy, parametric sequences $(n_k \alpha)_{k \geq 1}$ generated by the Thue--Morse integers $(n_k)_{k \geq 1}$ occupy a position somewhere between $n \alpha$-sequences and truly random sequences. We also want to comment on the fact that there is a huge difference between the order of the exponential sums in Theorem~\ref{th_thue} and the order of the discrepancy in Theorem~\ref{th_thue2}. This is a very surprising phenomenon, which is related to problems from metric Diophantine approximation (which are implicit in the proof of Theorem~\ref{th_thue2}, and are briefly discussed in the concluding Section~\ref{sec_conc}).\\

As already mentioned earlier, it is rather difficult to give the right order for the exponential sums in Theorem~\ref{th_thue}, the trigonometric products in Theorem~\ref{th_theorem2}, and the discrepancy of $\left(\left\{n_{k} \alpha\right\}\right)_{k \geq 1}$ for concrete non-trivial examples of $\alpha$. What do we mean by a ``non-trivial'' example? In the first part of Section~\ref{sect_proof_th4} we will point out the following facts: 
\begin{itemize}
 \item The order of the discrepancy of the pure Kronecker sequence $\left(\left\{n \alpha\right\}\right)_{k \geq 1}$ never is significantly larger than the order of the  discrepancy of the Thue--Morse--Kronecker sequence $\left(\left\{n_{k} \alpha\right\}\right)_{k \geq 1}$.
 \item If the order of the discrepancy $D_{N}^{*}$ of the pure Kronecker sequence satisfies $N D_{N}^{*}= \Omega \left(N^{\frac{\log 3}{\log 4}}\right)$ then the discrepancy $\widetilde{D}_{N}^{*}$ of the Thue--Morse--Kronecker sequence is essentially of the same order as the discrepancy of the pure Kronecker sequence.
 \item If the order of the discrepancy $D_{N}^{*}$ of the pure Kronecker sequence satisfies $N D_{N}^{*} = \mathcal{O} \left(N^{\frac{\log 3}{\log 4}}\right)$ then $\widetilde{D}_{N}^{*}$ satisfies $N \widetilde{D}_{N}^{*} = \mathcal{O} \left(N^{\frac{\log 3}{\log 4} + \ve}\right)$.
\end{itemize}
~\\

Thus an ``interesting non-trivial'' example means for us an example where $\alpha$ is a ``natural'' real number such as $\sqrt{2}, e, \pi$ (it seems to us that there is no chance to handle these numbers since we do not have enough information on their digit representation), or where $\alpha$ is such that $\widetilde{D}_{N}^{*}$ and hence $D_{N}^{*}$ is small (say $N \widetilde{D}_{N}^{*} = \mathcal{O} \left(N^{\ve}\right)$ -- however we cannot give such examples) or where the quality of the distribution of the sequences $\left(\left\{n \alpha\right\}\right)_{n \geq 1}$ and $\left(\left\{n_{k} \alpha\right\}\right)_{k \geq 1}$ differ strongly. Two such examples are given in Theorem~\ref{th_thue4}. Especially in the first example the difference between $D_{N}^{*}$ and $\widetilde{D}_{N}^{*}$ is of the maximal possible form.\\

\begin{theorem} \label{th_thue4}
~\
\begin{enumerate}
\item [a)] Let $\alpha = \frac{2}{3} + \sum^{\infty}_{k=1} \frac{1}{4^{2^{k}}}$. Then for the star-discrepancy $D^{*}_{N}$ of the pure Kronecker sequence $\left(\left\{n \alpha\right\}\right)_{n \geq 1}$ we have
$$N D^{*}_{N} = \mathcal{O} \left(\log N\right),$$
whereas for the star-discrepancy $\widetilde{D}_{N}^{*}$ of the Thue--Morse--Kronecker sequence $\left(\left\{n_{k} \alpha\right\}\right)_{k \geq 1}$ we have
$$N \widetilde{D}_{N}^{*}= \mathcal{O} \left(N^{\frac{\log 3}{\log 4}+ \ve}\right) \mbox{and}$$
$$N \widetilde{D}_{N}^{*}= \Omega \left(N^{\frac{\log 3}{\log 4} - \ve}\right)$$
for every $\ve > 0.$
\item [b)] Let $\gamma=0.1001011001101001\ldots$ the Thue--Morse real in base 2. Then for the star-discrepancy $D^{*}_{N}$ of the pure Kronecker sequence $\left(\left\{n \gamma\right\}\right)_{n \geq 1}$ we have
$$N D_{N}^{*} = \mathcal{O} \left(N^{\ve}\right) \quad \mbox{for all}~ \ve > 0,$$
whereas for the star-discrepancy $\widetilde{D}_{N}^{*}$ of the Thue--Morse--Kronecker sequence $\left(\left\{n_{k} \gamma\right\}\right)_{k \geq 1}$ we have
$$N \widetilde{D}_{N}^{*} = \Omega \left(N^{0.6178775}\right).$$
\end{enumerate}
\end{theorem}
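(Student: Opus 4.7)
The plan is to dispatch parts (a) and (b) separately. In each case the pure Kronecker bound is the easier half; the Thue--Morse--Kronecker bound reduces, via \eqref{BL1} together with the classical Thue--Morse identity
\[
\sum_{n=0}^{2^{L}-1}(-1)^{t_{n}}e^{2\pi i n\alpha} \;=\; \prod_{\ell=0}^{L-1}\bigl(1-e^{2\pi i\,2^{\ell}\alpha}\bigr),
\]
to a direct estimate of the lacunary product $P_{L}(\alpha) := \prod_{\ell=0}^{L-1}|2\sin\pi 2^{\ell}\alpha|$ at $N = 2^{L-1}$ terms. For part~(a), the first step is to show that the partial quotients of $\alpha = \tfrac{2}{3} + \sum_{k\geq 1} 4^{-2^{k}}$ are bounded: the truncations $\alpha_{K} = \tfrac{2}{3}+\sum_{k=1}^{K}4^{-2^{k}}$ are rationals with denominator $q_{K} = 3\cdot 4^{2^{K}}$ and error $|\alpha-\alpha_{K}| \leq 18\,q_{K}^{-2}$, which matches the quality of convergents without producing any Liouville-type overshoot; a careful unfolding of the continued fraction past the convergent $\tfrac{2}{3}$, exploiting the specific base-$4$ digit pattern of $\alpha$, shows that no subsequent partial quotient is large. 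The classical formula $ND_{N}^{*} = \mathcal{O}(\sum_{i\leq m(N)}a_{i})$ then delivers $ND_{N}^{*} = \mathcal{O}(\log N)$, and the third bullet of the dichotomy in Section~\ref{sect_int} upgrades this (since $\mathcal{O}(\log N)\subseteq \mathcal{O}(N^{\log 3/\log 4})$) to $N\widetilde D_{N}^{*} = \mathcal{O}(N^{\log 3/\log 4+\ve})$.

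The matching lower bound in (a) reduces to bounding $P_{L}(\alpha)$ from below. Here $2^{\ell}\cdot\tfrac{2}{3} \bmod 1$ cycles through $\{\tfrac{1}{3},\tfrac{2}{3}\}$, giving a factor of exactly $\sqrt{3}$ at each $\ell$; the perturbation $2^{\ell}(\alpha-\tfrac{2}{3}) \bmod 1 = \sum_{k:\,2^{k+1}>\ell}2^{\ell-2^{k+1}}$ is sizeable only when $\ell$ lies within $O(\log L)$ of a power of two from below, yielding at most $O(\log^{2}L)$ ``anomaly'' indices, each modifying its factor by a bounded multiplicative constant. Hence $P_{L}(\alpha) \geq 3^{L/2}\exp(-O(\log^{2}L)) = N^{\log 3/\log 4 - o(1)}$, and combining with \eqref{BL1} and the Thue--Morse identity gives $N\widetilde D_{N}^{*} = \Omega(N^{\log 3/\log 4 - \ve})$.

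For part~(b), the bound $ND_{N}^{*} = \mathcal{O}(N^{\ve})$ for the Kronecker sequence with the Thue--Morse real $\gamma$ follows from known sub-polynomial bounds on the rational approximation of automatic irrationals (Adamczewski--Bugeaud, Bugeaud), which give $\mu(\gamma) = 2$ and hence $ND_{N}^{*} = \mathcal{O}(\sum_{i\leq m(N)}a_{i}) = \mathcal{O}(N^{\ve})$. For the lower bound on $\widetilde D_{N}^{*}$, the shifts $\{2^{\ell}\gamma\} = 0.t_{\ell}t_{\ell+1}\cdots_{2}$ inherit the Thue--Morse substitution structure $0\to 01,\,1\to 10$, and grouping the factors of $P_{L}(\gamma)$ in dyadic blocks yields a self-similar recursion on two coupled sub-products whose joint asymptotic growth rate $\theta>1$ satisfies $\log_{2}\theta = 0.6178775\ldots$, computable to any required precision by rigorous truncated evaluation; \eqref{BL1} and the Thue--Morse identity then deliver the stated $\Omega(N^{0.6178775})$ bound. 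The trickiest steps will be the bounded--partial--quotient verification in (a) (since $\alpha$ superficially resembles a Liouville number) and the rigorous numerical pinning-down of $\theta$ in (b); both rest on the fine interplay between the prescribed digit pattern of the parameter and the action of the doubling map.
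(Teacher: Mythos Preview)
Your overall architecture is right: reduce to estimating $P_L(\alpha)=\prod_{\ell}|2\sin\pi 2^\ell\alpha|$ via \eqref{BL1}/\eqref{lacproductsLB}, handle the cosine product by \eqref{BL3}, and pull the upper bound in (a) from the general dichotomy \eqref{BL2}. But two of the steps you flag as ``trickiest'' are not merely tricky in your plan---they are gaps.

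\textbf{Part (a), bounded partial quotients.} Your truncation observation $|\alpha-\alpha_K|\le 18\,q_K^{-2}$ goes in the wrong direction: it exhibits \emph{good} rational approximations, whereas bounded partial quotients require a \emph{lower} bound $|\alpha-p/q|\gg q^{-2}$ for \emph{all} $p,q$. The promised ``careful unfolding of the continued fraction past $2/3$'' is not an argument. The clean route (and the one the paper takes) is to quote Shallit's theorem that $\beta=\sum_{k\ge1}4^{-2^k}$ has bounded partial quotients, and then observe that if $|q\beta-p|>c/q$ for all $p,q$, then $|3q\beta+2q-3p|>c/(3q)$, i.e.\ $|\alpha-p/q|>c/(9q^2)$, so $\alpha=\beta+2/3$ inherits bounded partial quotients.

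\textbf{Part (a), lower bound for $P_L(\alpha)$.} Your perturbation idea is the right one, and matches the paper's: factor out $(\sqrt3)^{L}$ and control the deviation caused by $\delta_\ell=\{2^\ell\beta\}$. But ``each anomaly factor modified by a bounded multiplicative constant'' needs justification: you must rule out that some $\{2^\ell\alpha\}$ lands near $0$ or $1$. The paper does this by listing the ten possible $4$-digit binary prefixes of $\{2^\ell\alpha\}$ and deducing $\|2^\ell\alpha\|>1/16$ uniformly. With that in hand, the paper uses $\max(0.2,\,1-5\delta_\ell)>e^{-11\delta_\ell}$ together with $\sum_\ell\delta_\ell=\mathcal{O}(\log L)$ to get $P_L(\alpha)\gg L^{-22}\,3^{L/2}$, which is slightly sharper than your $\exp(-O(\log^2 L))$ but equivalent for the stated $\Omega(N^{\log3/\log4-\ve})$.

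\textbf{Part (b), lower bound for $P_L(\gamma)$.} Here your plan does not work as stated. There is no evident ``self-similar recursion on two coupled sub-products'' obtained by dyadic grouping: the substitution $0\mapsto01,\ 1\mapsto10$ acts on the symbolic sequence $(t_n)$, but the \emph{real} values $\{2^\ell\gamma\}$ do not satisfy any simple two-term recursion, so there is no matrix whose spectral radius would hand you $\theta$. What actually produces the exponent is an ergodic-average computation: $\frac1L\log P_L(\gamma)=\frac1L\sum_\ell\log|2\sin\pi\{2^\ell\gamma\}|$ converges, and one evaluates the limit by exploiting the known \emph{block frequencies} of the Thue--Morse word. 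Concretely, the paper groups indices into length-$8$ blocks; each $8$-block of $(t_n)$ is $A=10010110$ or $B=01101001$, and exactly ten ordered quadruples of consecutive $8$-blocks occur, with frequencies $1/12$ (eight of them) and $1/6$ (two of them). For each quadruple $c_i$ one bounds $\prod_{\ell=8j}^{8j+7}|2\sin\pi 2^\ell\gamma|$ above and below by explicit constants $U(c_i),D(c_i)$ (using that the first $32$ binary digits of $\{2^{8j}\gamma\}$ are determined by $c_i$), and the weighted geometric mean $\prod_i D(c_i)^{F(c_i)}$ then gives the exponent $0.6178775$ rigorously. Your plan should be rewritten along these lines; the ``recursion'' idea as you describe it has no clear content.
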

~\\

We would like to point out here that there is an intimate connection between distribution properties of $\left(\left\{n_{k} \alpha\right\}\right)_{k \geq 1}$ and of certain types of hybrid sequences. For some information on the analysis of hybrid sequences see for example~\cite{hokr},~\cite{hola} and~\cite{larch}.\\

As already mentioned, the proofs of Theorems~\ref{th_thue} and~\ref{th_thue2} are based on a connection between exponential sums of $(n_k \alpha)_{k \geq 1}$ and the lacunary trigonometric products studied in Theorem~\ref{th_theorem2}. We will establish this connection in the following lines, and exploit it in Section~\ref{sect_lac} in more detail. For the time being, we assume that $N$ is of the form $2^L$ for some positive integer $L$.\\

To analyze the exponential sums appearing in Theorem~\ref{th_thue} and on the right-hand side of~\eqref{erdtur}, we define
$$
S_h(N) = \sum_{k=1}^N e^{2 \pi i h n_k \alpha}.
$$
By the assumption that $N=2^L$ we have
\begin{eqnarray}
S_h(N) & = & \sum_{n=0}^{2N-1} \frac{1}{2} \sum_{\delta \in \{0,1\}} \exp \left(2 \pi i \delta \frac{s_2(n)}{2} \right) \exp \left(2 \pi i h n \alpha\right) \nonumber\\
& = & \frac{1}{2} \sum_{\delta \in \{0,1\}} ~\sum_{(\eta_0, \dots, \eta_{L}) \in \{0,1\}^{L+1}} \exp \left( 2 \pi i \sum_{\ell=0}^{L} \left(\delta \frac{\eta_\ell}{2} + \eta_\ell h 2^\ell \alpha \right) \right) \nonumber\\
& = & \frac{1}{2} \sum_{\delta \in \{0,1\}} ~\prod_{\ell=0}^{L} ~\left(\sum_{\eta_\ell \in \{0,1\}} \exp \left( 2 \pi i \eta_\ell \left( \frac{\delta}{2} + h 2^\ell \alpha \right)\right)\right), \nonumber
\end{eqnarray}
which yields
\begin{equation}
|S_h(N) |  \leq  \frac{1}{2} \prod_{\ell=0}^{L} |2 \sin \pi h 2^\ell \alpha| + \frac{1}{2} \prod_{\ell=0}^{L}| 2 \cos \pi h 2^\ell \alpha| \label{lacproducts}
\end{equation}
and
\begin{equation}
|S_h(N) | \geq   \left|\frac{1}{2} \prod_{\ell=0}^{L} |2 \sin \pi h 2^\ell \alpha| - \frac{1}{2} \prod_{\ell=0}^{L}| 2 \cos \pi h 2^\ell \alpha|\right|.\label{lacproductsLB}
\end{equation}

A similar analysis for the sequence $(m_k)_{k \geq 1}$ shows that
\begin{equation}
\left|\sum_{k=1}^N e^{2 \pi i h m_k \alpha}\right| \leq  \frac{1}{2} \prod_{\ell=0}^{L} |2 \sin \pi h 2^\ell \alpha| + \frac{1}{2} \prod_{\ell=0}^{L} |2 \cos \pi h 2^\ell \alpha|, \label{lacproducts2}
\end{equation}
where again we assume that $N=2^L$.\\

By taking logarithms, we can convert the trigonometric products appearing in~\eqref{lacproducts} and~\eqref{lacproducts2} into so-called \emph{lacunary sums}; these sums have been intensively investigated in Fourier analysis, and a wide range of mathematical methods is available for studying them (see the following Section~\ref{sect_lac}). Thus the theory of lacunary sums allows us to obtain an estimate for the size of the exponential sums $S_{h} (N)$ in the case when $N$ is a power of 2; however, it will turn out that we may also drop the condition that $N$ is an integral power of $2$ by applying a dyadic decomposition method.\\

Note that by~\eqref{khin} and by the fact that the Thue--Morse integers have asymptotic density 1/2 it is easy to show that all the conclusions of Theorem~\ref{th_thue} and Theorem~\ref{th_thue2} remain valid if we replace the sequence $(n_k)_{k \geq 1}$ by the sequence $(m_k)_{k \geq 1}$ (of those numbers which are \emph{not} Thue--Morse integers).\\

The outline of the remaining part of this paper is as follows. In Section~\ref{sect_lac} we explain the main principles of the theory of lacunary (trigonometric) sums, and state several lemmas as well as Theorem~\ref{lac_co1}, which we require for the proofs of Theorem~\ref{th_thue} and Theorem~\ref{th_theorem2}. In Section~\ref{sect_lac2} we give the proofs for the results stated in Section~\ref{sect_lac}, and in Section~\ref{sect_proof} we give the proofs of Theorem~\ref{th_thue} and Theorem~\ref{th_theorem2}. In Section~\ref{sect_proof_th2}, we prove Theorem~\ref{th_thue2}, and in Section~\ref{sect_proof_th4} we prove Theorem~\ref{th_thue4}. Finally, in Section~\ref{sec_conc}, we briefly mention a problem from metric Diophantine approximation, which was posed by LeVeque in~\cite{LeVe} and is related to the proof of Theorem~\ref{th_thue2}.

\section{Probabilistic results for lacunary trigonometric products} \label{sect_lac}

It is a well-known fact that so-called \emph{lacunary} systems of trigonometric
functions, that is, systems of the form $(\cos 2 \pi s_\ell \alpha)_{\ell \geq 1}$ or
$(\sin 2 \pi s_\ell \alpha)_{\ell \geq 1}$ for rapidly increasing $(s_\ell)_{\ell \geq 1}$,
exhibit properties which are typical for sequences of independent random variables. This similarity includes the central limit theorem, the law of the iterated logarithm, and Kolmogorov's ``Three series'' convergence theorem. The situation is particularly well understood when $(s_\ell)_{\ell \geq 1}$ satisfies the \emph{Hadamard gap condition} 
\begin{equation} \label{had}
\frac{s_{\ell+1}}{s_\ell} \geq q > 1, \qquad \ell \geq 1.
\end{equation}
To a certain degree this almost-independence property extends to systems $(f(s_\ell \alpha))_{\ell \geq 1}$ for a function $f$ which is periodic with period one and satisfies certain regularity properties; however, in this case the number-theoretic properties of $(s_\ell)_{\ell \geq 1}$ play an important role, and the almost-independent behavior generally fails when~\eqref{had} is relaxed to a weaker growth condition. The case which has been investigated in the greatest detail is that when $f$ has bounded variation on $[0,1]$, since this case is (by Koksma's inequality) closely connected to the discrepancy of the sequence of fractional parts $(\{s_\ell \alpha\})_{\ell \geq 1}$, which in turn can be interpreted as the (one-sided) Kolmogorov--Smirnov statistic adopted to the case of the uniform measure on $[0,1]$.\\

To estimate the trigonometric products appearing in~\eqref{lacproducts} and~\eqref{lacproducts2} we will use the equalities
\begin{equation} \label{prod_lac_1}
\prod_{\ell=0}^{L-1} \left| 2 \sin \pi h 2^\ell \alpha \right| = \exp \left( \sum_{\ell=0}^{L-1} \log \left| 2 \sin \pi h 2^\ell \alpha \right| \right)
\end{equation}
and
\begin{equation} \label{prod_lac_2}
\prod_{\ell=0}^{L-1} \left| 2 \cos \pi h 2^\ell \alpha \right| = \exp \left( \sum_{\ell=0}^{L-1} \log \left| 2 \cos \pi h 2^\ell \alpha \right| \right),
\end{equation}
respectively, to transform the problem of lacunary trigonometric products into a problem concerning lacunary sums. However, the functions 
\begin{equation} \label{f_1}
f_1 (\alpha) := \log \left| 2 \sin \pi \alpha \right|
\end{equation}
and 
\begin{equation} \label{f_2}
f_2 (\alpha) := \log \left| 2 \cos \pi \alpha \right|
\end{equation}
do \emph{not} have bounded variation in $[0,1]$ (see the figures below). Consequently, the known results are not applicable in this situation, and we have to adopt the proof techniques in such a way that they can handle this kind of problem.\footnote{There exist a few results concerning lacunary series when $f$ is neither required to have bounded variation, nor to be Lipschitz- or H\"older-continuous, nor to have a modulus of continuity of a certain regularity; see for example~\cite{matsu}. However, for these results the growth requirements for $(s_\ell)_{\ell \geq 1}$ are much stronger than~\eqref{had}, which means that they are not applicable in our case, since by~\eqref{prod_lac_1} and~\eqref{prod_lac_2} we have to deal with lacunary sequences growing exactly with the speed presumed in~\eqref{had}, and 
not faster.}\\

\begin{minipage}{\textwidth}
\begin{center}
\includegraphics[angle=0,width=70mm]{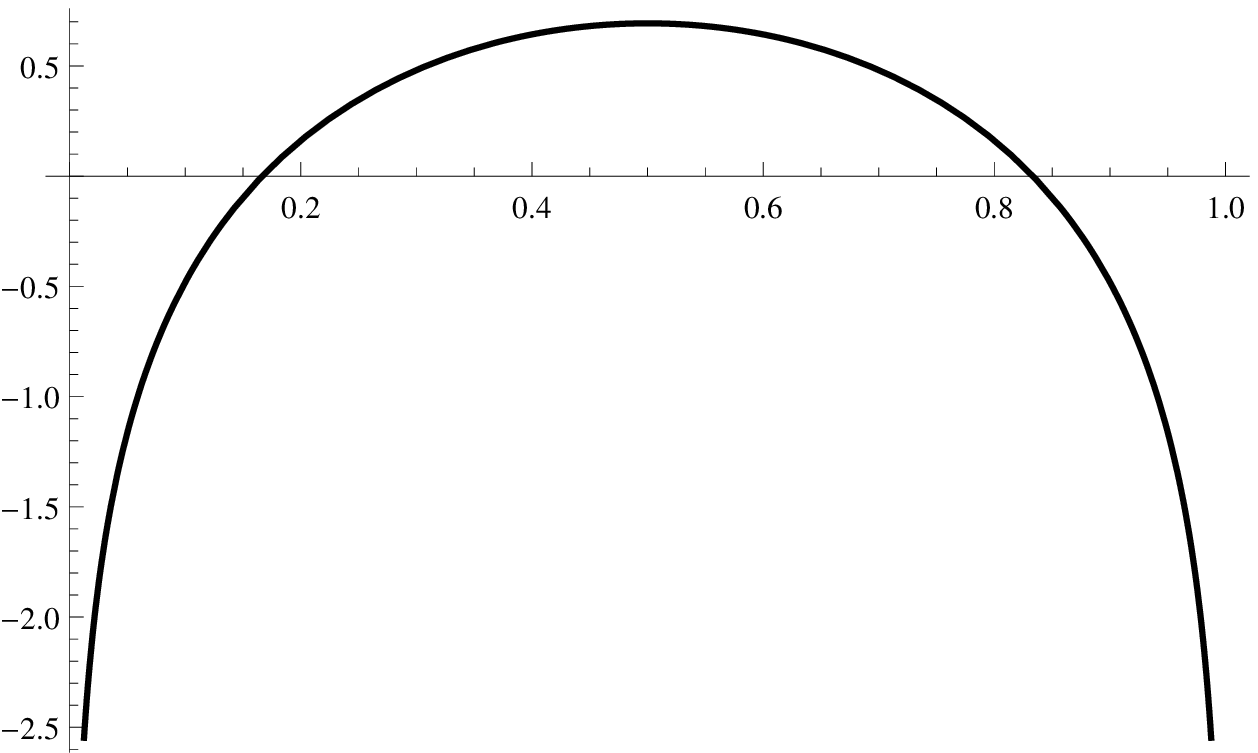} \hspace{1cm} \includegraphics[angle=0,width=70mm]{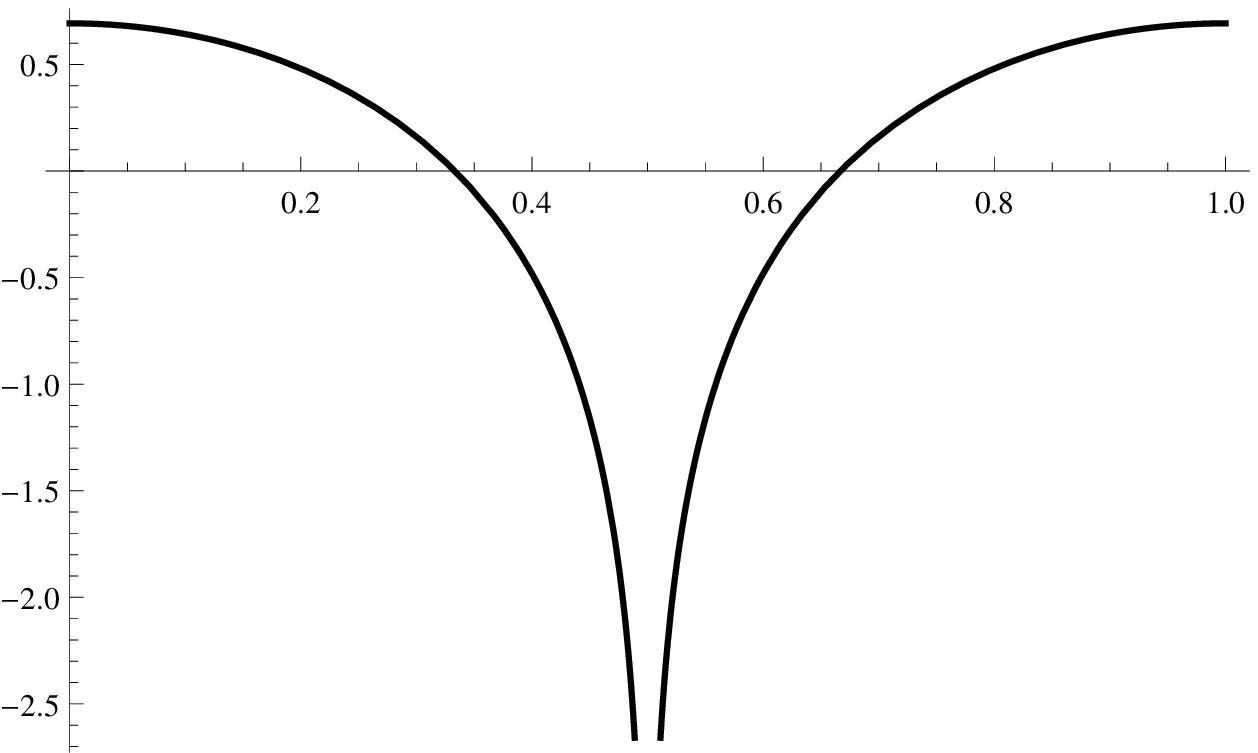}\\
Figure 1: the functions $f_1$ (left) and $f_2$ (right).
\end{center}
\end{minipage} \vspace{.3cm}

In the following we will assume that $f$ is a measurable real function satisfying
\begin{equation} \label{f}
f(\alpha+1)=f(\alpha), \qquad \int_0^1 f(\alpha) ~d\alpha = 0, \qquad \int_0^1 f(\alpha)^2  ~d\alpha < \infty,
\end{equation}
and that $(s_\ell)_{\ell \geq 1}$ is a sequence satisfying~\eqref{had}. We write 
$$
f(\alpha) \sim \sum_{j=1}^\infty a_j \cos 2 \pi j \alpha + b_j \sin 2 \pi j \alpha
$$
for the Fourier series of $f$, and we will assume that the Fourier coefficients of $f$ satisfy
\begin{equation} \label{f_coeff_b}
|a_j| \leq \frac{1}{j}, \qquad |b_j| \leq \frac{1}{j}, \qquad \textrm{for $j \geq 1$}.
\end{equation}
The inequalities in line~\eqref{f_coeff_b} appear frequently in the theory of lacunary series, since the upper bound stated there describes precisely (up to multiplication with a constant) the maximal asymptotic order of the Fourier coefficients of a function of bounded variation (see for example~\cite[p. 48]{zyg}). However, even if the function is \emph{not} of bounded variation the estimates in~\eqref{f_coeff_b} may still be true; this can be seen by the fact that for the (unbounded) functions $f_1$ and $f_2$ from lines~\eqref{f_1} and~\eqref{f_2}, respectively, we have
$$
f_1 (\alpha) \sim \sum_{j=1}^{\infty} \frac{-1}{j} \cos 2 \pi j \alpha 
$$
and
$$
f_2 (\alpha) \sim \sum_{j=1}^{\infty} \frac{(-1)^{j+1}}{j} \cos 2 \pi j \alpha.
$$
By the way, we note that both $f_1$ and $f_2$ are even functions, and that both of them satisfy~\eqref{f}.\\

Throughout the remaining part of this paper, we will write $\p$ for the Lebesgue measure on the unit interval. Note that the unit interval, equipped with Borel sets and Lebesgue measure, is a probability space, and that accordingly every measurable function on $[0,1]$ can be seen as a random variable over this probability space. We also write $\|\cdot \|_2$ for the $L^2 (0,1)$ norm and $\| \cdot \|_\infty$ for the supremum norm of a function, respectively.\\

The main technical tool in this section is the following exponential inequality (Lemma~\ref{lac_lemma1}). Together with the subsequent lemmas it will allow us to give an upper bound for the measure of those $\alpha$ for which~\eqref{prod_lac_1} and~\eqref{prod_lac_2} are large (stated in Lemma~\ref{lac_lemma3}). We state Lemmas~\ref{lac_lemma1}--\ref{lac_lemma3} in a slightly more general form than necessary for the proofs of Theorem~\ref{th_thue} and~\ref{th_theorem2}, and we will use them to prove an additional new theorem, namely Theorem~\ref{lac_co1} below.

\begin{lemma} \label{lac_lemma1}
Assume that $f$ is an even measurable function satisfying~\eqref{f}, whose Fourier coefficients satisfy~\eqref{f_coeff_b}. Furthermore, let $(s_\ell)_{\ell \geq 1}$ be a sequence of positive integers satisfying~\eqref{had} for some number $q > 1$. Then there exists a number $L_0 = L_0(q)$, such that the following holds. Let $L \geq L_0$ be given, and write $p(\alpha)$ for the $L^{8}$-th partial sum of the Fourier series of $f$. Then for all 
\begin{equation} \label{lambda}
\lambda \in \left[0, L^{-1/9} \right]
\end{equation}
we have
$$
\int_0^1 \exp\left(\lambda \sum_{\ell=1}^{L} p(s_\ell \alpha) \right) d\alpha \leq \exp \left( \frac{2 \lambda^2 \pi^2 L}{3} \right)
$$
and
$$
\int_0^1 \exp\left(\lambda \sum_{\ell=1}^{L} p(s_\ell \alpha) \right) d\alpha \leq \exp \left(12 \lambda^2 \frac{\sqrt{q}}{\sqrt{q}-1} \|p\|_2^{1/2} L \right).
$$
The same two conclusions hold if $f$ is an odd function instead of an even function.
\end{lemma}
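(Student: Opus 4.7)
The plan is to factor $\exp\bigl(\lambda \sum_{\ell=1}^L p(s_\ell \alpha)\bigr) = \prod_{\ell=1}^L \exp(\lambda p(s_\ell \alpha))$, majorize each factor by its quadratic Taylor polynomial, expand the resulting finite product, integrate term by term, and then use the Hadamard gap condition to argue that only the ``diagonal'' contributions really matter.

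First I would collect the basic size estimates for $p$. From $|a_j|, |b_j| \leq 1/j$ and Parseval's identity one has $\|p\|_2^2 \leq \tfrac12 \sum_{j=1}^{L^8}(a_j^2 + b_j^2) \leq \pi^2/6$, and the Dirichlet-kernel bound gives $\|p\|_\infty = O(\log L)$. Together with $\lambda \leq L^{-1/9}$ this forces $\lambda \|p\|_\infty < 1$ uniformly, provided $L \geq L_0(q)$. Since $e^x \leq 1 + x + x^2$ on $[-1,1]$, one obtains the pointwise majorant
\[
\exp\!\Bigl(\lambda \sum_{\ell=1}^L p(s_\ell \alpha)\Bigr) \;\leq\; \prod_{\ell=1}^L \bigl(1 + \lambda p(s_\ell \alpha) + \lambda^2 p(s_\ell \alpha)^2\bigr).
\]
Expanding the right-hand side as a sum over pairs of disjoint subsets $A, B \subseteq \{1,\ldots,L\}$ (with $\ell \in A$ selecting $\lambda p(s_\ell \alpha)$, $\ell \in B$ selecting $\lambda^2 p(s_\ell \alpha)^2$, and the remaining indices contributing $1$) and integrating term by term, the ``diagonal'' piece corresponding to $A = \emptyset$ collapses, after using $\int_0^1 p(s_\ell \alpha)^2\, d\alpha = \|p\|_2^2$, to $(1 + \lambda^2 \|p\|_2^2)^L \leq \exp(\lambda^2 L \|p\|_2^2)$. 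Inserting $\|p\|_2^2 \leq \pi^2/3$ this almost delivers the first claimed bound, with a factor of roughly two reserved for the off-diagonal remainder.

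The main obstacle is the control of the off-diagonal terms ($A \neq \emptyset$). Substituting the Fourier expansion of $p(s_\ell \alpha)$ and, via a product-to-sum identity, that of $p(s_\ell \alpha)^2$, each such term becomes a linear combination of integrals $\int_0^1 \prod_i \cos(2\pi n_i \alpha + \varphi_i)\, d\alpha$ whose frequencies $n_i$ lie in $s_{\ell_i}\{1,\ldots,2L^8\}$ for indices $\ell_i \in A \cup B$. Such an integral vanishes unless the signed sum of the $n_i$ is zero; by choosing $L_0(q)$ large enough that $q^{L_0} > L_0^{16}$, the Hadamard gap condition restricts any surviving resonance to couple indices drawn from a narrow window of consecutive $\ell$'s, and the contribution of such windows is dominated by a geometric series in the gap ratio whose sum $\sum_{\ell \geq 1} q^{-\ell/2} = \sqrt{q}/(\sqrt{q}-1)$ accounts precisely for the pre-factor in the second claimed inequality. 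The evenness (respectively oddness) of $f$ is used only to keep $p$ supported on cosines (respectively sines), which streamlines the combinatorics of the resonance analysis; the two cases are treated symmetrically. Combining the diagonal and off-diagonal estimates and using $1+x \leq e^x$ then yields both inequalities of the lemma.
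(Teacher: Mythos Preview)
Your approach contains a genuine gap at the ``diagonal'' step. You claim that the contribution with $A=\emptyset$ collapses to $(1+\lambda^2\|p\|_2^2)^L$, but this would require
\[
\int_0^1 \prod_{\ell\in B} p(s_\ell\alpha)^2\,d\alpha \;=\; \prod_{\ell\in B}\|p\|_2^2
\]
for every $B$, i.e.\ that the random variables $p(s_\ell\,\cdot\,)^2$ behave as if independent. They do not: writing $p(s_\ell\alpha)^2=\|p\|_2^2+Q_\ell(\alpha)$ with $\int Q_\ell=0$, the cross-terms $\int\prod_{\ell\in C}Q_\ell$ are in general nonzero, and there is no reason for them to be small enough to absorb into the stated bound. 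Indeed, since $\|p\|_\infty\asymp\log L$ while $\|p\|_2$ is bounded, already $\int p(s_{\ell_1}\alpha)^2 p(s_{\ell_2}\alpha)^2\,d\alpha$ can exceed $\|p\|_2^4$ by a factor polylogarithmic in $L$, and you have up to $L$ such factors multiplied together. The same difficulty infects the ``off-diagonal'' part: once $|A\cup B|$ is allowed to be as large as $L$, the resonance combinatorics you sketch (windows of consecutive $\ell$'s, a single geometric series in $q^{-1/2}$) does not control the integral of a product of $L$ trigonometric polynomials.

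This is exactly the obstacle the paper's proof is engineered to avoid. Instead of applying $e^x\le 1+x+x^2$ to each of the $L$ factors, the paper first partitions $\{1,\dots,L\}$ into blocks $\Delta_i$ of length $\lceil\log_q(4L^8)\rceil$, separates even- and odd-indexed blocks via Cauchy--Schwarz, and only then applies $e^x\le 1+x+x^2$ to the \emph{block sums} $2\lambda U_i$. The block length is chosen so that the frequency supports of $U_i$ and $U_i^2$ (after removing a low-frequency piece $W_i$) for distinct even $i$ are disjoint by at least a factor $2$; this gives genuine orthogonality $\int X_{i_1}\cdots X_{i_v}=0$, so the product over even blocks integrates to $\prod_i(1+4\lambda^2 W_i)$. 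The whole analysis then reduces to bounding a single scalar $W_i$ per block, which is where the constants $\tfrac{\pi^2}{3}\cdot\tfrac{q}{q-1}$ and $6\|p\|_2^{1/2}\cdot\tfrac{\sqrt q}{\sqrt q-1}$ arise. Without this block-and-split device (or an equivalent decoupling mechanism), your term-by-term expansion does not close.
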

We emphasize the fact that the number $L_0$ in the statement of Lemma~\ref{lac_lemma1} depends \emph{only} on the growth parameter $q$; it does not depend on the function $f$ or the sequence $(s_\ell)_{\ell \geq 1}$. The same will be true for the numbers $L_0(q)$ in Lemmas~\ref{lac_lemma1a} and~\ref{lac_lemma3} below.\\

From Lemma~\ref{lac_lemma1} we will deduce the following Lemma~\ref{lac_lemma1a}, which is a large deviations bound for the maximal partial sum of a lacunary sums.

\begin{lemma} \label{lac_lemma1a}
Let $f$ and $(s_\ell)_{\ell \geq 1}$ be as in Lemma~\ref{lac_lemma1}. Then there exists a number $L_0 = L_0(q)$, such that the following holds. Let $L \geq L_0$ be given, and assume that $L$ is an integral power of 2. Write $p(\alpha)$ for the $L^{8}$-th partial sum of the Fourier series of $f$. Then we have
\begin{eqnarray*}
\p \left( \alpha \in (0,1):~\max_{1 \leq M \leq L} \left| \sum_{\ell=1}^M p(s_\ell \alpha) \right| > \frac{29 q}{q-1} \sqrt{L \log \log L} \right) & \leq & \frac{48}{(\log L)^{1.4}}
\end{eqnarray*}
and, under the additional assumption that $\|p\|_2^{1/4} \geq L^{-1/100}$, we also have
\begin{eqnarray*}
\p \left( \alpha \in (0,1):~\max_{1 \leq M \leq L} \left| \sum_{\ell=1}^M p(s_\ell \alpha) \right| > 43 \|p\|_2^{1/4} \frac{\sqrt{q}}{\sqrt{q}-1} \sqrt{L} \sqrt{\log \log L} + \sqrt{L} \right) \leq \frac{45}{(\log L)^2}.
\end{eqnarray*}
The same two conclusions hold if $f$ is an odd function instead of an even function.
\end{lemma}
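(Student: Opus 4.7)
The plan is to deduce this maximal inequality from Lemma~\ref{lac_lemma1} by combining an exponential Chernoff bound with a dyadic decomposition. First I would convert each of the two exponential-moment estimates of Lemma~\ref{lac_lemma1} into a tail bound for a lacunary sum of arbitrary length: by Markov's inequality, replacing $f$ with $-f$ to control the opposite tail (which preserves all hypotheses), and optimizing over $\lambda\in[0,L^{-1/9}]$ one obtains
\[
\p\!\left(\left|\textstyle\sum_{\ell=a+1}^{a+M}p(s_\ell\alpha)\right|>T\right)\le 2\exp\!\left(-\tfrac{3T^2}{8\pi^2 M}\right)
\]
for every $1\le M\le L$ and $a\ge 0$, with an analogous estimate whose variance proxy is proportional to $\sqrt{q}(\sqrt{q}-1)^{-1}\|p\|_2^{1/2}M$ coming from the second moment bound. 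These apply to every contiguous sub-sum because a shifted segment of a Hadamard lacunary sequence is itself Hadamard lacunary with the same ratio $q$, and $p$ is unchanged; moreover the optimum $\lambda^\ast=3T/(4\pi^2 M)$ stays inside $[0,L^{-1/9}]$ throughout the range of parameters we need.

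Writing $L=2^k$, every $M\in\{1,\dots,L\}$ has a binary expansion which exhibits $\{1,\dots,M\}$ as a disjoint union of at most $k$ dyadic intervals $I_{j,i}=\{i\cdot 2^j+1,\dots,(i+1)2^j\}$ with distinct $j$'s. Setting $T_{j,i}=\sum_{\ell\in I_{j,i}}p(s_\ell\alpha)$ therefore yields the deterministic inequality
\[
\max_{1\le M\le L}\Bigl|\textstyle\sum_{\ell=1}^{M}p(s_\ell\alpha)\Bigr|\le\sum_{j=0}^{k-1}\max_{0\le i<2^{k-j}}|T_{j,i}|.
\]
I would then distribute the target threshold via $T_j=A\sqrt{(k-j+1)\,2^j\log\log L}$ for a constant $A=A(q)$; the Chernoff estimate applied at each block, together with a union bound over the $2^{k-j}$ blocks at level $j$, gives $\p(\max_i|T_{j,i}|>T_j)\le 2^{k-j+1}(\log L)^{-\beta(k-j+1)}$ with $\beta=3A^2/(8\pi^2)$. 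Summing the resulting geometric series in $m=k-j+1$ is bounded by $4(\log L)^{-\beta}$ as soon as $\beta>1$, while $\sum_{j=0}^{k-1}T_j=A\,C_0\,\sqrt{L\log\log L}$ with $C_0=\sum_{m\ge 1}\sqrt{m}\,2^{-m/2}<\infty$. Tuning $A=A(q)$ so that $\beta\ge 1.4$ and simultaneously $A\,C_0\le 29q/(q-1)$ (possible because the factor $q/(q-1)\ge 1$ on the right comfortably dominates the purely numerical constraint on $A$) establishes the first conclusion. The second conclusion follows by running the same scheme starting from the second exponential-moment bound of Lemma~\ref{lac_lemma1}; the factor $\|p\|_2^{1/2}$ there propagates into the factor $\|p\|_2^{1/4}$ in the final threshold, and one now arranges $\beta\ge 2$.

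Two minor technicalities remain. For the smallest values of $j$ (those with $2^j\ll L^{0.24}$, a range of length $O(\log L)$) the Chernoff optimum $\lambda^\ast$ may exceed $L^{-1/9}$; there I would replace the Chernoff step by the trivial bound $|T_{j,i}|\le 2^j\|p\|_\infty\le 2^j\cdot 16\log L$, whose total contribution to $\max_M|S_M|$ is $O(L^{0.24}\log L)=o(\sqrt{L})$ and is comfortably absorbed by the leading $\sqrt{L\log\log L}$ term in the first conclusion and by the additive $\sqrt{L}$ in the second. The hypothesis $\|p\|_2^{1/4}\ge L^{-1/100}$ is used precisely to keep this low-$j$ contribution below $\sqrt{L}$ in the second conclusion, where the variance proxy itself depends on $\|p\|_2$. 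The main obstacle throughout is the arithmetic bookkeeping required to land the stated constants ($29q/(q-1)$ with decay $(\log L)^{-1.4}$, and respectively $43\sqrt{q}/(\sqrt{q}-1)$ with decay $(\log L)^{-2}$) in one pass while keeping $\lambda^\ast$ inside $[0,L^{-1/9}]$ at every level; beyond that, no new idea beyond Chernoff plus dyadic union bound is needed.
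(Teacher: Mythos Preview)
Your approach is essentially the paper's: dyadic decomposition of $\{1,\dots,M\}$ into blocks, Chernoff-type bounds derived from Lemma~\ref{lac_lemma1} for the large blocks via Markov's inequality (with $-f$ for the opposite tail), union bound across blocks at each level, and a geometrically summable threshold allocation.

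The one substantive difference is in the treatment of the small blocks. The paper stops the dyadic decomposition at blocks of size $2^{\lceil \nu/4\rceil}$ and controls the residual piece by combining the Carleson--Hunt maximal inequality with a sixth-moment estimate for lacunary trigonometric sums (the latter reduced to counting solutions of $s_{\ell_1}\pm\cdots\pm s_{\ell_6}=0$). You instead carry the decomposition all the way down and dispose of the levels with $2^j\ll L^{0.24}$ via the deterministic bound $|T_{j,i}|\le 2^j\|p\|_\infty\le 2^j(1+8\log L)$, whose total contribution is $O(L^{0.24}\log L)=o(\sqrt{L})$. This is a legitimate simplification that avoids both Carleson--Hunt and the Diophantine counting, at no cost. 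The paper's route would become necessary only if one needed sharper control of the small-block tail than $o(\sqrt{L})$, which is not the case here.
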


\begin{lemma} \label{lac_lemma2}
Let $f$ and $(s_\ell)_{\ell \geq 1}$ be as in Lemma~\ref{lac_lemma1}. Let $L$ be given, and write $r(\alpha)$ for the remainder term of the $L^{8}$-th partial sum of the Fourier series of $f$. Then we have
$$
\int_0^1 \left( \max_{1 \leq M \leq L} \left| \sum_{\ell=1}^{M} r(s_\ell \alpha) \right| \right)^2 ~d\alpha \leq \frac{4}{L^4}
$$
\end{lemma}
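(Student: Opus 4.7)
The idea is that the statement requires nothing more than the triangle inequality, Cauchy--Schwarz, and Parseval; neither the Hadamard gap condition nor any almost-orthogonality of the functions $r(s_\ell\alpha)$ is needed. The cut-off level $L^{8}$ has been chosen generously enough that the crude estimates below will still leave plenty of room to spare.

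The first step is to bound, by the triangle and Cauchy--Schwarz inequalities,
$$
\max_{1\le M\le L}\Bigl|\sum_{\ell=1}^{M}r(s_\ell\alpha)\Bigr|^{2}\le\Bigl(\sum_{\ell=1}^{L}|r(s_\ell\alpha)|\Bigr)^{2}\le L\sum_{\ell=1}^{L}r(s_\ell\alpha)^{2}.
$$
Since $s_\ell$ is a positive integer and $r$ is $1$-periodic, the map $\alpha\mapsto s_\ell\alpha\bmod 1$ is Lebesgue measure-preserving on $[0,1]$, so $\int_{0}^{1}r(s_\ell\alpha)^{2}\,d\alpha=\|r\|_{2}^{2}$ for every $\ell$. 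Integrating the previous inequality therefore gives
$$
\int_{0}^{1}\Bigl(\max_{1\le M\le L}\Bigl|\sum_{\ell=1}^{M}r(s_\ell\alpha)\Bigr|\Bigr)^{2}d\alpha\le L^{2}\|r\|_{2}^{2}.
$$
Next, using Parseval's identity together with the coefficient bound~\eqref{f_coeff_b}, and the fact that in the even (resp.\ odd) case only the $a_j$ (resp.\ $b_j$) are nonzero but the nonzero coefficient is still bounded by $1/j$,
$$
\|r\|_{2}^{2}=\tfrac{1}{2}\sum_{j>L^{8}}(a_{j}^{2}+b_{j}^{2})\le\sum_{j>L^{8}}\frac{1}{j^{2}}\le\frac{1}{L^{8}}.
$$
Combining these two estimates yields
$$
\int_{0}^{1}\Bigl(\max_{1\le M\le L}\Bigl|\sum_{\ell=1}^{M}r(s_\ell\alpha)\Bigr|\Bigr)^{2}d\alpha\le\frac{L^{2}}{L^{8}}=\frac{1}{L^{6}},
$$
which is (considerably) stronger than the asserted bound $4/L^{4}$.

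There is really no main obstacle here: the whole point of choosing the partial-sum index to be as large as $L^{8}$ is to make the Fourier tail negligibly small in $L^{2}$, so small that even the most naive bound on the maximum suffices. The heavier lifting in the overall argument is done by the exponential estimates in Lemmas~\ref{lac_lemma1} and~\ref{lac_lemma1a}, which will eventually control the main polynomial part $p(s_\ell\alpha)$ of $f(s_\ell\alpha)$; Lemma~\ref{lac_lemma2} serves only to guarantee that the truncation error is so tiny it can be swept under the rug.
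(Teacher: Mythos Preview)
Your proof is correct and follows essentially the same approach as the paper: both bound the maximum crudely (you via the triangle inequality and Cauchy--Schwarz, the paper via $(\max_M X_M)^2\le\sum_M X_M^2$ and Minkowski) and then exploit the smallness of $\|r\|_2$. Your route is in fact slightly cleaner and yields the stronger bound $1/L^6$ instead of the paper's $4/L^4$.
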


From Lemmas~\ref{lac_lemma1a} and~\ref{lac_lemma2} we will deduce the following Lemma~\ref{lac_lemma3}.

\begin{lemma} \label{lac_lemma3}
Let $f$ and $(s_\ell)_{\ell \geq 1}$ be as in Lemma~\ref{lac_lemma1}. Then there exists a number $L_0 = L_0(q)$, such that the following holds. Let $L \geq L_0$ be given, and assume that $L$ is an integral power of 2. Then we have
\begin{eqnarray*}
\p \left( \alpha \in (0,1):~\max_{1 \leq M \leq L} \left| \sum_{\ell=1}^M f(s_\ell \alpha) \right| > \left( \frac{30 q}{q-1} \right) \sqrt{L \log \log L} \right) \leq \frac{49}{(\log L)^{1.4}}
\end{eqnarray*}
and, under the additional assumption that $\|p\|_2^{1/4} \geq L^{-1/100}$, we also have
\begin{eqnarray*}
\p \left( \alpha \in (0,1):~\max_{1 \leq M \leq L} \left| \sum_{\ell=1}^M p(s_\ell \alpha) \right| > 43 \|p\|_2^{1/4} \frac{\sqrt{q}}{\sqrt{q}-1} \sqrt{L} \sqrt{\log \log L} + 2 \sqrt{L} \right) \leq \frac{46}{(\log L)^2}.
\end{eqnarray*}
The same two conclusions hold if $f$ is an odd function instead of an even function.
\end{lemma}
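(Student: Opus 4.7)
The idea is to reduce the problem to Lemma~\ref{lac_lemma1a} by the standard truncation trick: decompose $f = p + r$, where $p$ is the $L^8$-th Fourier partial sum and $r$ is the Fourier tail. Then for every $M \leq L$,
\[
\left|\sum_{\ell=1}^{M} f(s_\ell \alpha)\right| \leq \left|\sum_{\ell=1}^{M} p(s_\ell \alpha)\right| + \left|\sum_{\ell=1}^{M} r(s_\ell \alpha)\right|,
\]
so taking the maximum over $M$ and applying a union bound reduces the question to treating the two pieces separately. The partial-sum piece is handled directly by Lemma~\ref{lac_lemma1a} (both bounds), and the remainder piece by Lemma~\ref{lac_lemma2} combined with Chebyshev's inequality.

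Concretely, for the first bound I would set the threshold for the tail to $\sqrt{L\log\log L}$: by Lemma~\ref{lac_lemma2} and Chebyshev,
\[
\p\!\left(\max_{1 \leq M \leq L} \left|\sum_{\ell=1}^{M} r(s_\ell \alpha)\right| > \sqrt{L\log\log L}\right) \leq \frac{4/L^4}{L\log\log L},
\]
which is negligible compared to $1/(\log L)^{1.4}$. Combining this with the first bound of Lemma~\ref{lac_lemma1a}, the maximum of $|\sum f(s_\ell \alpha)|$ exceeds $\bigl(\tfrac{29q}{q-1}+1\bigr)\sqrt{L\log\log L}$ on a set of measure at most $\tfrac{48}{(\log L)^{1.4}} + \tfrac{4}{L^{5}\log\log L}$. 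Since $\tfrac{29q}{q-1}+1 \leq \tfrac{30q}{q-1}$ and the second summand is absorbed into the first for $L$ large enough, the claimed bound $\tfrac{49}{(\log L)^{1.4}}$ follows. For the second conclusion, I would set the remainder threshold to $\sqrt{L}$ so that by the same Chebyshev argument the remainder contributes at most $4/L^{5}$, which combined with the second bound of Lemma~\ref{lac_lemma1a} gives the required bound with the additional $\sqrt{L}$ inflation (turning $\sqrt{L}$ into $2\sqrt{L}$) and constant $\tfrac{46}{(\log L)^2}$.

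The proof is therefore essentially mechanical once Lemmas~\ref{lac_lemma1a} and~\ref{lac_lemma2} are in hand: the only thing to check is that the polynomial decay of the remainder tail (a power $L^{-4}$) is far stronger than the logarithmic tails coming from the exponential inequality, so that any reasonable threshold absorbs it without loss. Consequently I do not expect a real obstacle here; the conceptual and technical work has been concentrated in establishing the exponential inequality of Lemma~\ref{lac_lemma1} and, by a Kolmogorov-type maximal inequality argument, the maximal-sum bound of Lemma~\ref{lac_lemma1a}. Finally, the odd-function case follows by applying the even-function argument to $\tilde f(\alpha) := f(\alpha - 1/4)$ (or by directly repeating the proof, since Lemmas~\ref{lac_lemma1a} and~\ref{lac_lemma2} are stated for both parities).
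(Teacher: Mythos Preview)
Your proposal is correct and matches the paper's proof essentially line for line: decompose $f=p+r$, control the remainder via Lemma~\ref{lac_lemma2} and Chebyshev/Markov, and combine with the two conclusions of Lemma~\ref{lac_lemma1a} by a union bound. The only cosmetic difference is that the paper uses the threshold $\sqrt{L}$ (rather than $\sqrt{L\log\log L}$) for the remainder in the first part as well, which changes nothing.
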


As a consequence of Lemma~\ref{lac_lemma3} we obtain the following theorem, which is a bounded law of the iterated logarithm and is of some interest in its own right. As far as we know, this is the first law of the iterated logarithm for Hadamard lacunary function series which can be applied to a class of unbounded functions $f$.\\

\begin{theorem} \label{lac_co1}
Assume that $f$ is an even measurable function satisfying~\eqref{f}, whose Fourier coefficients satisfy~\eqref{f_coeff_b}. Furthermore, let $(s_\ell)_{\ell \geq 1}$ be a sequence of positive integers satisfying~\eqref{had} for some number $q > 1$. Then we have
$$
\limsup_{L \to \infty} \frac{\left| \sum_{\ell=1}^L f(s_\ell \alpha) \right|}{\sqrt{L \log \log L}} \leq c_q
$$
for almost all $\alpha \in (0,1)$, where we can choose
$$
c_q = \max \left\{\frac{85 q}{q-1},  122 \|f\|_2^{1/4} \frac{\sqrt{q}}{\sqrt{q}-1} \right\}.
$$
\end{theorem}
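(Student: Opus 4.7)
The plan is to derive Theorem~\ref{lac_co1} from the maximal tail estimates of Lemma~\ref{lac_lemma3} by a Borel--Cantelli argument along the dyadic subsequence $L_k := 2^k$. First I would apply Lemma~\ref{lac_lemma3} at $L = L_k$ for every $k$ large enough that $2^k \geq L_0(q)$. The two tail probabilities produced by the lemma take the form $C/(\log L_k)^{\beta} = C/(k \log 2)^{\beta}$ with $\beta \in \{1.4,\,2\}$, and both $\sum_k k^{-1.4}$ and $\sum_k k^{-2}$ converge. The Borel--Cantelli lemma then gives, for almost every $\alpha \in (0,1)$, some index $K(\alpha)$ such that for all $k \geq K(\alpha)$
\begin{equation*}
\max_{1 \leq M \leq 2^k} \left| \sum_{\ell=1}^{M} f(s_\ell \alpha) \right| \;\leq\; \frac{30 q}{q-1}\,\sqrt{2^k \log \log 2^k}
\end{equation*}
and, writing $p_k$ for the $L_k^{8}$-th partial Fourier sum of $f$,
\begin{equation*}
\max_{1 \leq M \leq 2^k} \left| \sum_{\ell=1}^{M} p_k(s_\ell \alpha) \right| \;\leq\; 43\,\|p_k\|_2^{1/4}\,\frac{\sqrt{q}}{\sqrt{q}-1}\,\sqrt{2^k \log \log 2^k} \;+\; 2\sqrt{2^k}.
\end{equation*}
If $\|f\|_2 = 0$ the theorem is trivial, so I assume $\|f\|_2 > 0$; Parseval then yields $\|p_k\|_2 \to \|f\|_2 > 0$, and the side condition $\|p_k\|_2^{1/4} \geq L_k^{-1/100}$ required in Lemma~\ref{lac_lemma3} is met for all large $k$.

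Next I would upgrade these dyadic estimates into bounds valid for every $L$. Given $L$, pick $k$ with $2^k \leq L < 2^{k+1}$; then $\sum_{\ell=1}^L f(s_\ell\alpha)$ is dominated by the maximum at level $2^{k+1}$, and the elementary ratio
\begin{equation*}
\frac{\sqrt{2^{k+1}\log \log 2^{k+1}}}{\sqrt{L \log \log L}} \;\longrightarrow\; \sqrt{2}
\end{equation*}
inflates each constant by at most $\sqrt{2}$ in the limsup. In the second bound I also need to replace $p_k$ by $f$: Lemma~\ref{lac_lemma2} together with Markov's inequality gives $\p\bigl(\max_{M \leq L_k} |\sum_{\ell=1}^M (f-p_k)(s_\ell\alpha)| > 1\bigr) \leq 4/L_k^4$, which is summable in $k$, so by Borel--Cantelli the remainder $f - p_k$ contributes at most $1$ almost surely for all large $k$, and hence $o(\sqrt{L \log \log L})$. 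The additive term $2\sqrt{L_k}$ is likewise $o(\sqrt{L \log \log L})$, while $\|p_k\|_2 \to \|f\|_2$. Consequently the first bound yields almost surely $\limsup \leq \sqrt{2}\cdot 30q/(q-1) \leq 85q/(q-1)$, and the second bound yields almost surely $\limsup \leq \sqrt{2}\cdot 43\,\|f\|_2^{1/4}\sqrt{q}/(\sqrt{q}-1) \leq 122\,\|f\|_2^{1/4}\sqrt{q}/(\sqrt{q}-1)$; either estimate proves $\limsup \leq c_q$.

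The main difficulty is essentially bookkeeping rather than conceptual: the truncation level $p_k$ depends on the dyadic index $k$, so one must check that Lemma~\ref{lac_lemma3} can be invoked coherently along a $k$-dependent family of approximants (which is fine, since at each fixed $k$ the lemma concerns the single polynomial $p_k$), and one must simultaneously track the three small perturbations --- the $\sqrt{2}$ dyadic inflation, the leftover $2\sqrt{L_k}$, and the Fourier remainder $f - p_k$ --- to confirm that all three are absorbed by the generous slack between $30\sqrt{2}\approx 42$, $43\sqrt{2}\approx 61$ and the stated constants $85$ and $122$ in $c_q$.
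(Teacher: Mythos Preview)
Your proof is correct and follows essentially the same approach as the paper: apply Lemma~\ref{lac_lemma3} along the dyadic subsequence $L=2^m$, use Borel--Cantelli (the tail probabilities $\asymp (\log 2^m)^{-1.4}$ and $(\log 2^m)^{-2}$ being summable in $m$), and then pass from dyadic $L$ to general $L$ at the cost of a factor $\sqrt{2}$. The paper additionally splits a general $f$ into even and odd parts, incurring a further factor of $2$ that accounts for the constants $85 \approx 2\sqrt{2}\cdot 30$ and $122 \approx 2\sqrt{2}\cdot 43$; since the theorem as stated already assumes $f$ even, your argument legitimately skips this step and lands well inside the stated $c_q$, and your explicit handling of the passage from $p_k$ to $f$ via Lemma~\ref{lac_lemma2} is slightly more careful than the paper's.
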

~\\

We note in passing that from our proofs it seems that the conclusion of Theorem~\ref{lac_co1} remains true if the conditions $|a_j| \leq j^{-1}, ~|b_j| \leq j^{-1}$ in~\eqref{f_coeff_b} are relaxed to $|a_j| \leq j^{-1/2-\ve},~|b_j| \leq j^{-1/2-\ve}$ for some fixed $\ve > 0$; however, in this case the constant $c_q$ has to be replaced by some other constant $c_{q,\ve}$ which may also depend on $\ve$. We will not pursue this possible generalization any further in the present paper.\\

For the proofs of Theorem~\ref{th_thue} and~\ref{th_theorem2} we will also need the following result. It has first been stated by Fortet~\cite{fortet}; a concise proof can be found in~\cite{maruyama}. This result can be seen as a special case of the more general results in~\cite{aist2}. 

\begin{lemma} \label{lac_lemma4}
Let $f$ be a function satisfying~\eqref{f}, which additionally satisfies a H\"older continuity condition of order $\beta$ for some $\beta>0$. Then
$$
\limsup_{L \to \infty} \frac{\sum_{\ell=0}^{L-1} f(2^\ell \alpha)}{\sqrt{2 L \log \log L}} = \sigma_f \qquad \textrm{for almost all $\alpha$},
$$
where 
\begin{equation} \label{sigmap}
\sigma_f^2 = \lim_{m \to \infty} \frac{1}{m} \int_0^1 \left(f(\alpha) + \dots + f(2^{m-1} \alpha)\right)^2 d\alpha.
\end{equation}
\end{lemma}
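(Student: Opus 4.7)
The plan is to exploit the ergodic-theoretic structure. The doubling map $T\alpha = 2\alpha \bmod 1$ preserves Lebesgue measure on $[0,1]$ and is strongly mixing, so $(f(2^\ell \alpha))_{\ell \geq 0} = (f(T^\ell \alpha))_{\ell \geq 0}$ is a stationary sequence. H\"older continuity of $f$ gives exponential decay of correlations under $T$: writing out the Fourier series of $f$, one checks that the Fourier coefficients satisfy $|a_j|, |b_j| = O(j^{-\beta})$ (a standard consequence of H\"older-$\beta$ regularity), and since the ``collisions'' $j_1 2^{\ell_1} = j_2 2^{\ell_2}$ are sparse, $\int_0^1 f(\alpha)f(T^k \alpha)\, d\alpha$ decays geometrically in $k$. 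Expanding the square in \eqref{sigmap} and using stationarity,
\begin{equation*}
\frac{1}{m}\int_0^1 \Bigl(\sum_{\ell=0}^{m-1} f(T^\ell \alpha)\Bigr)^2 d\alpha = \|f\|_2^2 + 2 \sum_{k=1}^{m-1} \Bigl(1 - \tfrac{k}{m}\Bigr) \int_0^1 f \cdot (f \circ T^k) \, d\alpha
\end{equation*}
converges as $m \to \infty$, so $\sigma_f^2$ exists and equals $\|f\|_2^2 + 2 \sum_{k \geq 1}\int_0^1 f \cdot (f\circ T^k)\, d\alpha$.

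For the upper bound in the LIL I would use a Fourier truncation argument. Split $f = p_N + r_N$, where $p_N$ is the $N$-th partial sum of the Fourier series of $f$. The H\"older hypothesis guarantees $\|r_N\|_\infty \to 0$ at a polynomial rate in $N$, in particular $\|r_N\|_2 \to 0$. For the trigonometric polynomial part, $\sum_{\ell=0}^{L-1} p_N(2^\ell \alpha)$ is a Hadamard lacunary trigonometric polynomial, so the classical LIL of Erd\H os--G\'al / Salem--Zygmund applies and yields $\limsup_L |\sum_{\ell} p_N(2^\ell \alpha)|/\sqrt{2L \log\log L} = \sigma_{p_N}$ a.e., with $\sigma_{p_N}$ defined analogously to $\sigma_f$; by the $L^2$ convergence $p_N \to f$ and the exponential mixing, $\sigma_{p_N} \to \sigma_f$. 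For the tail piece, I would apply a maximal inequality in the spirit of Lemma~\ref{lac_lemma3} (whose derivation for $r_N$ goes through because the Fourier coefficients of $r_N$ inherit the decay~\eqref{f_coeff_b}), together with a Gál--Koksma / Borel--Cantelli step, to show that for a suitable choice $N = N(L)$ (e.g.\ a small power of $L$) the remainder $\sum_{\ell=0}^{L-1} r_N(2^\ell \alpha)$ is $o(\sqrt{L \log\log L})$ almost surely.

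For the matching lower bound $\limsup \geq \sigma_f$ I would employ a blocking / second moment argument: cut the interval $[0,L]$ into long blocks separated by shorter gaps, use mixing to show that the block sums are nearly independent and approximately Gaussian with variance $\approx \sigma_f^2 \cdot(\text{block length})$, and then invoke Kolmogorov's converse law of the iterated logarithm on the block sums. The main obstacle, as so often with lacunary LILs, is the coordinated treatment of the two sides: the upper bound requires careful tail truncation (with an error in $N$ balanced against the LIL normalisation) while the lower bound requires a decorrelation argument robust enough to survive the H\"older (rather than bounded variation) regularity. Both tasks are carried out in Fortet's original paper~\cite{fortet}, in the streamlined form of Maruyama~\cite{maruyama}, and in the more general framework of~\cite{aist2}; since we only need to invoke the result, we may quote it directly.
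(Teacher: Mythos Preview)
The paper does not actually prove this lemma: it is stated as a known result, attributed to Fortet~\cite{fortet} with a concise proof in Maruyama~\cite{maruyama} and a more general version in~\cite{aist2}. Your proposal ultimately does the same thing---you cite exactly these three references and say the result may be quoted directly---so your treatment matches the paper's. The proof sketch you give beforehand (ergodic structure of the doubling map, exponential decay of correlations from H\"older regularity, Fourier truncation for the upper bound, blocking/decorrelation for the lower bound) is a reasonable outline of how such results are proved, but it is extra material the paper does not include.
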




\section{Proofs of results from Section~\ref{sect_lac}} \label{sect_lac2}

\begin{proof}[Proof of Lemma~\ref{lac_lemma1}] 
The proof of Lemma~\ref{lac_lemma1}, as well as the proofs of Lemmas~\ref{lac_lemma1a},~\ref{lac_lemma2},~\ref{lac_lemma3} and Theorem~\ref{lac_co1}, uses methods of Takahashi~\cite{taka} and Philipp~\cite{philipp}.\\

Assuming that $f$ is even, the $L^{8}$-th partial sum of the Fourier series of $f$ is of the form
$$
p(\alpha) = \sum_{j=1}^{L^8} a_j \cos 2 \pi j \alpha,
$$
where by assumption the coefficients $a_j$ satisfy the inequality on the left-hand side of~\eqref{f_coeff_b}. We note that~\eqref{f_coeff_b} implies that
\begin{equation} \label{pinf}
\|p\|_\infty \leq \sum_{j=1}^{L^8} \frac{1}{j} \leq 1+ 8\log L.
\end{equation}
We divide the set of integers $\{1, \dots, L\}$ into blocks $\Delta_1, \dots, \Delta_w$ of consecutive numbers, for some appropriate $w$, such that every block contains $\left\lceil \log_q \left(4 L^8\right) \right\rceil$ numbers (the last block may contain less).\footnote{We assume throughout that $L$ is sufficiently large such that all appearing logarithms are well-defined and positive.} More precisely, we set 
$$
w = \left\lceil \frac{L}{\lceil \log_q \left(4 L^8\right) \rceil} \right\rceil
$$ 
and
$$
\Delta_i = \Big\{ (i-1) \left\lceil \log_q \left(4 L^8\right) \right\rceil + 1, \dots, i \left\lceil \log_q (4 L^8) \right\rceil \Big\} \cap \{1, \dots, L\}, \qquad \textrm{for $1 \leq i \leq w$}.
$$
We set
$$
I_1 = \int_0^1 \exp \left( 2 \lambda \sum_{\substack{1 \leq i \leq w,\\ i ~\textrm{even}}} ~\sum_{\ell \in \Delta_i} p(s_\ell \alpha) \right) ~d\alpha
$$
and
$$
I_2 = \int_0^1 \exp \left( 2 \lambda \sum_{\substack{1 \leq i \leq w,\\ i ~\textrm{odd}}} ~\sum_{\ell \in \Delta_i} p(s_\ell \alpha) \right) ~d\alpha.
$$
Then by the Cauchy--Schwarz inequality we have
\begin{equation} \label{i1i2}
\int_0^1 \exp\left(\lambda \sum_{\ell=1}^{L} p(s_\ell \alpha) \right) ~d\alpha \leq \sqrt{I_1 I_2}.
\end{equation}
Writing 
\begin{equation*} \label{ui}
U_i = \sum_{\ell \in \Delta_i} p(s_\ell \alpha), \qquad 1 \leq i \leq w,
\end{equation*}
and using the inequality
$$
e^x \leq 1 + x + x^2, \qquad \textrm{which is valid for $|x| \leq 1$}, 
$$
we have
\begin{eqnarray}
I_1 & = & \int_0^1 \prod_{\substack{1 \leq i \leq w,\\ i ~\textrm{even}}} \exp \left( 2 \lambda \sum_{\ell \in \Delta_i} p(s_\ell \alpha) \right) ~d\alpha \nonumber\\
& \leq & \int_0^1 \prod_{\substack{1 \leq i \leq w,\\ i ~\textrm{even}}} \left( 1 + 2 \lambda U_i + 4 \lambda^2 U_i^2 \right) ~d\alpha, \label{i1alpha}
\end{eqnarray}
where we used the fact that by~\eqref{f_coeff_b},~\eqref{lambda} and~\eqref{pinf} we have
\begin{eqnarray*}
|2 \lambda U_i| & \leq & 2 L^{-1/9} \|p\|_\infty |\Delta_i| \\
& \leq & 2 L^{-1/9} (1 + 8 \log L) \left\lceil \log_q (4 L^8) \right\rceil\\
& \leq & 1
\end{eqnarray*}
for $L \geq L_0(q)$\\

Using the classical trigonometric identity
\begin{equation}\label{trigid}
(\cos y)(\cos z) = \frac{\cos (y+z) + \cos (y-z)}{2}, \qquad \textrm{ for $y,z \in \R$,}
\end{equation}
we have
\begin{eqnarray}
U_i^2 & = & \left( \sum_{\ell \in \Delta_i} ~\sum_{j=1}^{L^8} a_j \cos 2 \pi j s_\ell \alpha \right)^2 \nonumber\\
& = & \sum_{\ell_1,\ell_2 \in \Delta_i} ~\sum_{1 \leq j_1, j_2 \leq L^8} a_{j_1} a_{j_2} \frac{\cos ( 2 \pi (j_1 s_{\ell_1} + j_2 s_{\ell_2})) + \cos ( 2 \pi (j_1 s_{\ell_1} - j_2 s_{\ell_2}))}{2} \label{frequ}\\
& =: & V_i + W_i. \label{viwi}
\end{eqnarray}
Here we write $V_i$ for the sum of all those cosine-functions having frequencies in the interval $\left[ s_{\min(\Delta_i)}, 2 L^8 s_{\max (\Delta_i)}\right]$, where $\min(\Delta_i)$ and $\max(\Delta_i)$ denote the smallest resp. largest element of $\Delta_i$, and we write $W_i$ for the sum of those cosine-functions having frequencies smaller than $s_{\min(\Delta_i)}$. It is easy to check that no other frequencies can occur in~\eqref{frequ}. We note that all the frequencies of the cosine-functions in $U_i$ are also contained in the interval $\left[s_{\min(\Delta_i}, 2 L^8 s_{\max(\Delta_i)} \right]$, and write
\begin{equation} \label{xiuivi}
X_i = 2 \lambda U_i + 4 \lambda^2 V_i.
\end{equation}
Using this notation we have
$$
\prod_{\substack{1 \leq i \leq w,\\ i ~\textrm{even}}} \left( 1 + 2 \lambda U_i + 4 \lambda^2 U_i^2 \right) = \prod_{\substack{1 \leq i \leq w,\\ i ~\textrm{even}}} \left( 1 + X_i + 4 \lambda^2 W_i \right).
$$
From Minkowski's inequality and~\eqref{f_coeff_b} we deduce that
\begin{eqnarray}
W_i & \leq & \frac{1}{2} \underbrace{\sum_{\ell_1, \ell_2 \in \Delta_i}  ~\sum_{1 \leq j_1,j_2 \leq L^8}}_{|j_1 s_{\ell_1} - j_2 s_{\ell_2}| < \min(\Delta_i)} |a_{j_1} a_{j_2}| \nonumber\\
& \leq & \sum_{\substack{\ell_1, \ell_2 \in \Delta_i,\\ \ell_1 \leq \ell_2}}  ~\sum_{\substack{1 \leq j_1,j_2 \leq L^8,\\ j_1 > j_2 s_{\ell_2} / s_{\ell_1} - 1}} |a_{j_1} a_{j_2}| \label{wi1}\\
& \leq & \sum_{\substack{\ell_1, \ell_2 \in \Delta_i,\\ \ell_1 \leq \ell_2}}  ~\sum_{\substack{1 \leq j_1,j_2 \leq L^8,\\ j_1 > j_2 s_{\ell_2} / s_{\ell_1} - 1}} \frac{1}{j_1 j_2} \nonumber\\
& \leq & \sum_{\substack{\ell_1, \ell_2 \in \Delta_i,\\ \ell_1 \leq \ell_2}} ~\sum_{j=1}^{L^8} \frac{1}{j} \underbrace{\frac{1}{\lceil j s_{\ell_2} / s_{\ell_1} - 1 \rceil}}_{\leq 2 s_{\ell_1} / (j s_{\ell_2})} \nonumber\\
& \leq & 2 \sum_{\substack{\ell_1, \ell_2 \in \Delta_i,\\ \ell_1 \leq \ell_2}} ~\sum_{j=1}^{L^8} \frac{s_{\ell_1}}{j^2 s_{\ell_2}} \nonumber\\
& \leq & 2 |\Delta_i| \frac{q}{q-1} \frac{\pi^2}{6}.\label{wi2}
\end{eqnarray}
Another way of continuing from line~\eqref{wi1} is to use the Cauchy--Schwarz inequality, which leads to
\begin{eqnarray}
W_i & \leq & \sum_{\substack{\ell_1, \ell_2 \in \Delta_i,\\ \ell_1 \leq \ell_2}} \sum_{1 \leq j_2 \leq L^8} \frac{1}{j_2} \sum_{\substack{1 \leq j_1 \leq L^8, \\j_1 > j_2 s_{\ell_2} / s_{\ell_1} - 1}} |a_{j_1}| \nonumber\\
& \leq & \sum_{\substack{\ell_1, \ell_2 \in \Delta_i,\\ \ell_1 \leq \ell_2}}   \sum_{1 \leq j_2 \leq L^8} \frac{1}{j_2} \left(\underbrace{\sum_{\substack{1 \leq j_1 \leq L^8, \\j_1 > j_2 s_{\ell_2} / s_{\ell_1} - 1}}  \overbrace{a_{j_1}^2}^{\leq 1/j_1^2}}_{\leq 4 s_{\ell_1} /(j_2 s_{\ell_2})} \right)^{1/2} \underbrace{\left(\sum_{\substack{1 \leq j_1 \leq L^8, \\j_1 > j_2 s_{\ell_2} / s_{\ell_1} - 1}}  a_{j_1}^2\right)^{1/2}}_{\leq \|p\|_2^{1/2}} \nonumber\\
& \leq & 2 \sum_{\substack{\ell_1, \ell_2 \in \Delta_i,\\ \ell_1 \leq \ell_2}}   \sum_{1 \leq j \leq L^8} \sqrt{\frac{s_{\ell_1}}{j^3 s_{\ell_2}}} \|p\|_2^{1/2} \nonumber\\
& \leq & 6 \frac{\sqrt{q}}{\sqrt{q}-1} \|p\|_2^{1/2}. \label{wi3}
\end{eqnarray}

Now assume that $i_1 < i_2$ are two indices from the set $\{1, \dots, w\}$, and that both $i_1$ and $i_2$ are even. Then by construction the frequency of any trigonometric function in $X_{i_2}$ is at least twice as large as the frequency of any trigonometric function in $X_{i_1}$. To see why this is the case, we recall that the frequency of the largest trigonometric function in $X_{i_2}$ is at most $2 L^8 s_{\max(\Delta_{i_1})}$, that the frequency of the smallest trigonometric function in $X_{i_1}$ is at least $s_{\min(\Delta_{i_2})}$, and that by~\eqref{had}
\begin{eqnarray*}
\frac{s_{\min(\Delta_{i_2})}}{s_{\max(\Delta_{i_1})}} & \geq & q^{\min(\Delta_{i_2}) - \max(\Delta_{i_1})} \\
& = & q^{\left\lceil \log_q (4 L^8) \right\rceil}\\
& \geq & 4 L^8.
\end{eqnarray*}
As a consequence for every set of distinct indices $i_{1}, \dots, i_v$ (where the cardinality $v$ is arbitrary), all of which are even and are contained in $\{1, \dots, w\}$, the functions $X_{i_1}, \dots, X_{i_v}$ are orthogonal, i.e.
\begin{equation} \label{ortho}
\int_0^1 X_{i_1} \cdot \dots \cdot X_{i_v} ~d\alpha = 0
\end{equation}
(this argument is explained in more detail in~\cite{philipp,taka}). Thus by~\eqref{i1alpha},~\eqref{viwi},~\eqref{xiuivi},~\eqref{wi2} and~\eqref{ortho} we have
\begin{eqnarray*}
I_1 & \leq & \int_0^1 \prod_{\substack{1 \leq i \leq w,\\ i ~\textrm{even}}} \left( 1 + X_i + 4 \lambda^2 W_i \right)~d\alpha \\
& \leq & \int_0^1 \prod_{\substack{1 \leq i \leq w,\\ i ~\textrm{even}}} \left(1 + X_i + \frac{4 \lambda^2 |\Delta_i| \pi^2}{3} \frac{q}{q-1}\right) ~d\alpha \\
& = & \int_0^1 \underbrace{\prod_{\substack{1 \leq i \leq w,\\ i ~\textrm{even}}} \left(1 + \frac{4 \lambda^2 |\Delta_i| \pi^2}{3} \frac{q}{q-1}\right)}_{\textrm{does not depend on $\alpha$}} ~d\alpha \\
& \leq & \prod_{\substack{1 \leq i \leq w,\\ i ~\textrm{even}}} \exp \left(\frac{4 \lambda^2 |\Delta_i| \pi^2}{3} \frac{q}{q-1}\right)\\
& = & \exp \left( \sum_{\substack{1 \leq i \leq w,\\ i ~\textrm{even}}} \frac{4 \lambda^2 |\Delta_i| \pi^2}{3} \frac{q}{q-1}\right).
\end{eqnarray*}
In the same way we can get an upper bound for $I_2$, and thus by~\eqref{i1i2} we finally obtain
\begin{eqnarray*}
& & \int_0^1 \exp\left(\lambda \sum_{\ell=1}^{L} p(s_\ell \alpha) \right) ~d\alpha\\
& \leq & \exp \left( \sum_{\substack{1 \leq i \leq w,\\ i ~\textrm{even}}} \frac{2 \lambda^2 |\Delta_i| \pi^2}{3} \frac{q}{q-1}\right) \exp \left( \sum_{\substack{1 \leq i \leq w,\\ i ~\textrm{odd}}} \frac{2 \lambda^2 |\Delta_i| \pi^2}{3} \frac{q}{q-1}\right) \\
& = & \exp \left( \frac{2 \lambda^2 L \pi^2}{3} \frac{q}{q-1}\right).
\end{eqnarray*}
This proves the first conclusion of the lemma. In the same way we can use~\eqref{wi3} (and the corresponding upper bound for $I_2$) to obtain
$$
\int_0^1 \exp\left(\lambda \sum_{\ell=1}^{L} p(s_\ell \alpha) \right) ~d\alpha  \leq \exp \left(12 \lambda^2 \frac{\sqrt{q}}{\sqrt{q}-1} \|p\|_2^{1/2} \right),
$$
which proves the second conclusion of the lemma.\\

Thus we have proved both parts of Lemma~\ref{lac_lemma1} in the case when $f$ is even; the proof in the odd case can be carried out in exactly the same way.\\
\end{proof}

\begin{proof}[Proof of Lemma~\ref{lac_lemma1a}] 
By assumption $L$ is an integral power of 2. We set $\nu = \log_2 L$. By classical dyadic decomposition, we can write every subset $\{1, \dots, M\}$ of $\{1, \dots, L\}$ as the disjoint sum of at most one set of cardinality $2^{\nu-1}$, at most one set of cardinality $2^{\nu-2}$, at most one set of cardinality $2^{\nu-3}$, and so on, at most one set of cardinality $2^{\lceil \nu/4 \rceil}$, and additionally at most one set of cardinality \emph{at most} $2^{\lceil \nu/4 \rceil}$, where all these sets contain consecutive positive integers. To be able to represent every sets $\{1, \dots, M\}$ in this way, we need $2^{\mu}$ sets of cardinality $2^{\nu - \mu}$, for $\mu \in \{1, \dots, \nu-\lceil \nu/4 \rceil\}$, and all the sets of cardinality at most $2^{\lceil \nu/4 \rceil}$ starting at an integer multiple of $2^{\lceil \nu/4 \rceil}$. More precisely, the sets of cardinality $2^{\nu-\mu}$ are of the form
$$
\left\{j 2^{\nu-\mu}+1, \dots, (j+1) 2^{\nu-\mu} \right\}, \qquad j \in \{0, \dots, 2^{\mu}-1\}, \quad \mu \in \{1, \dots, \nu- \lceil \nu/4\rceil\},
$$
and the sets of cardinality at most $2^{\nu/4}$ are of the form 
$$
\left\{j 2^{\lceil \nu/4 \rceil} + 1, , \dots, j 2^{\lceil \nu/4 \rceil} + w \right\}, \qquad j \in \{0, \dots, 2^{\nu - \lceil \nu/4 \rceil} -1 \}, \quad w \in \left\{1, \dots, 2^{\lceil \nu/4 \rceil}\right\}.
$$
For $j \in \{0, \dots, 2^{\mu}-1\}$ and $\mu \in \left\{1, \dots, \nu - \lceil \nu/4 \rceil \right\}$, we set
$$
G_{\mu,j} = \left( \alpha \in (0,1):~\sum_{\ell=j 2^{\nu-\mu}+1}^{(j+1) 2^{\nu-\mu}} p(s_\ell \alpha) >  \frac{8 q}{q-1} \sqrt{2^{\nu-\mu}} \sqrt{\log \log 2^{\nu-\mu}} + \mu \sqrt{2^{\nu-\mu}} \right)
$$
Using the first part of Lemma~\ref{lac_lemma1} with 
$$
\lambda = \frac{\sqrt{\log \log (2^{\nu-\mu})}}{\sqrt{2^{\nu-\mu}}}
$$
(note that by our construction this value of $\lambda$ is admissible in Lemma~\ref{lac_lemma1}, provided that $L$ is sufficiently large) we have
$$
\int_0^1 \exp\left(\frac{\sqrt{\log \log (2^{\nu-\mu})}}{\sqrt{2^{\nu-\mu}}} \sum_{\ell=j 2^{\nu-\mu}+1}^{(j+1) 2^{\nu-\mu}} p(s_\ell \alpha) \right) ~d\alpha \leq \exp \left( \frac{2 \log \log (2^{\nu-\mu}) q \pi^2}{3(q-1)} \right),
$$
and consequently 
$$
\p (G_{\mu,j}) \leq \exp \left( \left(\frac{2 \pi^2}{3} - 8 \right) \frac{q}{q-1} \log \log (2^{\nu-\mu}) - \mu \right) \leq \frac{1}{e^{\mu} (\log (2^{\nu-\mu}))^{1.4}}.
$$
Thus we have
\begin{eqnarray}
\p \left(\bigcup_{\mu=1}^{\nu- \lceil \nu/4 \rceil} ~ \bigcup_{j=0}^{2^{\mu}-1} G_{\mu,j} \right) & \leq & \sum_{\mu=1}^{\nu - \lceil \nu/4 \rceil} \frac{2^\mu}{e^{\mu} \left(\log \left(2^{\lceil \nu/4 \rceil}\right)\right)^{1.4}} \nonumber\\
& \leq & \frac{20}{(\log L)^{1.4}}. \label{probg}
\end{eqnarray}
Now we set
$$
H_{j} = \left( \alpha \in (0,1):~\max_{1 \leq w \leq 2^{\lceil \nu/4} \rceil} \left| \sum_{\ell=j 2^{\lceil \nu/4 \rceil} + 1}^{j 2^{\lceil \nu/4 \rceil} + w} p(s_\ell \alpha) \right| >  \sqrt{L} \right), \qquad j \in \{0, \dots, 2^{\nu - \lceil \nu/4 \rceil} -1 \}.
$$
By~\eqref{f_coeff_b}, Minkowski's inequality, and the Carleson--Hunt inequality (see for example~\cite{arias}), we have
\begin{eqnarray}
& & \left( \int_0^1 \max_{1 \leq w \leq 2^{\lceil \nu/4 \rceil}} \left( \sum_{\ell=j 2^{\lceil \nu/4 \rceil} + 1}^{j 2^{\lceil \nu/4 \rceil}+w} p(s_\ell \alpha) \right)^6 d\alpha \right)^{1/6} \nonumber\\
& \leq & \sum_{j=1}^{L^8} |a_j| \left( \int_0^1 \max_{1 \leq w \leq 2^{\lceil \nu/4 \rceil}} \left( \sum_{\ell=j 2^{\lceil \nu/4 \rceil} + 1}^{j 2^{\lceil \nu/4 \rceil}+w} \cos 2 \pi j s_\ell \alpha \right)^6 d\alpha \right)^{1/6} \nonumber\\
& \leq & (1 + 8 \log L) c_{\textup{abs}} \left( \int_0^1  \left( \sum_{\ell=j 2^{\lceil \nu/4 \rceil} + 1}^{(j+1) 2^{\lceil \nu/4 \rceil}} \cos 2 \pi j s_\ell \alpha \right)^6 d\alpha \right)^{1/6} \label{lacint}
\end{eqnarray}
for some absolute constant $c_{\textup{abs}}$. Estimating the integral in~\eqref{lacint} can be reduced (via~\eqref{trigid}) to the problem of counting the number of solutions $(\ell_1, \dots, \ell_6)$ of the Diophantine equation
$$
s_{\ell_1} \pm \dots \pm s_{\ell_6} = 0, \qquad \textrm{for indices $\ell_1, \dots, \ell_6$ in the respective index range};
$$
we have
$$
\left( \int_0^1  \left( \sum_{\ell=j 2^{\lceil \nu/4 \rceil} + 1}^{(j+1) 2^{\lceil \nu/4 \rceil}} \cos 2 \pi j s_\ell \alpha \right)^6 d\alpha \right)^{1/6} \leq c_q \left( 2^{\nu/4} \right)^{1/2}
$$
for some constant $c_q$ depending only on $q$ (see, for example,~\cite{erdgal}). As a consequence by Markov's inequality we obtain 
$$
\p (H_j) \leq \hat{c}_q (1 + 8 \log L)^6 \frac{L^{6/8}}{L^3} = \hat{c}_q (1 + 8 \log L)^6 L^{-9/4}
$$
and
\begin{equation} \label{probh}
\p \left( \bigcup_{j=0}^{2^{\nu - \lceil \nu/4 \rceil} -1} H_j \right) \leq L^{3/4} \hat{c}_q (1 + 8 \log L)^6 L^{-9/4} = \hat{c}_q (1 + 8 \log L)^6 L^{-6/4}
\end{equation}
for some constant $\hat{c}_q$ depending only on $q$. We set
$$
F = \left( \bigcup_{\mu=1}^{\nu-\lceil\nu/4\rceil} \bigcup_{j=0}^{2^{\mu}-1} G_{\mu,j} \right) \cup \left( \bigcup_{j=0}^{2^{\nu - \lceil\nu/4\rceil} -1} H_j \right).
$$
Then by~\eqref{probg} and~\eqref{probh} we have
\begin{eqnarray} 
\p \left( F \right) & \leq & \frac{23}{(\log L)^{1.4}} + \hat{c}_q (1 + 8 \log L)^6 L^{-6/4} \\
& \leq &  \frac{24}{(\log L)^{1.4}} \label{Fprob}
\end{eqnarray}
for sufficiently large $L$.\\

By the dyadic decomposition described at the beginning of this proof, for every $\alpha \in F^C$ (where $F^C$ denotes the complement of the set $F$) we have
\begin{eqnarray*}
\sum_{\ell=1}^M p(s_\ell \alpha) & \leq & \sum_{\mu=1}^{\nu - \lceil\nu/4\rceil} \left( \frac{8q}{q-1} \sqrt{2^{\nu-\mu}} \sqrt{\log \log 2^{\nu-\mu}} + \mu \sqrt{2^{\nu-\mu}} \right) + \sqrt{L} \\
& \leq & \frac{8q}{q-1} (1 + \sqrt{2}) \sqrt{L \log \log L} + (4 + 3 \sqrt{2})  \sqrt{L} + \sqrt{L} \\
& \leq & \frac{29q}{q-1} \sqrt{L \log \log L}
\end{eqnarray*}
for all possible values $M \in \{1, \dots, L\}$. Thus we have shown that 
\begin{eqnarray} 
\p \left( \alpha \in (0,1):~\max_{1 \leq M \leq L} \left(\sum_{\ell=1}^M p(s_\ell \alpha)\right) > \frac{29 q}{q-1} \sqrt{L \log \log L} \right) \leq \frac{24}{(\log L)^{1.4}} \label{exprop}
\end{eqnarray}
for sufficiently large $L$. Note that whenever the function $f$ satisfies the assumptions of Lemma~\ref{lac_lemma1}, then the function $-f$ also satisfies these assumptions. Thus applying exactly the same arguments as above to the functions $-f$ and $-p$ instead of $f$ and $p$ we also obtain
\begin{eqnarray*}
\p \left( \alpha \in (0,1):~\max_{1 \leq M \leq L} \left(\sum_{\ell=1}^M \left(- p(s_\ell \alpha)\right)\right) > \frac{29 q}{q-1} \sqrt{L \log \log L} \right) \leq \frac{24}{(\log L)^{1.4}}
\end{eqnarray*}
for sufficiently large $L$, which, together with~\eqref{exprop}, proves the first conclusion of Lemma~\ref{lac_lemma1a}.\\

The proof of the second conclusions of Lemma~\ref{lac_lemma1a} is very similar to that of the first conclusion of the lemma. We use the same dyadic decomposition, but now we define 
$$
G_{\mu,j} = \left( \alpha \in (0,1):~\sum_{\ell=j 2^{\nu-\mu}+1}^{(j+1) 2^{\nu-\mu}} p(s_\ell \alpha) > \left(14 \|p\|_2^{1/4} \frac{\sqrt{q}}{\sqrt{q}-1} + \|p\|_2^{1/4} \mu \right) \sqrt{2^{\nu-\mu}} \sqrt{\log \log 2^{\nu-\mu}} \right)
$$
and use the second part of Lemma~\ref{lac_lemma1} with
$$
\lambda = \|p\|_2^{-1/4} \frac{\sqrt{\log \log (2^{\nu-\mu})}}{\sqrt{2^{\nu-\mu}}}
$$
Note that this choice of $\lambda$ is admissible, due to the restrictions that $\mu \leq \nu - \lceil \nu/4\rceil$ and $\|p\|_2^{1/4} \geq L^{-1/100}$. We obtain that
$$
\int_0^1 \exp\left(\|p\|_2^{-1/4} \frac{\sqrt{\log \log (2^{\nu-\mu})}}{\sqrt{2^{\nu-\mu}}} \sum_{\ell=j 2^{\nu-\mu}+1}^{(j+1) 2^{\nu-\mu}} p(s_\ell \alpha) \right) ~d\alpha \leq \exp \left( \frac{12 \sqrt{q}\log \log (2^{\nu-\mu})}{\sqrt{q}-1} \right),
$$
and consequently 
$$
\p (G_{\mu,j}) \leq \exp \left( \left(12 - 14 \right) \frac{\sqrt{q}}{\sqrt{q}-1} \log \log (2^{\nu-\mu}) - \mu \right) \leq \frac{1}{e^{\mu} (\log (2^{\nu-\mu}))^{2}}.
$$
The sets $H_j$ can be defined in the same way as in the proof of the first part of the lemma. Using similar calculations we obtain that
\begin{eqnarray*}
\max_{1 \leq L \leq M} \left| \sum_{\ell=1}^M p(s_\ell \alpha) \right| & \leq & \sum_{\mu=1}^\infty 2^{-\mu/2} \left(\left(14+\mu\right) \|p\|_2^{1/4} \frac{\sqrt{q}}{\sqrt{q}-1} \sqrt{L} \sqrt{\log \log L} \right) + \sqrt{L} \\
& \leq & 43 \|p\|_2^{1/4} \frac{\sqrt{q}}{\sqrt{q}-1} \sqrt{L} \sqrt{\log \log L} + \sqrt{L},
\end{eqnarray*}
except for a set of measure at most
$$
\left(\sum_{\mu=1}^\infty \frac{2^\mu}{e^\mu (\log (L^{1/4}))^2}\right) + \hat{c}_q (1 + 8 \log L)^6 L^{-9/4} \leq \frac{45}{(\log L)^2}
$$
(provided that $L$ is sufficiently large). This proves the second part of Lemma~\ref{lac_lemma1a}.\\
\end{proof}

\begin{proof}[Proof of Lemma~\ref{lac_lemma2}] 
The lemma follows from a simple application of Minkowski's inequality. We have
\begin{eqnarray}
\int_0^1 \left( \max_{1 \leq M \leq L} \left| \sum_{\ell=1}^{M} r(s_\ell \alpha) \right| \right)^2 d\alpha & \leq & \sum_{M=1}^L \int_0^1 \left(\left| \sum_{\ell=1}^{M} r(s_\ell \alpha) \right| \right)^2 d\alpha \nonumber\\
& \leq & \sum_{M=1}^L \left( \sum_{\ell=1}^M \|r\|_2  \right)^2. \label{rest}
\end{eqnarray}
By~\eqref{f_coeff_b} we have 
$$
\|r\|_2 \leq \left(\sum_{j=L^8+1}^{\infty} \frac{2}{j^2}\right)^{1/2} \leq \frac{2}{L^{7/2}},
$$
which implies, together with~\eqref{rest}, that
$$
\int_0^1 \left( \max_{1 \leq M \leq L} \left| \sum_{\ell=1}^{M} r(s_\ell \alpha) \right| \right)^2 ~d\alpha \leq \frac{4}{L^4}.
$$
This proves the lemma.\\
\end{proof}

\begin{proof}[Proof of Lemma~\ref{lac_lemma3}] 
Assume that we have decomposed $f=p+r$ as in Lemmas~\ref{lac_lemma1},~\ref{lac_lemma1a} and~\ref{lac_lemma2}. By Lemma~\ref{lac_lemma2} and Markov's inequality we have
$$
\p \left( \alpha \in (0,1):~\max_{1 \leq M \leq L} \left| \sum_{k=1}^M r(s_\ell \alpha) \right| > \sqrt{L} \right) \leq \frac{4}{L^5}.
$$
Together with Lemma~\ref{lac_lemma1a} this yields
\begin{eqnarray*}
& & \p \left( \alpha \in (0,1):~\max_{1 \leq M \leq L} \left| \sum_{k=1}^M f(s_\ell \alpha) \right| > \underbrace{\left(\frac{29 q}{q-1}+1\right)}_{\leq 30 q / (q-1)} \sqrt{L \log \log L} \right) \\
& \leq & \frac{48}{(\log N)^{1.4}} + \frac{4}{L^5} \\
& \leq & \frac{49}{(\log L)^{1.4}}
\end{eqnarray*}
for sufficiently large $L$, which is the first part of Lemma~\ref{lac_lemma3}.\\

In the same way we can deduce the second conclusion of Lemma~\ref{lac_lemma3} from a combination of the second conclusion of Lemma~\ref{lac_lemma1a} and Lemma~\ref{lac_lemma2}.\\
\end{proof}

\begin{proof}[Proof of Theorem~\ref{lac_co1}]
 From Lemma~\ref{lac_lemma1a} and Lemma~\ref{lac_lemma2} we can easily deduce Theorem~\ref{lac_co1}, using standard methods. Let us first assume that $f$ is either even or odd. For $m \geq 1$, let $E_m$ denote the sets defined by 
 $$
 E_{m} = \left\{ \alpha \in (0,1):~ \max_{1 \leq M \leq 2^m} \left| \sum_{\ell=1}^M f(s_\ell \alpha) \right| >  \frac{30 q}{q-1} \sqrt{2^m \log \log 2^m} \right\}.
 $$
 Then by Lemma~\ref{lac_lemma4} we have
 \begin{eqnarray}
 \p (E_m) & \leq & \frac{49}{\left(\log 2^m\right)^{1.4}} \label{prob2}
 \end{eqnarray}
 which implies that
 $$
 \sum_{m=1}^\infty \p (E_m) < \infty.
 $$
 Thus by the Borel--Cantelli lemma with probability 1 only finitely many events $E_m$ happen; in other words, for almost all $\alpha \in (0,1)$ we have
 $$
 \max_{1 \leq M \leq 2^m} \left| \sum_{\ell=1}^M f(s_\ell \alpha) \right|  \leq \frac{30 q}{q-1} \sqrt{2^m \log \log 2^m} \qquad \textrm{for $m \geq m_0(\alpha)$}.
 $$
As a consequence for almost all $\alpha \in (0,1)$ we have
\begin{equation} \label{lilf}
\limsup_{L \to \infty} \frac{\left| \sum_{\ell=1}^L f(s_\ell \alpha) \right|}{\sqrt{L \log \log L}} \leq \sqrt{2} \frac{30 q}{q-1},
\end{equation}
which proves Theorem~\ref{lac_co1} in the case when $f$ is either even or odd. For general $f$ we apply~\eqref{lilf} to the even and odd part separately, which results in an additional multiplicative factor of 2. Note that $2 \sqrt{2} \cdot 30 \leq 85$.\\

In the same way we can use the second conclusion of Lemma~\ref{lac_lemma3} to obtain
\begin{equation} \label{lilf2}
\limsup_{L \to \infty} \frac{\left| \sum_{\ell=1}^L f(s_\ell \alpha) \right|}{\sqrt{L \log \log L}} \leq \underbrace{2 \sqrt{2} \cdot 43}_{\leq 122} \|p\|_2^{1/4} \frac{\sqrt{q}}{\sqrt{q}-1},
\end{equation}
for almost all $\alpha \in (0,1)$. Theorem~\ref{lac_co1} now follows from a combination of~\eqref{lilf} and~\eqref{lilf2}.\\
\end{proof}

\section{Exponential Sums and Trigonometric Products: Proofs of Theorem~\ref{th_thue} and Theorem~\ref{th_theorem2}} \label{sect_proof}

Theorem~\ref{th_thue} and Theorem~\ref{th_theorem2} will follow easily from the following lemma, which is a version of Lemma~\ref{lac_lemma4} in the case when the function $f$ only satisfies~\eqref{f_coeff_b} (instead of being H\"older-continuous). We state it only for the special case of the two functions $f_1$ and $f_2$ from~\eqref{f_1} and~\eqref{f_2}.

\begin{lemma} \label{lemmath1}
We have
$$
\limsup_{L \to \infty} \frac{\sum_{\ell=0}^{L-1} f_1(2^\ell \alpha)}{\sqrt{2 L \log \log L}} = \frac{\pi}{\sqrt{2}}
$$
and
$$
\limsup_{L \to \infty} \frac{\left|  \sum_{\ell=0}^{L-1} f_2(2^\ell \alpha)\right|}{\sqrt{2 L \log \log L}} = 0
$$
for almost all $\alpha$.
\end{lemma}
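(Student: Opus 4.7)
The two statements call for very different arguments.

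For the $f_2$-part I would first exploit the double-angle identity $2\sin 2\pi u = (2\sin\pi u)(2\cos\pi u)$, which in logarithmic form reads $f_1(2u) = f_1(u) + f_2(u)$, hence $f_2(u) = f_1(2u) - f_1(u)$. The lacunary sum therefore telescopes:
$$
\sum_{\ell=0}^{L-1} f_2(2^\ell\alpha) \;=\; f_1(2^L\alpha) - f_1(\alpha).
$$
It then suffices to show that $|f_1(2^L\alpha)| = o(\sqrt{L\log\log L})$ almost surely. Because $|f_1|$ has only a logarithmic singularity, the Lebesgue measure of $\{\alpha:|2\sin\pi 2^L\alpha| < L^{-2}\}$ is $O(L^{-2})$, which is summable, so Borel--Cantelli gives the much stronger bound $|f_1(2^L\alpha)| \leq 2\log L$ eventually, a.s.

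For the $f_1$-part the key difficulty is that $f_1$ is unbounded and not H\"older-continuous, so Lemma~\ref{lac_lemma4} does \emph{not} apply to $f_1$ directly. I would approximate: set $p_N(\alpha) = -\sum_{k=1}^N k^{-1}\cos(2\pi k\alpha)$ (a trigonometric polynomial, hence Lipschitz and a fortiori H\"older) and $r_N := f_1 - p_N$. Lemma~\ref{lac_lemma4} applies to $p_N$ with $s_\ell = 2^\ell$ and delivers
$$
\limsup_{L\to\infty}\frac{\sum_{\ell=0}^{L-1} p_N(2^\ell\alpha)}{\sqrt{2L\log\log L}} \;=\; \sigma_{p_N} \quad\text{a.s.}
$$
The remainder $r_N$ still obeys $|a_j(r_N)| \leq 1/j$, so Theorem~\ref{lac_co1} is applicable (with $q = 2$) and yields
$$
\limsup_{L\to\infty}\frac{\bigl|\sum_{\ell=0}^{L-1} r_N(2^\ell\alpha)\bigr|}{\sqrt{L\log\log L}} \;\leq\; C \,\|r_N\|_2^{1/4} \quad\text{a.s.,}
$$
for an absolute constant $C$. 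Combining the two estimates via the triangle inequality and sending $N\to\infty$---along which $\|r_N\|_2 \to 0$ and $\sigma_{p_N}\to\sigma_{f_1}$ by dominated convergence applied to the series $\sigma_f^2 = \int f^2 + 2\sum_{d\geq 1}\int f(\alpha)f(2^d\alpha)\,d\alpha$---gives
$$
\limsup_{L\to\infty}\frac{\sum_{\ell=0}^{L-1} f_1(2^\ell\alpha)}{\sqrt{2L\log\log L}} \;=\; \sigma_{f_1}\quad\text{a.s.}
$$

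The final step is the explicit computation of $\sigma_{f_1}$. Using the Fourier expansion $f_1(\alpha) = -\sum_k k^{-1}\cos(2\pi k\alpha)$ and Parseval, the only surviving terms in $\int f_1(\alpha)f_1(2^d\alpha)\,d\alpha$ come from the matching-frequency relation $j_1 = 2^d j_2$, giving $\pi^2/(12\cdot 2^d)$ for $d\geq 1$. Summing this geometric series in the variance identity produces $\sigma_{f_1}^2 = \pi^2/2$, i.e.\ $\sigma_{f_1} = \pi/\sqrt{2}$, as claimed.

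\emph{Main obstacle.} The orchestration of the two limits is the delicate point: Lemma~\ref{lac_lemma4} furnishes an \emph{equality} for the smooth piece $p_N$, while Theorem~\ref{lac_co1} only provides a one-sided bound for the rough piece $r_N$. The argument works because the prefactor $\|r_N\|_2^{1/4}$ in that one-sided bound vanishes as $N\to\infty$, allowing one to sandwich $\limsup\sum_{\ell=0}^{L-1} f_1(2^\ell\alpha)/\sqrt{2L\log\log L}$ between $\sigma_{p_N}\pm o_N(1)$ and then pass to the limit.
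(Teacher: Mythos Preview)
Your treatment of $f_1$ is essentially the paper's proof: split $f_1$ into a trigonometric partial sum $p_N$ (to which Lemma~\ref{lac_lemma4} applies) plus a remainder $r_N$ (controlled by Theorem~\ref{lac_co1} through the factor $\|r_N\|_2^{1/4}$), then let $N\to\infty$. The paper argues identically, including the sandwich $\sigma_{p_N}\pm o_N(1)$ and the final Fourier computation of $\sigma_{f_1}$.

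Your treatment of $f_2$, however, is genuinely different and more elementary than the paper's. The paper runs the \emph{same} approximation scheme for $f_2$ as for $f_1$ --- partial sum plus remainder, Lemma~\ref{lac_lemma4} plus Theorem~\ref{lac_co1} --- and then computes $\sigma_{f_2}=0$ via an alternating-sign cancellation in the Fourier coefficients. You instead exploit the double-angle identity $f_2(u)=f_1(2u)-f_1(u)$, which collapses the whole lacunary sum to the boundary term $f_1(2^L\alpha)-f_1(\alpha)$; a one-line Borel--Cantelli then gives the much stronger bound $O(\log L)$ a.s. This telescoping is in fact the logarithmic form of the product identity~\eqref{BL3}, which the paper records and uses elsewhere (for the $I_2(L)$ estimate) but does not invoke in the proof of this lemma. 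Your route avoids the heavy machinery entirely for $f_2$ and yields a sharper pointwise bound; the paper's route has the virtue of being uniform (both functions handled by one mechanism) and of exhibiting $\sigma_{f_2}=0$ as an explicit Fourier identity.
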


\begin{proof}[Proof of Lemma~\ref{lemmath1}]
As already noted, the functions $f_1$ and $f_2$ satisfy conditions~\eqref{f} and~\eqref{f_coeff_b}. Let a number $d$ be given, and write $p_1$ for the $d$-th partial sum of the Fourier series of $f_1$, and $r_1$ for the remainder term. Then by Lemma~\ref{lac_lemma4} we have
\begin{equation} \label{LILp}
\limsup_{L \to \infty} \frac{\sum_{\ell=0}^{L-1} p_1(2^\ell \alpha)}{\sqrt{2 L \log \log L}} = \sigma_{p_1} \qquad \textrm{for almost all $\alpha$}
\end{equation}
where $\sigma_{p_1}$ is defined according to~\eqref{sigmap} (for the function $p_1$). Note that Lemma~\ref{lac_lemma4} is applicable since $p$ is a trigonometric polynomial (and consequently also is Lipschitz-continuous). Note furthermore that by~\eqref{f_coeff_b} we have
$$
\|r_1\|_2 \leq \left( \sum_{j=1}^{d+1} \frac{1}{j^2} \right)^{1/2} \leq d^{-1/2}.
$$
Consequently by Theorem~\ref{lac_co1} we have
\begin{equation} \label{LILr}
\limsup_{L \to \infty} \frac{\left| \sum_{\ell=0}^{L-1} r_1(2^\ell \alpha) \right|}{\sqrt{2 L \log \log L}} \leq 122 d^{-1/8} \frac{\sqrt{2}}{\sqrt{2}-1}  \qquad \textrm{for almost all $\alpha$}.
\end{equation}
Replacing $f_1$ by $f_2$ and replacing $p_1$ and $r_1$ by $p_2$ and $r_2$, respectively, we obtain~\eqref{LILp} and~\eqref{LILr} with $p_1$ and $r_1$ replaced by $p_2$ and $r_2$, respectively. Some standard calculations show that 
$$
\sigma_{p_1} \to \sigma_{f_1} \qquad \textrm{and} \qquad \sigma_{p_2} \to \sigma_{f_2} \qquad \textrm{as $d \to \infty$}.
$$
Furthermore, the expression on the right-hand side of~\eqref{LILr} clearly tends to zero as $d \to \infty$. Thus, overall we have
\begin{equation} \label{LILf1}
\limsup_{L \to \infty} \frac{\sum_{\ell=0}^{L-1} f_1(2^\ell \alpha)}{\sqrt{2 L \log \log L}} = \sigma_{f_1} \qquad \textrm{for almost all $\alpha$}
\end{equation}
and
\begin{equation} \label{LILf2}
\limsup_{L \to \infty} \frac{\sum_{\ell=0}^{L-1} f_2(2^\ell \alpha)}{\sqrt{2 L \log \log L}} = \sigma_{f_2} \qquad \textrm{for almost all $\alpha$},
\end{equation}
where $\sigma_{f_1}$ and $\sigma_{f_2}$ are defined according to~\eqref{sigmap}. Calculating the values of $\sigma_{f_1}$ and $\sigma_{f_2}$ is a simple exercise, using the Fourier series expansion of $f_1$ and $f_2$, respectively; it turns out that in our specific setting we have
\begin{equation} \label{LILf3}
\sigma_{f_1}^2 = \sum_{j=1}^{\infty} \left(\frac{1}{j^2} + 2 \sum_{r=1}^\infty \frac{1}{2^r j^2}\right) = \frac{\pi^2}{2}
\end{equation}
and
\begin{equation} \label{LILf4}
\sigma_{f_2}^2 = \sum_{j=1}^{\infty} \left(\frac{1}{j^2} + 2 \sum_{r=1}^\infty \underbrace{(-1)^j (-1)^{2^r j}}_{=(-1)^j} \frac{1}{2^r j^2}\right) = 0.
\end{equation} 
By applying the same arguments to $-f_1$ and $-f_2$ instead of $f_1$ and $f_2$ we can get absolute values in~\eqref{LILf1} and~\eqref{LILf2}, if we wish. This proves Lemma~\ref{lemmath1}.\\
\end{proof}

\begin{proof}[Proofs of Theorem~\ref{th_thue} and Theorem~\ref{th_theorem2}]~\\
\emph{Part 1: upper bounds.} \quad By periodicity it is obviously sufficient to prove Theorem~\ref{th_thue} for $h=1$. Let $\ve>0$ and $\alpha$ be given, and set $\hat{\ve} = \ve/2$. We will assume that $\alpha$ is an element of the set of full measure for which the conclusion of Lemma~\ref{lemmath1} holds. Then we have
\begin{equation} \label{th1f1}
\left| \sum_{\ell=0}^{L} f_1(2^\ell \alpha) \right| \leq \left(\frac{\pi}{\sqrt{2}}+\hat{\ve} \right)\sqrt{2 L \log \log L}
\end{equation}
and
\begin{equation} \label{th1f2}
\left| \sum_{\ell=0}^{L} f_2(2^\ell \alpha) \right| \leq \hat{\ve} \sqrt{2 L \log \log L}
\end{equation}
for all $L \geq L_0(\alpha)$. Equation~\eqref{th1f1} already gives the upper bound in Theorem~\ref{th_theorem2}. To obtain the upper bound in Theorem~\ref{th_thue}, let $N$ be given, and assume that $N \geq 2^{(2^{L_0})}$. We can write 
$$
N = \eta_M 2^M + \eta_{M-1} 2^{M-1} + \dots + \eta_1 2 + \eta_0
$$
for numbers $(\eta_M, \dots, \eta_0) \in \{0,1\}^{M+1}$, where we assume that $M$ is chosen in such a way that $\eta_M=1$; this is simply the binary representation of $N$. For simplicity of writing we set $N_{M+1}=0$ and 
$$
N_\mu = \eta^M 2^M + \eta_{M-1} 2^{M-1} + \dots + \eta_{\mu} 2^{\mu}, \qquad 0 \leq \mu \leq M.
$$
Then clearly we have
\begin{eqnarray} \label{sumN}
\sum_{k=1}^N e^{2 \pi i n_k \alpha} = \sum_{\mu=1}^{M+1} ~\eta_{\mu-1} \sum_{k= N_{\mu}+ 1}^{N_{\mu-1}} e^{2 \pi i n_k \alpha} = \sum_{\mu=1}^{M+1} ~\eta_{\mu-1} \sum_{k= 1}^{2^{\mu-1}} e^{2 \pi i n_{N_\mu+k} \alpha}.
\end{eqnarray}
Note that for the ``odious numbers'' $m_k$ we have, for every $k$, that
$$
m_k = \left\{ \begin{array}{ll} 2k-1 & \textrm{if $n_k = 2k-2$},\\ 2k-2 & \textrm{if $n_k=2k-1$.} \end{array} \right.
$$
Furthermore, from the special structure of the Thue--Morse sequence we see that for $1 \leq k \leq 2^{\mu-1}$ we have
\begin{equation} \label{nnmu}
n_{N_{\mu}+k} = \left\{ \begin{array}{ll} 2 N_{\mu} + n_k & \textrm{if $s_2(N_\mu)=0$},\\ 2 N_{\mu} + m_k & \textrm{if $s_2(N_\mu)=1,$}\end{array}\right.
\end{equation}
which together with~\eqref{lacproducts},~\eqref{lacproducts2},~\eqref{prod_lac_1} and~\eqref{prod_lac_2} implies that
$$
\left|\sum_{k= 1}^{2^{\mu-1}} e^{2 \pi i n_{N_\mu+k} \alpha}\right| \leq \left(\exp \left(\sum_{\ell=0}^{\mu-1} f_1(2^\ell \alpha) \right) + \exp \left(\sum_{\ell=0}^{\mu-1} f_2(2^\ell \alpha) \right) \right), \qquad 1 \leq \mu \leq M+1.
$$
Thus by~\eqref{sumN} and~\eqref{nnmu} we have
\begin{eqnarray}
\left|\sum_{k=1}^N e^{2 \pi i n_k \alpha}\right| & \leq & \sum_{\mu=1}^{M+1}  \left|\sum_{k= 1}^{2^{\mu-1}} e^{2 \pi i n_{N_\mu+k} \alpha}\right| \label{continue}\\
& \leq & 4 M + \sum_{\mu=\lceil \log_2 M \rceil}^{M+1}  \left|\sum_{k= 1}^{2^{\mu-1}} e^{2 \pi i n_{N_\mu+k} \alpha}\right| \nonumber\\
& \leq & 4 M + \sum_{\mu=\lceil \log_2 M \rceil}^{M+1}  \left(\exp \left(\sum_{\ell=0}^{\mu-1} f_1(2^\ell \alpha) \right) + \exp \left(\sum_{\ell=0}^{\mu-1} f_2(2^\ell \alpha) \right) \right). \label{co1}
\end{eqnarray}
Using the fact that the assumption $N \geq 2^{(2^{L_0})}$ implies that $\log_2 M \geq L_0$, and also using the inequalities~\eqref{th1f1} and~\eqref{th1f2}, we obtain
\begin{eqnarray*}
\sum_{\mu=\lceil \log_2 M \rceil}^{M+1}  \exp \left(\sum_{\ell=0}^{\mu-1} f_1(2^\ell \alpha) \right) & \leq & \sum_{\mu=\lceil \log_2 M \rceil}^{M+1}  \exp \left(\left(\frac{\pi}{\sqrt{2}}+\hat{\ve} \right)\sqrt{2 (\mu-1) \log \log (\mu-1)} \right) \\
& \leq & M \exp \left(\left(\frac{\pi}{\sqrt{2}}+\hat{\ve} \right)\sqrt{2 M \log \log M} \right) \label{co2}
\end{eqnarray*}
and
\begin{eqnarray*}
\sum_{\mu=\lceil \log_2 M \rceil}^{M+1}  \exp \left(\sum_{\ell=0}^{\mu-1} f_2(2^\ell \alpha) \right) & \leq & \sum_{\mu=\lceil \log_2 M \rceil}^{M+1}   \exp \left(\hat{\ve} \sqrt{2 (\mu-1) \log \log (\mu-1)} \right) \\
& \leq & M \exp \left(\hat{\ve} \sqrt{2 M \log \log M} \right). \label{co3}
\end{eqnarray*}
Combining~\eqref{co1},~\eqref{co2} and~\eqref{co3} we obtain 
\begin{eqnarray*}
\left|\sum_{k=1}^N e^{2 \pi i n_k \alpha}\right| & \leq & 4M + 2M \exp \left(\left(\frac{\pi}{\sqrt{2}}+\hat{\ve} \right)\sqrt{2 M \log \log M} \right) \\
& \leq & 4M + 2M \exp \left(\left(\frac{\pi}{\sqrt{2}}+\hat{\ve} \right)\sqrt{2 (\log_2 N) \log \log (\log_2 N)} \right).
\end{eqnarray*}
As a consequence we have
$$
\left|\sum_{k=1}^N e^{2 \pi i n_k \alpha}\right| \leq \exp \left(\left(\frac{\pi}{\sqrt{\log 2}} +\ve \right)\sqrt{(\log N) \log \log \log N} \right)
$$
for all sufficiently large $N$. This proves the upper bound in Theorem~\ref{th_thue}.\\

\emph{Part 2: lower bounds.} \quad Now we prove the lower bound in Theorems~\ref{th_thue} and~\ref{th_theorem2}. Again we assume that $h=1$, that $\alpha$ and $\ve>0$ are fixed, and that $\alpha$ is from the set of full measure for which the conclusion of Lemma~\ref{lemmath1} holds. Again we set $\hat{\ve} = \ve/2$. Then by Lemma~\ref{lemmath1} there exist infinitely many values of $L$ for which both inequalities
$$
\sum_{\ell=0}^{L} f_1(2^\ell \alpha) \geq \left(\frac{\pi}{\sqrt{2}} - \hat{\ve}\right) \sqrt{2 L \log \log L}
$$
and
$$
\sum_{\ell=0}^{L} f_2(2^\ell \alpha) \leq \hat{\ve} \sqrt{2 L \log \log L}
$$
hold simultaneously. The first of the two relations already gives the lower bound in Theorem~\ref{th_theorem2}. By~\eqref{lacproducts} and~\eqref{prod_lac_1},~\eqref{prod_lac_2} we have
\begin{equation*} 
\left| \sum_{k=1}^{2^L} e^{2 \pi i n_k \alpha} \right| \geq \frac{1}{2} \exp \left(\sum_{\ell=0}^{L} f_1(2^\ell \alpha) \right) - \frac{1}{2} \exp\left(\sum_{\ell=0}^{L} f_2(2^\ell \alpha) \right).
\end{equation*}
Thus for infinitely many $L$ we have
\begin{equation} 
\left|\sum_{k=1}^{2^L} e^{2 \pi i n_k \alpha} \right| \geq \frac{1}{2} \exp \left(\left(\frac{\pi}{\sqrt{2}} - \hat{\ve}\right) \sqrt{2 L \log \log L}\right) - \frac{1}{2} \exp \left(\hat{\ve} \sqrt{2 L \log \log L}\right).
\end{equation}
Consequently we also have
$$
\left|\sum_{k=1}^{N} e^{2 \pi i n_k \alpha} \right| \geq \exp \left(\left(\frac{\pi}{\sqrt{\log 2}} - \ve \right) \sqrt{(\log N) \log \log \log N}\right)
$$
for infinitely many $N$. This proves the lower bound in Theorem~\ref{th_thue}.\\
\end{proof}

\section{Discrepancy of Thue--Morse--Kronecker sequences: Proof of Theorem~\ref{th_thue2}} \label{sect_proof_th2}

For given $L \geq 1$, we set
$$
I_1 (L) = \int_0^1 \left( \prod_{\ell=0}^{L-1} \left|2 \sin (\pi 2^\ell \alpha)\right| \right)~d\alpha
$$
and
$$
I_2 (L) = \int_0^1 \left( \prod_{\ell=0}^{L-1} \left|2 \cos(\pi 2^\ell \alpha)\right| \right)~d\alpha.
$$
Integrals of this type have been studied in great detail in~\cite{foumau}. For the integral $I_1$ it is proved there that
\begin{equation} \label{0664}
2^{-L} I_1(L) = \kappa \lambda^{L} \left(1+o(1)\right)
\end{equation}
where $\kappa,\lambda$ are positive constants with $0.654336 \leq \lambda \leq 0.663197$ (thereby improving an earlier result of {\`E}minyan~\cite{emin}).\\
Hence for every $\ve > 0$ for L large enough we have
\begin{equation} \label{0664b}
I_1 (L) \leq \left(2^{L}\right)^{1+\log_{2} \lambda + \varepsilon}.
\end{equation}

We will improve the estimate given for $\lambda$ by Fouvry and Mauduit in the following.
\begin{lemma} \label{lem7}
Let $\lambda$ be defined as in~\eqref{0664}. Then $0.66130 < \lambda < 0.66135.$
\end{lemma}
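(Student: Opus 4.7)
The plan is to work with the Ruelle transfer operator for the doubling map $T\colon\alpha\mapsto 2\alpha\pmod 1$ weighted by $g(\alpha)=|\sin\pi\alpha|$. Iterating the duality relation $\int h(\alpha)\,\psi(T\alpha)\,d\alpha = \int(\mathcal L_0 h)(\beta)\,\psi(\beta)\,d\beta$, where $\mathcal L_0 h(\beta)=\tfrac12[h(\beta/2)+h((\beta+1)/2)]$ is the standard Perron--Frobenius operator of the doubling map, one obtains
\[
2^{-L} I_1(L) \;=\; \int_0^1 \prod_{\ell=0}^{L-1}|\sin\pi 2^\ell\alpha|\,d\alpha \;=\; \int_0^1 (\mathcal L^L\mathbf 1)(y)\,dy,
\]
where
\[
(\mathcal L f)(y) \;=\; \tfrac12\sin(\pi y/2)\,f(y/2) + \tfrac12\cos(\pi y/2)\,f((y+1)/2).
\]
By the Ruelle--Perron--Frobenius theorem for such weighted operators (this is the mechanism underlying~\eqref{0664} in~\cite{foumau}), $\mathcal L$ is quasi-compact on a suitable space of regular functions and has a simple leading eigenvalue, which coincides with the constant $\lambda$ of~\eqref{0664} and admits a strictly positive continuous eigenfunction $\phi$.

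Given this identification, I would tighten the bounds on $\lambda$ via the Collatz--Wielandt sandwich: for any strictly positive continuous trial function $\psi$,
\[
\inf_{y\in[0,1]}\frac{(\mathcal L\psi)(y)}{\psi(y)} \;\leq\; \lambda \;\leq\; \sup_{y\in[0,1]}\frac{(\mathcal L\psi)(y)}{\psi(y)}.
\]
A convenient choice is $\psi=\mathcal L^M\mathbf 1$ for a moderately large integer $M$: by the spectral gap, $\lambda^{-M}\psi$ converges geometrically to a positive multiple of $\phi$, so the extremal ratios above converge to $\lambda$ at a geometric rate. In practice one represents $\psi$ on a sufficiently fine dyadic partition of $[0,1]$, computes $(\mathcal L\psi)(y)$ using the explicit formula, and encloses the ratio $(\mathcal L\psi)/\psi$ on each atom by rigorous interval arithmetic. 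Back-of-the-envelope estimates suggest $M \approx 10$ combined with a partition of depth $\approx 20$ already push both bounds safely into the target window $(0.66130,0.66135)$.

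The principal obstacle is that $\mathcal L$ is not uniformly positive: the factor $\sin(\pi y/2)$ vanishes at the $T$-fixed point $y=0$, so the ratio $(\mathcal L\psi)/\psi$ near $y=0$ is sensitive to the boundary behaviour of $\psi$. One must verify that $\psi(0)>0$ (which is automatic for $\psi=\mathcal L^M\mathbf 1$, since $(\mathcal L f)(0)=f(1/2)/2$ propagates strict positivity through iterates) and must control the infimum and supremum of the ratio uniformly on a neighbourhood of $0$ rather than only at sample points. A secondary constraint is that the width $5\cdot 10^{-5}$ of the target interval is tight, so one must keep the interval-arithmetic rounding budget sharper than this tolerance; nevertheless, the resulting rigorous computation is elementary and short. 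A purely analytic alternative, avoiding interval arithmetic, is to argue directly with an explicit piecewise-smooth trial function $\psi$ modelled on the known qualitative behaviour of $\phi$ and to bound $(\mathcal L\psi)/\psi$ by closed-form monotonicity arguments on each of a handful of subintervals of $[0,1]$.
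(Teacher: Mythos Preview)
Your approach is essentially the paper's: the operator $\mathcal L$ is exactly the recursion $\phi_j\mapsto\phi_{j+1}$ from Fouvry--Mauduit, and your Collatz--Wielandt ratio $(\mathcal L\psi)/\psi$ with $\psi=\mathcal L^M\mathbf 1$ is precisely the paper's quantity $q_{M+1}=\phi_{M+1}/\phi_M$. The only substantive difference is in the justification of the sandwich $m_k\le\lambda\le M_k$: rather than invoking Ruelle--Perron--Frobenius and Collatz--Wielandt abstractly, the paper gives a short elementary argument that $\max_\alpha q_j(\alpha)$ is nonincreasing and $\min_\alpha q_j(\alpha)$ is nondecreasing in $j$ (a weighted-mean inequality using the recursion), from which the bound follows directly by writing $\int\phi_L=\int\prod_{j\le L}q_j$; it then carries out the numerics at $k=6$ via an explicit derivative bound $|q_6'|\le 56.4$ and evaluation on a grid of mesh $1/2800000$, which plays the role of your proposed interval arithmetic.
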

\begin{proof}
By the formula above of equation $(4.2)$ in~\cite{foumau} we have 
$$
\int^{1}_{0} \prod^{L-1}_{\ell=0} \left|\sin \pi 2^{\ell} \alpha \right|d\alpha = \int^{1}_{0} \phi_{j} (\alpha) \prod^{L-j-1}_{\ell=0} \left|\sin \pi 2^{\ell} \alpha\right| d\alpha
$$
where $\phi_{0} (\alpha) \equiv 1$ and
$$
\phi_{j+1} (\alpha) = \frac{1}{2} \left(\left|\sin \pi \frac{\alpha}{2}\right| \phi_{j} \left(\frac{\alpha}{2}\right) + \left|\cos \pi \frac{\alpha}{2}\right| \phi_{j} \left(\frac{\alpha+1}{2}\right)\right).
$$
Furthermore, it was shown in~\cite{foumau} that the functions $\phi_{j}$ are symmetric around $\alpha= \frac{1}{2}$ on $\left[0,1\right]$, and that they are concave on $\left[0,1\right]$. Hence $\phi_{j}(0)= \underset{\alpha \in \left[0,1\right]}{\min} \phi_{j} (\alpha)$ and
$$
\phi_{j} \left(\frac{1}{2}\right) = \underset{\alpha \in \left[0,1\right]}{\max} \phi_{j} (\alpha).
$$
Let $q_{j} (\alpha) := \frac{\phi_{j} (\alpha)}{\phi_{j-1} (\alpha)}$ and $m_{j} := \underset{\alpha \in \left[0,1\right]}{\min} q_{j} (\alpha), M_{j} := \underset{\alpha \in \left[0,1\right]}{\max} q_{j} (\alpha).$\\
Note that $q_{j} (\alpha)$ of course also is symmetric around $\alpha=\frac{1}{2}$ in $\left[0,1\right]$. We have for every $\alpha in [0,1]$
\begin{eqnarray*}
q_{j+1} (\alpha) & = & \frac{\phi_{j+1} (\alpha)}{\phi_{j} (\alpha)} \\
& = & \frac{\left|\sin \pi \frac{\alpha}{2}\right| \phi_{j} \left(\frac{\alpha}{2}\right) + \left|\cos \pi \frac{\alpha}{2}\right| \phi_{j} \left(\frac{\alpha+1}{2}\right)}{\left|\sin \pi \frac{\alpha}{2}\right| \phi_{j-1} \left(\frac{\alpha}{2}\right) + \left|\cos \pi \frac{\alpha}{2}\right| \phi_{j-1} \left(\frac{\alpha+1}{2}\right)}  \\
& \leq & \frac{\left|\sin \pi \frac{\alpha}{2}\right|M_{j} \phi_{j-1} \left(\frac{\alpha}{2}\right)+ \left|\cos \pi \frac{\alpha}{2}\right| M_{j} \phi_{j-1} \left(\frac{\alpha+1}{2}\right)}{\left|\sin \pi \frac{\alpha}{2}\right| \phi_{j-1} \left(\frac{\alpha}{2}\right) + \left|\cos \pi \frac{\alpha}{2}\right| \phi_{j-1} \left(\frac{\alpha+1}{2}\right)} \\
& = & M_{j}.
\end{eqnarray*}
Therefore $M_{j+1}=\underset{\alpha \in \left[0,1\right]}{\max} q_{j} (\alpha)\leq M_j$. \\
Analogously we obtain $q_{j+1} (\alpha) \geq m_{j}$ for all $\alpha \in \left[0,1\right]$. Altogether $M_{1} \geq M_{2} \geq M_{3} \geq \ldots$ and $m_{1} \leq m_{2} \leq m_{3} \leq \ldots$, and therefore for every $k$ fixed we have
\begin{eqnarray*}
\int^{1}_{0} \prod^{L-1}_{\ell=0} \left|\sin \pi 2^{\ell} \alpha\right| d\alpha & = & \int^{1}_{0} \phi_{L} (\alpha) d\alpha \\
& = & \int^{1}_{0} \prod^{L}_{j=1} q_{j} (\alpha) d\alpha \\ 
& \leq & M_{k}^{L-k} \int^{1}_{0} q_{1} (\alpha) \ldots q_{k-1} (\alpha) ~d\alpha.
\end{eqnarray*}
Similarly we get
$$
\int^{1}_{0} \prod^{L-1}_{\ell=0} \left|\sin \pi 2^{\ell} \alpha\right|d\alpha \geq m_{k}^{L-k} \int^{1}_{0} q_{1} (\alpha) \ldots q_{k-1} (\alpha) ~d\alpha.
$$
Hence
\begin{equation*} 
m_{k}^{L-k} \int^{1}_{0} q_{1} (\alpha) \ldots q_{k-1} (\alpha) ~d\alpha \leq \kappa \lambda^{L} \left(1+o(1)\right) \leq M_{k}^{L-k} \int^{1}_{0} q_{1} (\alpha) \ldots q_{k-1} (\alpha) ~d\alpha 
\end{equation*}
and consequently 
\begin{equation}\label{mkmk}
m_{k} \leq \lambda \leq M_{k} \qquad \textrm{for all $k \geq 1$.} 
\end{equation}
By considering the function $q_6(\alpha)$, in the following we will prove that $m_{6} > 0.6613$ and $M_{6} < 0.66135$.\\
\begin{center}
\includegraphics[angle=0,width=70mm]{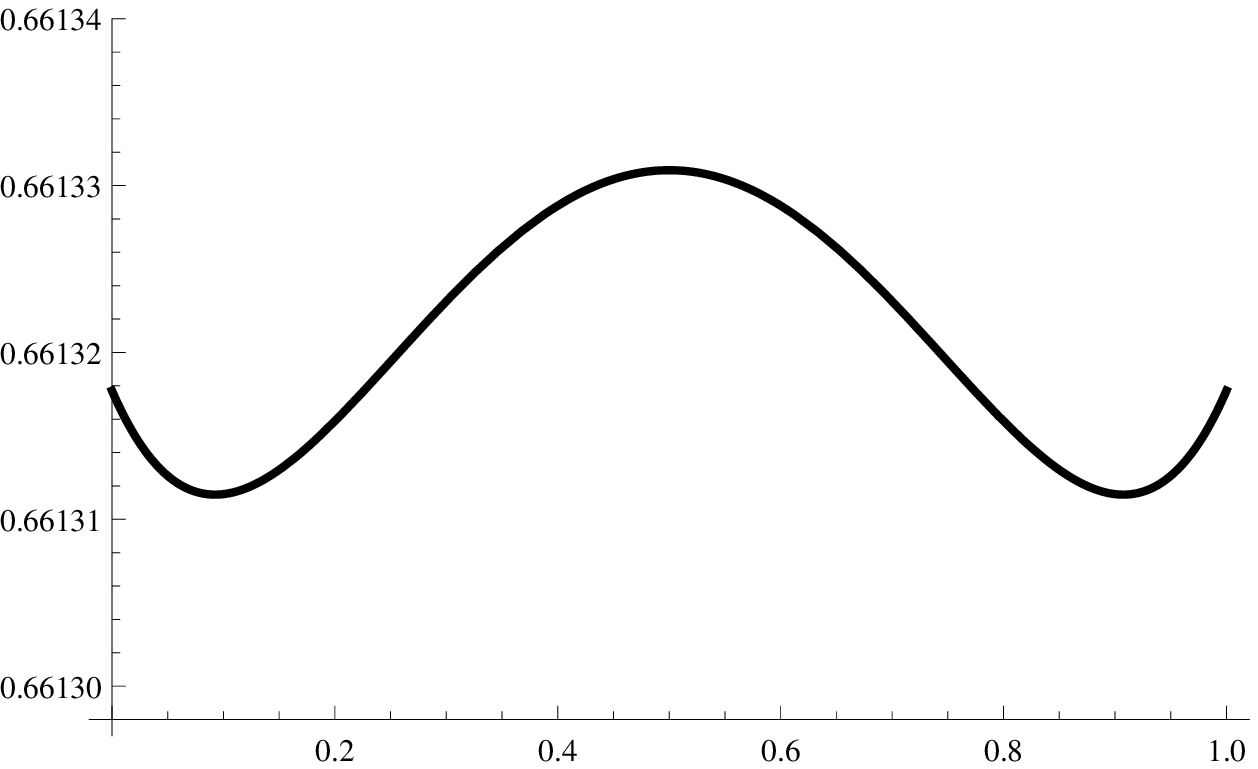}\\
Figure 2: the function $q_6(\alpha)$.
\end{center}

Wherever $q_{6}$ is differentiable we have
\begin{eqnarray}
\left|{q_{6}}' (\alpha)\right| & = & \left|\frac{{\phi_{6}}' (\alpha) \phi_{5} (\alpha) - \phi_{6} (\alpha) {\phi_{5}}' (\alpha)}{\left(\phi_{5}(\alpha)\right)^{2}}\right| \nonumber\\
& \leq & \frac{\underset{\alpha \in \left[0,1\right]}{\max} \left|{\phi_{6}}' (\alpha)\right|}{\phi_{5}(0)} + \frac{\phi_{6} \left(\frac{1}{2}\right)}{\left(\phi_{5} (0)\right)^{2}} \underset{\alpha \in \left[0,1\right]}{\max} \left|{\phi_{5}}'(\alpha)\right|. \label{phiequ}
\end{eqnarray}
It can easily be checked for example by differentiating $\phi_{5} (\alpha)$ and $\phi_{6}(\alpha)$ with the help of Mathematica that ${\phi_{5}}' (\alpha)$ is the sum resp. difference of\\
32 products of absolute values of sines and cosines, each product weighted by a factor $\frac{\pi}{1024}$ and further\\
32 such products weighted by $\frac{\pi}{512}$, further\\
32 with weight $\frac{\pi}{256}$, further\\
32 with weight $\frac{\pi}{128}$ and finally further\\
32 products with factor $\frac{\pi}{64}$.\\
Hence
$$\left|{\phi_{5}}' (\alpha)\right| \leq 32 \pi \left(\frac{1}{1024}+ \frac{1}{512} + \frac{1}{256} + \frac{1}{128} + \frac{1}{64}\right)=$$
$$=\frac{31}{32} \pi.$$
In the same way we show that also $\left|{\phi_{6}}'(\alpha)\right|\leq \frac{31}{32} \pi$. By combining these estimates with the values of $\phi_5(0)$ and $\phi_6(1/2)$ in~\eqref{phiequ}, we finally obtain  $\left|{q_{6}}' (\alpha)\right|\leq 56.4$.\\

Now we calculate
$$
q_{6} \left(\frac{a}{2800000}\right) \mbox{for}~ a=0,1, \ldots, 1400000
$$
with the help of Mathematica and obtain
$$
\max\left\{q_{6} \left(\frac{a}{2800000}\right) \bigg\vert a=0,1, \ldots, 1400000\right\} = 0.66133092 \ldots
$$
$$
\min\left\{q_{6} \left(\frac{a}{2800000}\right) \bigg\vert a=0,1, \ldots, 1400000\right\}= 0.66131148 \ldots
$$
Hence
$$
m_{6} \geq 0.66131145 - 56.4 \frac{1}{5600000} = 0.661301 \ldots
$$
and
$$
M_{6} \leq 0.66133092 + 56.4 \frac{1}{5600000} = 0.661341 \ldots
$$
By~\eqref{mkmk} this implies Lemma~\ref{lem7}.
\end{proof}
Let us remark that numerical experiments with $q_{15} (\alpha)$ suggest that $\lambda=0.661322602 \ldots.$ It is tempting to conjecture that the precise value of $\lambda$ can be expressed in a simple way in terms of the ``usual'' mathematical constants such as $e,\pi, \log 2$, etc. However, we do not know what such an expression could look like, and cannot even make a reasonable guess (the numerical argument in the proof of Lemma~\ref{lem7} does not give any hints).\\

\begin{proof}[Proof of Theorem~\ref{th_thue2}]~\\
\emph{Part 1: upper bound}\\

For the integral $I_2 (L)$ we can use the equality
\begin{equation} \label{BL3}
\prod_{\ell=0}^{L-1} \left| 2 \cos (\pi 2^\ell \alpha) \right| = \frac{\left| \sin \pi 2^{L} \alpha \right|}{\sin \pi \alpha} \leq \min \left(2^L, \frac{1}{\pi \alpha (1-\alpha)}\right)
\end{equation}
which holds for $0<\alpha<1$ and which implies that
\begin{eqnarray}
I_2 (L) & \leq & \int_0^1 \frac{\left| \sin \pi 2^{L} \alpha \right|}{\sin \pi \alpha} ~d\alpha \nonumber\\
& \leq & 2 \int_0^{2^{-L}} 2^L ~d\alpha + \int_{2^{-L}}^{1-2^{-L}} \frac{1}{\pi \alpha (1-\alpha)} ~d\alpha \nonumber\\
& \leq & 2 + \frac{L 2 \log 2}{\pi} \label{0664c}
\end{eqnarray}
(this is essentially a variant of the classical bound for the $L^{1}$-norm of the Dirichlet kernel).\\
For $\mu\geq 1$ and $\ve > 0$ we set

\begin{equation} \label{gmu}
G_\mu = \left( \alpha \in (0,1):~\sum_{h=1}^{2^{4\mu}} \frac{1}{h} \left| \sum_{k=1}^{2^{\mu-1}} e^{2 \pi i h n_k \alpha} \right| > (2^\mu)^{\lambda+\ve} \right).
\end{equation}
By~\eqref{lacproducts},~\eqref{0664b} and~\eqref{0664c} we have
\begin{eqnarray*}
\int_0^1 \sum_{h=1}^{2^{4\mu}} \frac{1}{h} \left| \sum_{k=1}^{2^{\mu-1}} e^{2 \pi i h n_k \alpha} \right| ~d\alpha & = & \sum_{h=1}^{2^{4\mu}} \frac{1}{h} \int_0^1 \left| \sum_{k=1}^{2^{\mu-1}}
e^{2 \pi i h n_k \alpha} \right| ~d\alpha \\
& \leq & \sum_{h=1}^{2^{4\mu}} \frac{1}{h} \left(I_1(\mu) + I_2(\mu)\right) \\
& \leq & (2^\mu)^{\lambda+\frac{\ve}{2}}
\end{eqnarray*}
for sufficiently large $\mu$. Consequently we have
$$
\p  (G_\mu) \leq (2^\mu)^{-\frac{\ve}{2}}.
$$
which implies that by the Borel--Cantelli lemma with probability one only finitely many events $G_\mu$ occur. We can show the same result if we replace the sequence $(n_k)_{k \geq 1}$ in~\eqref{gmu} by $(m_k)_{k \geq 1}$. In other words, for almost all $\alpha$ we have 
\begin{equation} \label{4095}
\sum_{h=1}^{2^{4\mu}} \frac{1}{h} \left| \sum_{k=1}^{2^{\mu-1}} e^{2 \pi i h n_k \alpha} \right| \leq (2^\mu)^{\lambda+\ve}
\end{equation}
and the same estimate for $(m_k)_{k \geq 1}$ instead of $(n_k)_{k \geq 1}$, for all $\mu \geq \mu_0 ( \alpha,\ve)$.\\

Now assume that $\alpha, \ve$ and $N$ are given. Furthermore we assume that for these values of $\alpha$ and $\ve$ the estimate~\eqref{4095} and the corresponding estimate for $(m_k)_{k \geq 1}$ instead of $(n_k)_{k \geq 1}$ hold for $\mu \geq \mu_0$, and that $N \geq 2^{(4\mu_0)}$. We apply the same dyadic decomposition of $N$ as in the proof of the upper bound of Theorem~\ref{th_thue} in Section~\ref{sect_proof}. In the same way as we obtained~\eqref{continue}, together with the Koksma-Erd\H{o}s-Tur\'an inequality we can now obtain with $M=\lfloor \log_2(N)\rfloor$
\begin{eqnarray}
& & N D_N^* (\{n_1 \alpha\}, \dots, \{n_N \alpha\}) \nonumber\\
& \leq & 1 + \sum_{h=1}^N \frac{1}{h} \left|\sum_{k=1}^N e^{2 \pi i h n_k \alpha}\right| \nonumber\\
& \leq & 1 + \sum_{h=1}^N \frac{1}{h} \sum_{\mu=1}^{M+1}  \left|\sum_{k= 1}^{2^{\mu-1}} e^{2 \pi i h n_{N_\mu+k} \alpha}\right| \nonumber\\
& \leq & 1 + \underbrace{ \sum_{\mu=1}^{\lceil (\log_2 N)/4\rceil} \sum_{h=1}^N \frac{1}{h} \left|\sum_{k= 1}^{2^{\mu-1}} e^{2 \pi i h n_{N_\mu+k} \alpha}\right|}_{\ll N^{1/4} \log N}  + \sum_{\mu=\lceil (\log_2 N)/4\rceil+1}^{M+1} \sum_{h=1}^N \frac{1}{h}  \left|\sum_{k= 1}^{2^{\mu-1}} e^{2 \pi i h n_{N_\mu+k} \alpha}\right|. \label{lastt}
\end{eqnarray}
For the last term in~\eqref{lastt} by~\eqref{nnmu} and~\eqref{4095} we have
\begin{eqnarray*}
\sum_{\mu=\lceil (\log_2 N)/4\rceil+1}^{M+1} \sum_{h=1}^N \frac{1}{h}  \left|\sum_{k= 1}^{2^{\mu-1}} e^{2 \pi i h n_{N_\mu+k} \alpha}\right| & \leq & \sum_{\mu=\lceil (\log_2 N)/4\rceil+1}^{M+1} ~\sum_{h=1}^{2^{4\mu}} \frac{1}{h}  \left|\sum_{k= 1}^{2^{\mu-1}} e^{2 \pi i h n_{N_\mu+k} \alpha}\right| \\
& \leq & \underbrace{\sum_{\mu=\lceil (\log_2 N)/4\rceil+1}^{M+1} (2^\mu)^{\lambda+\ve/2}}_{\ll (2^M)^{\lambda+\ve/2}}.
\end{eqnarray*}
Since $M \leq \log_2 N$, together with~\eqref{lastt} we have shown that 
$$
N D_N^* (\{n_1 \alpha\}, \dots, \{n_N \alpha\}) \leq N^{\lambda+\ve} 
$$
for all sufficiently large $N$, which proves the upper bound in Theorem~\ref{th_thue2}.\\

\emph{Part 2: lower bound.}\\

By~\eqref{BL1} and~\eqref{lacproductsLB} for the discrepancy $D^{*}_{N}$ of the sequence $(\left\{n_{k} \alpha\right\})_{k \geq 1}$ with $N=2^{L}$ for each positive integer $H$ we have 
\begin{eqnarray*}
D^{*}_{N} & \geq & \frac{1}{4H} \left|\frac{1}{N} \sum^{N}_{k=1} e^{2 \pi i n_{k} H \alpha }\right| \\
& \geq & \frac{1}{4H} \prod^{L}_{\ell=0} \left|\sin \pi H 2^{\ell} \alpha \right|- \underbrace{\frac{1}{4H} \prod^{L}_{\ell=0} \left|\cos \pi H 2^{\ell} \alpha \right|}_{\ll(H \left\|H \alpha \right\|)^{-1}}.
\end{eqnarray*}
Let
$$
f_L (\alpha) = \prod^{L}_{\ell=0} \left|\sin \pi 2^{\ell} \alpha \right|.
$$
We will show below that for any given $\varepsilon > 0$ for almost all $\alpha$ there are infinitely many $L$ such that there exists a positive integer $h_{L}$ with $h_{L} \leq 2^{L}$ and
\begin{equation} \label{F2}
\frac{1}{h_{L}} f_{L} \left(h_L \alpha \right) \gg \lambda^{L(1+\ve)}.
\end{equation}
It is a well-known fact in metric Diophantine approximation that for almost all $\alpha$ we have $h \left\|h \alpha \right\| \geq \frac{1}{h^{\ve}}$ for all $h$ large enough. Hence if~\eqref{F2} is true for almost all $\alpha$ then there are infinitely many $L$ such that for $N=2^{L}$ we have
\begin{eqnarray}
D^{*}_{N} & \gg & \lambda^{L(1+\ve)} - \left(h_{L}\right)^{\varepsilon} \label{HL}\\
 & \gg & \left(2^{L}\right)^{\left(1+ \varepsilon\right) \frac{\log \lambda}{\log 2}} - \left(2^{L}\right)^{\varepsilon}\nonumber\\
& \gg & N^{\left(1+ \varepsilon\right) \frac{\log \lambda}{\log 2}}\nonumber,
\end{eqnarray}
and the desired result follows (note that $\log \lambda <0$). It remains to show the existence of the numbers $h_{L} \leq 2^{L}$ which satisfy~\eqref{F2}.\\

Let $\ve > 0$ be given. From the definition of $f_L(\alpha)$ it is easily seen that
\begin{equation} \label{derivative}
\left|f_L(\alpha_1) - f_L(\alpha_2)\right| \leq 2^{L+1} \pi |\alpha_1 - \alpha_2|;
\end{equation}
this follows from the fact that the derivative of the function $\prod^{L}_{\ell=0} \sin \pi 2^{\ell} \alpha$ is bounded uniformly by $2^{L+1} \pi$. Now let $g_L(\alpha)$ be the function defined by
$$
g_L(\alpha) = f_L(j 4^{-L}) \qquad \textrm{for $\alpha \in \Big[j 4^{-L}, (j+1) 4^{-L}\Big)$}, \qquad \textrm{for $j=0, \dots, 4^L - 1$}. 
$$
This definition means that $g_L$ is constant on intervals of length $4^{-L}$ which lie between two integer multiples of $4^{-L}$, and coincides with $f_L$ on the left endpoint of such intervals. By~\eqref{derivative} we have
\begin{equation} \label{gf}
\left|g_L - f_L\right| \leq 2 \pi 2^{-L},
\end{equation}
which means that it is sufficient to prove~\eqref{F2} with $f_L$ replaced by $g_L$ (remember that the value of $\ve>0$ was arbitrary and $\lambda>1/2$). The reason for using the functions $g_L$ instead of $f_L$ is that every function $g_L$ can be written as a sum of at most $4^L$ different indicator functions of intervals; consequently, we know that the set of values of $\alpha$ where $|g_L|$ is ``large'' can be written as the union of at most $4^L$ intervals, which implies an upper bound for the size of the Fourier coefficients of the indicator function of this set (see below for details).\\

Let $Q=Q\left(\ve\right)$ be a positive integer which will be chosen in dependence on $\ve$ (we assume that $Q$ is ``large''). We define real numbers
$$
\delta_{i} = \left(\frac{1}{2}\right)^{1-\frac{i}{Q+1}}, \qquad i = 0, 1, \dots, Q+1.
$$
Furthermore, we define 
$$
M^{\left(i\right)}_{L} := \Big\{\alpha \in \left[0,1\right):~ \delta_{i}^{L} < |g_{L} \left(\alpha \right)| \leq \delta^{L}_{i+1}\Big\}
$$ 
for $i=0,1,\ldots, Q$.\\ 

Then by~\eqref{0664} and~\eqref{gf} we have
\begin{eqnarray*}
& & \sum^{Q}_{i=0} \mathbb{P} \left(M^{\left(i\right)}_{L}\right) \delta^{L}_{i+1} + \left(1-\sum^{Q}_{i=0} \mathbb{P} \left(M^{\left(i\right)}_{L}\right)\right) 2^{-L}\\
& \geq & \int^{1}_{0} |g_{L}\left(\alpha \right)| ~d\alpha  \\
& > & \frac{\kappa}{2} \lambda^{L}
\end{eqnarray*}
for sufficiently large $L$, where $\kappa$ and $\lambda$ are the numbers from~\eqref{0664}, and where we used the fact that $\lambda > 1/2$. Hence we have
$$
\sum^{Q}_{i=0} \mathbb{P} \left(M^{\left(i\right)}_{L}\right) \delta^{L}_{i+1} \geq \frac{\kappa}{4} \lambda^{L}
$$
for sufficiently large $L$. Consequently for every $L$ large enough there is an $i_{L} \in \left\{0, \ldots, Q\right\}$ with
$$
\delta_{i_{L}+1}^L \mathbb{P} \left(M^{\left(i_{L}\right)}_{L}\right) \geq \frac{\kappa}{4 Q} \lambda^{L},
$$
which implies that
$$
\mathbb{P}\left(M^{\left(i_{L}\right)}_{L}\right) \geq \frac{\kappa}{4 Q} \left(\frac{\lambda}{\delta_{i_{L}+1}}\right)^{L}.
$$
Note that, as a consequence of the construction of $g_L$, the set $M^{\left(i\right)}_{L}$ always is a union of at most $4^L$ disjoint intervals. It is easily seen that by trimming the sets $M^{\left(i\right)}_{L}$ appropriately we can always find a set $R^{\left(i\right)}_{L}$ such that $R^{\left(i\right)}_{L} \subset M^{\left(i\right)}_{L}$, such that $R^{\left(i\right)}_{L}$ also is the union of at most $4^L$ intervals, and such that for the measure of the sets $R^{\left(i\right)}_{L}$ we have the exact equality
$$
\mathbb{P} \left(R_{L}^{(i_{L})}\right) = \frac{\kappa}{4 Q} \left(\frac{\lambda}{\delta_{i_{L}+1}}\right)^{L}.
$$

Let $\eta=\eta(\ve)>0$ be a ``small'' number. Let $H_L$ denote the largest integer such that
\begin{equation} \label{hldef}
H_L \leq \frac{4 Q}{\kappa} \left(\frac{\delta_{i_L+1}}{\lambda}\right)^{L} (1+\eta)^L.
\end{equation}
Note that the right-hand side of~\eqref{hldef} can be written as
$$
\frac{1}{\mathbb{P} \left(R_{L}^{(i_{L})}\right)}~  (1+\eta)^L,
$$
and consequently we have $H_L \geq (1+\eta)^L$; that means, $H_L$ grows exponentially in $L$. Note also that it is easily seen that $H_L\leq 2^L$ for sufficiently large $L$ (provided that $\eta$ is chosen sufficiently small), which is important for~\eqref{HL}.\\

We will show that for almost all $\alpha$ for infinitely many $L$ there is a
\begin{equation} \label{F10}
h \leq H_L~\mbox{such that}~\left\{h \alpha\right\} \in R_{L}^{(i_L)}.
\end{equation}
For these $h$ then we have
\begin{eqnarray*}
\frac{|g_{L}(h \alpha)|}{h} & \geq & \delta_{i_L}^{L} \frac{\kappa}{4 Q} \left(\frac{\lambda}{\delta_{i_L+1}}\right)^{L} (1+\eta)^{-L} \\
& = & \frac{\kappa}{4 Q} \left(\frac{\delta_{i_L}}{\delta_{i_L+1}}\right)^{L}  (1+\eta)^{-L} \lambda^{L} \\
& \gg & \lambda^{L(1+\ve)}, \\
\end{eqnarray*} 
for $Q$ large enough and $\eta$ small enough in dependence on $\ve$. Together with~\eqref{gf} this will establish~\eqref{F2}, as desired.\\

It remains to show~\eqref{F10}. Let $\mathbf{1}_L(\alpha)$ denote the indicator function of the set $R^{\left(i\right)}_{L}$, extended with period one. Then we know that
\begin{equation} \label{integ}
\int_0^1 \mathbf{1}_L (\alpha) ~d \alpha =  \frac{\kappa}{4 Q} \left(\frac{\lambda}{\delta_{i_{L}+1}}\right)^{L}.
\end{equation}
Setting
$$
\mathbb{I}_L (\alpha) = \mathbf{1}_L (\alpha) - \int_0^1 \mathbf{1}_L (\omega) ~d \omega, 
$$
we clearly have $\int_0^1 \mathbb{I}_L(\alpha) ~d\alpha=0$ and 
\begin{equation} \label{ilvar}
\textup{Var}_{[0,1]} ~ \mathbb{I}_L \leq 4^L. 
\end{equation}
From~\eqref{integ} we can easily calculate that
\begin{equation} \label{il2norm}
\|\mathbb{I}_L\|_2^2 = \int_0^1 \mathbb{I}_L (\alpha)^2  d\alpha = \frac{\kappa}{4 Q} \left(\frac{\lambda}{\delta_{i_{L}+1}}\right)^{L} \left(1 - \frac{\kappa}{4 Q} \left(\frac{\lambda}{\delta_{i_{L}+1}}\right)^{L} \right) ~\leq~ \frac{\kappa}{4 Q} \left(\frac{\lambda}{\delta_{i_{L}+1}}\right)^{L}.
\end{equation}
We write 
$$
\mathbb{I}_L (\alpha) \sim \sum_{j=1}^\infty \left(a_j \cos 2 \pi j \alpha + b_j \sin 2 \pi j \alpha\right)
$$
for the Fourier series of $\mathbb{I}_L$ (note that it has no constant term, since $\mathbb{I}_L$ has integral zero). In the sequel, we want to show that the sum 
\begin{equation} \label{ai1}
\sum_{h \leq H_L} \int_0^1 \mathbf{1}_L (h \alpha) ~d \alpha
\end{equation}
is large in comparison with the sum 
\begin{equation} \label{ai2}
\sum_{h\leq H_L} \mathbb{I}_L (h \alpha),
\end{equation}
for almost all $\alpha$ and infinitely many $L$. Since
\begin{equation} \label{lefthand}
\sum_{h\leq H_L} \mathbf{1}_L (h \alpha) = \sum_{h\leq H_L} \int_0^1 \mathbf{1}_L (\omega) ~d \omega + \sum_{h\leq H_L} \mathbb{I}_L (h \alpha),
\end{equation}
such an estimate will show that the sum on the left-hand side of~\eqref{lefthand} is large (for almost all $\alpha$, for infinitely many $L$), which in turn implies that many of the events described in~\eqref{F10} will occur. A lower bound for~\eqref{ai1} is easy to obtain; to find an asymptotic upper bound for~\eqref{ai2}, we will calculate the $L^2$ norm of these sums, and apply the Borel-Cantelli lemma.\\

From~\eqref{integ} we directly obtain
\begin{equation} \label{summe}
\sum_{h=1}^{H_L} \int_0^1 \mathbf{1}_L (\alpha) ~d \alpha = H_L \frac{\kappa}{4 Q} \left(\frac{\lambda}{\delta_{i_{L}+1}}\right)^{L} \gg (1+\eta)^L.
\end{equation}
Next we estimate 
$$
\left\| \sum_{h=1}^{H_L} \mathbb{I}_L (h \cdot) \right\|_2,
$$
which is relatively difficult. As a consequence of~\eqref{ilvar} and a classical inequality for the size of the Fourier coefficients of functions of bounded variation (see for example~\cite[p. 48]{zyg}) we have
\begin{equation} \label{furk}
|a_j| \leq \frac{\textup{Var}_{[0,1]} \mathbb{I}_L}{2j} \leq \frac{4^L}{j}, \qquad \textrm{and similarly} \qquad |b_j| \leq \frac{4^L}{j}.
\end{equation}
We split the function $\mathbb{I}_L$ into an even and an odd part (that is, into a cosine- and a sine-series). In the sequel, we consider only the even part; the odd part can be treated in exactly the same way. Let $p_L(\alpha)$ denote the $4^{3L}$-th partial sum of the Fourier series of the even part of $\mathbb{I}_L$, and let $r_L(\alpha)$ denote the remainder term. Then by Minkowski's inequality we have
\begin{equation} \label{split}
\left\| \sum_{h=1}^{H_L} \mathbb{I}_L^{(\textup{even})} (h \cdot) \right\|_2 \leq \left\| \sum_{h=1}^{H_L} p_L (h \cdot) \right\|_2 + \left\| \sum_{h=1}^{H_L} r_L (h \cdot) \right\|_2.
\end{equation}
Furthermore,~\eqref{furk}, Minkowski's inequality, and Parseval's identity imply that
\begin{eqnarray}
\left\|\sum_{h=1}^{H_L} r_L (h \cdot) \right\|_2 & \leq & H_L \|r_L\|_2 \nonumber\\
& \leq & H_L \sqrt{\sum_{j=4^{3L} + 1}^\infty \frac{4^{2L}}{j^2}} \nonumber\\
& \leq & H_L 2^{-L} \nonumber\\
& \ll & 1. \label{split2}
\end{eqnarray}
To estimate the first term on the right-hand side of~\eqref{split}, we expand $p_L$ into a Fourier series and use the orthogonality of the trigonometric system. Then we obtain
\begin{eqnarray}
\left\| \sum_{h=1}^{H_L} p_L (h \cdot) \right\|_2^2 & = & \underbrace{\sum_{n_1,n_2=1}^{H_L} \sum_{j_1,j_2=1}^{4^{3L}}}_{j_1 n_1 = j_2 n_2} \frac{a_{j_1} a_{j_2}}{2}  \nonumber\\
& = &  \sum_{j_1,j_2=1}^{4^{3L}} \frac{a_{j_1} a_{j_2}}{2} ~\# \Big\{ (n_1, n_2):~1 \leq n_1, n_2 \leq H_L, ~j_1 n_1 = j_2 n_2 \Big\}. \label{rhssum}
\end{eqnarray}
To estimate the size of the sum on the right-hand size of~\eqref{rhssum}, we assume that $j_1$ and $j_2$ are fixed. In the case $j_1=1$ and $j_2=1$, we clearly have $j_1 n_1 = j_2 n_2$ whenever $n_1 = n_2$; thus the cardinality of the set on the right-hand side of~\eqref{rhssum} is $H_L$. If $j_1=1$ and $j_2=2$, then we have to count the number of pairs $(n_1,n_2)$ for which $2 n_1 = n_2$; this number is $\lfloor H_L / 2 \rfloor$. For the values $j_1=2$ and $j_2=4$ we also have to count the number of pairs $(n_1,n_2)$ for which $2 n_1 = n_2$; so this cardinality is also $\lfloor H_L / 2 \rfloor$. The last example shows that the greatest common divisor of $j_1$ and $j_2$ plays a role in this calculation. Using similar considerations, in the case of general (fixed) values of $j_1$ and $j_2$ it turns out that we have $j_1 n_1 = j_2 n_2$ whenever
$$
n_1 = v \frac{j_2}{\gcd(j_1,j_2)}, \quad n_2 = v \frac{j_1}{\gcd(v_1,v_2)} \qquad \textrm{for some positive integer $v$}.
$$
As a consequence we have
\begin{eqnarray}
& & \# \Big\{ (n_1, n_2):~1 \leq n_1, n_2 \leq H_L, ~j_1 n_1 = j_2 n_2 \Big\} \nonumber\\
& = & \# \left\{v \geq 1:~ v \leq \min \left( \frac{H_L \gcd(j_1,j_2)}{j_2}, \frac{H_L \gcd(j_1,j_2)}{j_1} \right) \right\} \nonumber\\
& = & \left\lfloor \frac{H_L \gcd(j_1,j_2)}{\max(j_1,j_2)} \right\rfloor \nonumber\\
& \leq & \frac{H_L \gcd(j_1,j_2)}{\sqrt{j_1 j_2}}. \nonumber
\end{eqnarray}
Combining this estimate with~\eqref{rhssum} we obtain
\begin{equation} \label{gcdsum}
\left\| \sum_{h=1}^{H_L} p_L (h \cdot) \right\|_2^2 \leq H_L \sum_{j_1,j_2=1}^{4^{3L}} \frac{|a_{j_1} a_{j_2}|}{2} \frac{\gcd(j_1,j_2)}{\sqrt{j_1 j_2}}.
\end{equation}
The sum on the right-hand side of the last equation is called a \emph{GCD sum}. It is well-known that such sums play an important role in the metric theory of Diophantine approximation; the particular sum in~\eqref{gcdsum} probably appeared for the first time in LeVeque's paper~\cite{LeVe} (see also~\cite{dhs} and~\cite{abs}). A precise upper bound for these sums has been obtained by Hilberdink~\cite{hilber}.\footnote{The upper bounds for the GCD sums in~\cite{hilber} are formulated in terms of the largest eigenvalues of certain GCD matrices; since these matrices are symmetric and positive definite, the largest eigenvalue also gives an upper bound for the GCD sum. This relation is explained in detail in~\cite{absw}.} Hilberdink's result implies that there exists an absolute constant $c_{\textup{abs}}$ such that
$$
\sum_{j_1,j_2=1}^{4^{3L}} \frac{|a_{j_1} a_{j_2}|}{2} \frac{\gcd(j_1,j_2)}{\sqrt{j_1 j_2}} \ll \exp \left( \frac{c_{\textup{abs}} \sqrt{\log (4^{3L})}}{\sqrt{\log \log 4^{3L}}} \right) \sum_{j=1}^{4^{3L}} a_j^2.
$$
Combining this estimate with~\eqref{il2norm} and~\eqref{gcdsum} (and using Parseval's identity) we have
\begin{eqnarray*}
\left\| \sum_{h=1}^{H_L} p_L (h \cdot) \right\|_2^2 & \ll & H_L \exp \left( \frac{c_{\textup{abs}} \sqrt{\log (4^{3L})}}{\sqrt{\log \log 4^{3L}}} \right) \frac{\kappa}{4 Q} \left(\frac{\lambda}{\delta_{i_{L}+1}}\right)^{L} \\
& \ll & (1+\eta)^L \exp \left( \frac{c_{\textup{abs}} \sqrt{\log (4^{3L})}}{\sqrt{\log \log 4^{3L}}} \right),
\end{eqnarray*}
and, together with~\eqref{split} and~\eqref{split2}, and with a similar argument for the odd part of $\mathbb{I}_L$, we obtain
\begin{equation} \label{parts}
\left\| \sum_{h=1}^{H_L} \mathbb{I}_L (h \cdot) \right\|_2^2 \ll (1+\eta)^L \exp \left( \frac{c_{\textup{abs}} \sqrt{\log (4^{3L})}}{\sqrt{\log \log 4^{3L}}} \right).
\end{equation}
By Chebyshev's inequality we have
\begin{eqnarray*}
\p \left( \alpha \in [0,1):~ \left| \sum_{h=1}^{H_L} \mathbb{I}_L (h \alpha) \right| > (\log H_L) \left\| \sum_{h=1}^{H_L} \mathbb{I}_L (h \cdot) \right\|_2 \right) & \leq & \frac{1}{(\log H_L)^2},
\end{eqnarray*}
and since $(H_L)_{L \geq 1}$ grows exponentially in $L$ these probabilities give a convergent series when summing over $L$. Thus by the Borel--Cantelli lemma with probability one only finitely many events
$$
\left| \sum_{h=1}^{H_L} \mathbb{I}_L (h \alpha) \right| > (\log H_L) \left\| \sum_{h=1}^{M} \mathbb{I}_L (h \cdot) \right\|_2
$$
happen, which by~\eqref{parts} implies that 
$$
\left| \sum_{h=1}^{H_L} \mathbb{I}_L (h \alpha) \right| \ll (1 + \eta)^{L/2} \exp \left( \frac{\hat{c}_{\textup{abs}} \sqrt{L}}{\sqrt{\log L}} \right)
$$
for some absolute constant $\hat{c}_{\textup{abs}}$. Comparing this upper bound with~\eqref{summe} and using~\eqref{lefthand} we conclude that
$$
\sum_{h=1}^{H_L}  \mathbf{1}_L (h \alpha) \gg (1+\eta)^L \qquad \textrm{as $L \to \infty$}
$$
for almost all $\alpha$. In particular we have
$$
\sum_{L=1}^\infty \sum_{h=1}^{H_L}  \mathbf{1}_L (h \alpha) = \infty
$$
for almost all $\alpha$, which means that for almost all $\alpha$ infinitely many events~\eqref{F10} occur. As noted after equation~\eqref{F10}, this proves the theorem.
\end{proof}

\section{Concrete Examples: Proof of Theorem~\ref{th_thue4}} \label{sect_proof_th4}

It is known (see~\cite{foumau}, formula~(2.10)) that for all $\alpha$ we have
\begin{equation} \label{F20}
\prod^{L}_{\ell=0} \left|\sin \pi 2^{\ell} \alpha\right| \leq \mathcal{H}^{L}
\end{equation}
for all $L$, where $\mathcal{H} = \frac{\sqrt{3}}{2} = 0.866 \ldots$. Thus from~\eqref{lacproducts},~\eqref{BL3} and the Weyl criterion it follows that $\left(\left\{n_{k} \alpha\right\}\right)_{k\geq 1}$ is u.d. mod 1 iff $\alpha$ is irrational.\\
Hence by~\eqref{erdtur},~\eqref{BL1},~\eqref{lacproducts}, and~\eqref{lacproductsLB} for $N$ of the form $N=2^{L}$ we have
\begin{eqnarray*}
\underset{h\leq N}{\max} \frac{1}{h} \left|\prod^{L}_{l=0} |2 \sin \pi h 2^{l} \alpha | - \prod^{L}_{l=0} |2 \cos \pi h 2^{l} \alpha |\right| & \ll & N\widetilde{D}_{N}^{*} \\
& \ll & \sum^{N}_{h=1} \frac{1}{h\left\|h \alpha\right\|} + \sum^{N}_{h=1} \frac{1}{h} \prod^{L}_{k=0} \left|2 \sin \pi 2^{k} h \alpha\right| \\
& \ll & \sum^{N}_{h=1} \frac{1}{h \left\|h \alpha\right\|}+ (\log N) N^{\frac{\log3}{\log 4}},
\end{eqnarray*}
where we write $\widetilde{D}_{N}^{*}$ for the star-discrepancy of the first $N$ terms of the Thue--Morse--Kronecker sequence.\\

From the left-hand side of this inequality it is not difficult -- but we do not want to go into the details here -- to show that
$$
\underset{n \leq N}{\max} ~n \widetilde{D}_{n}^{*} \gg \frac{1}{\log N} \underset{n \leq N}{\max}~ n D_{n}^{*}
$$
for all $N$, where $D^{*}_{N}$ denotes the star-discrepancy of the pure Kronecker sequence.\\

Furthermore it is easy to show -- we again do not go into the details -- that
$$
\sum^{N}_{h=1} \frac{1}{h \left\|h \alpha\right\|} \ll (\log N) N D_{N}^{*} ~\mbox{for all}~ N=2^{L}.
$$
Hence for all $N$ we have
\begin{equation} \label{BL2}
\frac{1}{\log N} \underset{n \leq N}{\max} ~n D_{n}^{*} \ll \underset{n \leq N}{\max}~ n \widetilde{D}_{n}^{*} \ll \left(\log N\right)^{2} \underset{n \leq N}{\max} ~n D_{n}^{*} + \left(\log N\right)^{2} N^{\frac{\log3}{\log 4}}.
\end{equation}
From this we conclude that the order of the discrepancy $N\widetilde{D}_{N}^{*}$ of the Thue--Morse--Kronecker sequence always is essentially (up to logarithmic factors) larger or equal to the order of the discrepancy $N D_{N}^{*}$ of the pure Kronecker sequence. In fact, the order of $N \widetilde{D}_{N}^{*}$ essentially equals the order of $N D_{N}^{*}$ plus an expression which is at most of order $N^{\frac{\log 3}{\log 4}+ \ve}$. The order of the additional expression is controlled by lacunary products of sine-functions.\\

Hence we conclude what we have already announced in Section~\ref{sect_int}:
\begin{itemize}
\item [-] If the order of $D_{N}^{*}$ satisfies $N D_{N}^{*}= \Omega \left(N^{\frac{\log 3}{\log 4}}\right)$ then $\widetilde{D}_{N}^{*}$ essentially is of the same order as $D_{N}^{*}$
\item [-] If $D_{N}^{*}$ satisfies $N D_{N}^{*} = \mathcal{O} \left(N^{\frac{\log 3}{\log 4}}\right)$ then $\widetilde{D}_{N}^{*}$ satisfies $N \widetilde{D}_{N}^{*}= \mathcal{O} \left(N^{\frac{\log 3}{\log 4} + \ve}\right)$
\end{itemize}
Hence the two examples given in Theorem~\ref{th_thue4} show interesting non-trivial cases where we have (almost) best possible distribution for the pure Kronecker sequence with bad distribution for the Thue--Morse--Kronecker sequence. Indeed especially the first example gives essentially the extremal values for $D_{N}^{*}$ and for $\widetilde{D}_{N}^{*}$, and shows that the right-hand side of~\eqref{BL2} is also essentially optimal.\\

It remains an open problem to give concrete examples $\alpha$ where the corresponding Thue--Morse--Kronecker sequence has ``small'' discrepancy $D_{N}^{*}$, e.g., a discrepancy of the metric order given in Theorem~\ref{th_thue2} or smaller. Of course it also remains an open problem to give good estimates for $\widetilde{D}_{N}^{*}$ in the case of ``natural'' examples of $\alpha$ like $\alpha = \sqrt{2}$.\\

\begin{proof}[Proof of Theorem~\ref{th_thue4}]~\\
\begin{enumerate}
\item [a)] By~\cite{Sh2} we know that the continued fraction coefficients of the number $\beta := \sum^{\infty}_{k=1} \frac{1}{4^{2^{k}}}$ are bounded. This is equivalent with the existence of some $c > 0$ such that $\left|\beta-\frac{p}{q}\right|> \frac{c}{q^{2}}$ for all $p,q \in \mathbb{Z}, q \geq 1,$ i.e.,~~$\left|q \beta -p\right| > \frac{c}{q}$ for all such $p$ and $q$.\\

\noindent This implies especially $\left|3 q \beta+ 2 q-3 p\right| > \frac{c}{3 q}$ for all $p, q \in \mathbb{Z}, q \geq 1$, i.e.,\\
$\left|\beta + \frac{2}{3} - \frac{p}{q}\right| > \frac{\frac{c}{9}}{q^{2}}$ for all such $p,q,$ and hence $ \alpha= \beta + \frac{2}{3}$ has bounded continued fraction coefficients. So the star-discrepancy of the pure Kronecker sequence $\left(\left\{n \alpha\right\}\right)_{n \geq 1}$ satisfies $N D_{N}^{*}= \mathcal{O} \left(\log N\right).$\\

\noindent On the other hand we already know that for the star-discrepancy $\widetilde{D}_{N}^{*}$ of the Thue--Morse--Kronecker sequence $\left(\left\{n_{k} \alpha \right\}\right)_{k \geq 1}$ with $N= 2^{L}$ we have
\begin{eqnarray*}
N \widetilde{D}^{*}_{N} & \gg & \left|\sum^{N}_{k=1} \exp \left(2 \pi i n_{k} \alpha \right)\right| \\
& \gg & \left|\prod^{L}_{\ell=0} \left|2 \sin \pi 2^{\ell} \alpha \right|- \frac{1}{\left\|\alpha\right\|}\right|.
\end{eqnarray*}
We give a suitable lower bound for
\begin{eqnarray*}
\Pi_{L} & := & \prod^{L}_{\ell=0} \left|2 \sin \pi 2^{\ell} \alpha \right| \\
& = & \prod^{L}_{\ell=0} \left|2 \sin \frac{\pi}{3}\right| \prod^{L}_{\ell=0} \frac{\left|\sin \pi 2^{\ell} \alpha \right|}{\left|\sin \frac{\pi}{3}\right|} \\
& = & \sqrt{3} N^{\frac{\log3}{\log 4}} \prod^{L}_{\ell=0} \left|\cos \left(\pi \delta_{\ell}\right)+ \frac{\left(-1\right)^{\ell+1}}{\sqrt{3}} \sin \left(\pi \delta_{\ell}\right)\right|.
\end{eqnarray*}
Here $\delta_{\ell} := \left\{2^{\ell} \beta \right\}$ and we have used $\sin \left(x+y\right) = \sin x \cdot \cos y + \cos x \cdot \sin y.$\\
Note that the base 2 representation of $\left\{2^{\ell} \alpha \right\}$ has one of the following ten possible forms:
$$0.1010\ldots$$
$$0.0101 \ldots$$
$$0.0010 \ldots$$
$$0.1101\ldots$$
$$0.1110\ldots$$
$$0.1001 \ldots$$
$$0.1100 \ldots$$
$$0.0111 \ldots$$
$$0.1011 \ldots$$
$$0.0110 \ldots$$
hence $\left\|2^{\ell} \alpha \right\| > \frac{1}{16}$ always and therefore $\frac{\left|\sin \pi 2^{\ell} \alpha \right|}{\left|\sin \frac{\pi}{3}\right|} > 0.2$ always.\\

\noindent Because of $\left|\cos \pi x -1\right| \leq 3x$ and $\left|\sin \pi x\right| \leq \pi x$ for $x \geq 0$ we have 
$$
\left|\cos \pi \delta_{\ell} + \frac{\left(-1\right)^{\ell+1}}{\sqrt{3}} \sin \pi \delta_{\ell} \right| \geq 1- \left(3+ \frac{\pi}{\sqrt{3}}\right) \delta_{\ell} > 1- 5 \delta_{\ell}.
$$
Therefore, noting that $\max (0.2, ~1-5x) > e^{-11x}$ for $x >0$, we also have
\begin{eqnarray*}
\prod^{L}_{\ell=0} \frac{\left|\sin \pi 2^{\ell} \alpha\right|}{\left|\sin \frac{\pi}{3}\right|} & > & \prod^{L}_{\ell=0} \max \left(0.2, ~1-5 \delta_{\ell}\right) \\
& > & e^{-11 \sum^{L}_{\ell=0} \delta_{\ell}} \\
& \gg & e^{-22 \log L}.
\end{eqnarray*}
So
$$
\Pi_{L} \gg \frac{1}{L^{22}} N^{\frac{\log 3}{\log 4}} \gg \frac{1}{\left(\log N\right)^{22}} N^{\frac{\log3}{\log 4}}
$$
and the lower bound for $\widetilde{D}^{*}_{N}$ follows. The upper bound for $\widetilde{D}^{*}_{N}$ follows from~\eqref{BL2} and from the upper bound for $D^{*}_{N}$.\\

\item [b)] It was shown in~\cite{YB} that $\gamma$ has approximation degree 1, hence for the star-discrepancy of the sequence $\left(\left\{n \alpha \right\}\right)_{n \geq 1}$ we have $N D_{N}^{*} = \mathcal{O} \left(N^{\ve}\right)$ for every $\ve >0.$ To prove the lower bound for the star-discrepancy $\widetilde{D}_{N}^{*}$ of the sequence $\left(\left\{n_{k} \gamma\right\}\right)_{k \geq 1}$, like in the proof of part a) we have to estimate $\Pi_{L} := \prod^{L}_{\ell=0} \left|2 \sin \pi 2^{\ell} \gamma \right|$ from below. We will give in the following as an additional information also an upper estimate for $\Pi_{L}$ in order to show that our lower estimate is rather sharp.\\
We may restrict ourselves to $L$ of the form $L= 8 U -1$. Then
$$
\Pi_{L} = \prod^{U-1}_{j=0} \prod^{8j +7}_{\ell=8 j} \left|2 \sin \pi 2^{\ell} \gamma\right|.
$$
In the following we use some well-known facts on properties of the Thue--Morse sequence:\\
The base 2-representation of $\gamma=0,\gamma_{1} \gamma_{2} \gamma_{3} \ldots$ consists of 8-blocks $\gamma_{8 v+1} \ldots \gamma_{8 v+8}$ of the form $A:= 10010110$ or $B:= 01101001.$\\

\noindent Four such consecutive 8-blocks can occur in the following ten combinations:
$$c_{1} = AABA$$
$$c_{2} = AABB$$
$$c_{3} = ABAA$$
$$c_{4} = ABBA$$
$$c_{5} = ABAB$$
$$c_{6} = BBAB$$
$$c_{7} = BBAA$$
$$c_{8} = BABB$$
$$c_{9} = BAAB$$
$$c_{10} = BABA$$
Let, for example, $j$ be such that $2^{8 j} = 0,c_{1} \ldots.$ \quad ($c_{1}$ is the block of 32 digits defined

\hspace{8,3cm} above)\\
For $m=0,1,2,\ldots, 7$ let
\begin{align*}
x_{1,m} := \left\{ \begin{array}{lll}
2^{m} \cdot 0, c_{1} & \mbox{if} & \left\{2^{m} \cdot 0, c_{1}\right\} < \frac{1}{2}\\
2^{m} \cdot \left(0, c_{1} + \frac{1}{2^{32}}\right) & \mbox{if} & \left\{2^{m} \cdot 0, c_{1} \right\} > \frac{1}{2}\\
\end{array} \right.
\end{align*}
and
\begin{align*}
y_{1,m} := \left\{ \begin{array}{lll}
2^{m} \cdot 0, c_{1} & \mbox{if} & \left\{2^{m} \cdot 0, c_{1}\right\} > \frac{1}{2}\\
2^{m} \cdot \left(0, c_{1} + \frac{1}{2^{32}}\right) & \mbox{if} & \left\{2^{m} \cdot 0, c_{1} \right\} < \frac{1}{2}\\
\end{array} \right.
\end{align*}
\end{enumerate}
Then
$$
\prod^{8 j+7}_{\ell=8 j} \left|2 \sin \pi 2^{\ell} \gamma\right| < \prod^{7}_{m=0} \left|2 \sin \pi y_{1,m}\right|=:U(c_{1}) = 33.487710\ldots 
$$
$$
\prod^{8 j +7}_{\ell=8 j} \left|2 \sin \pi 2^{\ell} \gamma\right| > \prod^{7}_{m=0} \left|2 \sin \pi x_{1,m}\right| =: D(c_{1}) = 33.487705\ldots
$$
In the same way we determine 
$$U\left(c_{i}\right) \mbox{and}~ D\left(c_{i}\right) \quad \mbox{for} ~ i=2,3, \ldots, 10.$$
Further it is well known that the frequencies $F\left(c_{i}\right)$ of the occurrence of a quadruple $c_{i}$ of $8$-blocks in the Thue--Morse sequence are given by 
$$
F\left(c_{1}\right)= F\left(c_{2}\right) = F\left(c_{3}\right)=F\left(c_{5}\right)=F\left(c_{6}\right)=F\left(c_{7}\right)=F\left(c_{8}\right)=F\left(c_{10}\right)= \frac{1}{12}
$$
and
$$
F\left(c_{4}\right)=F\left(c_{9}\right)= \frac{1}{6}.
$$
Hence we get
$$
( 1 - \epsilon )^U \cdot\left(D\left(c_{1}\right) D\left(c_{2}\right) D\left(c_{3}\right) D\left(c_{5}\right) D\left(c_{6}\right) D\left(c_{7}\right) D\left(c_{8}\right) D\left(c_{10}\right)\right)^{\frac{U}{12}} \cdot \left(D\left(c_{4}\right) D\left(c_{9}\right)\right)^{\frac{U}{6}} \leq
$$
$$
\ll \prod^{L}_{\ell=0} \left|2 \sin \pi 2^{\ell} \gamma \right| \ll
$$
$$
(1 + \epsilon )^U \cdot \left(U\left(c_{1}\right) U\left(c_{2}\right) U\left(c_{3}\right) U\left(c_{5}\right) U\left(c_{6}\right) U\left(c_{7}\right) U\left(c_{8}\right) U\left(c_{10}\right)\right)^{\frac{U}{12}} \cdot \left(U\left(c_{4}\right) U\left(c_{9}\right)\right)^{\frac{U}{6}}
$$
which leads to
$$
N^{0.6178775} \ll \prod^{L}_{\ell=0} \left|2 \sin \pi 2^{\ell} \gamma\right| \ll N^{0.6178777}.
$$
for $\epsilon$ small and $L$ large enough. This finishes the proof.
\end{proof}

\section{An open problem from the theory of metric Diophantine approximation} \label{sec_conc}

In conclusion, we mention an open problem from the theory of Diophantine approximation which is related to our proof of the lower bound in Theorem~\ref{th_thue2}. In metric Diophantine approximation, one is often interested in finding conditions on $(\phi(q))_{q \geq 1}$ which guarantee that 
$$
\left|\alpha - \frac{p}{q}\right| < \frac{\phi(q)}{q}
$$
has infinitely many integer solutions $p,q$ for almost all $\alpha$. Two instances of this problem, either under the additional requirement that $p,q$ are coprime (\emph{Duffin--Schaeffer conjecture}) or without this additional requirement (\emph{Catlin conjecture}), constitute probably the two most important open problems in metric number theory. For the origin of the Duffin--Schaeffer conjecture see~\cite{duff}, for the Catlin conjecture see~\cite{catlin}. Problems of this type are discussed in great detail in Glyn Harman's monograph on \emph{Metric Number Theory}~\cite{harman}. For a recent survey, see~\cite{ber}.\\

The problem without the requirement of coprime solutions can also be written in the following form: Let $A_1, A_2, \dots$ be intervals of length $\leq 1$, which are symmetric around 0. Let $\psi_1, \psi_2, \dots$ denote the Lebesgue measure (that is, the length) of these intervals. Under which conditions on $\psi_1, \psi_2, \dots$ do we have
$$
\sum_{n=1}^\infty \mathbf{1}_{A_n} (n \alpha) = \infty
$$
for almost all $\alpha$? Here $\mathbf{1}_A$ denotes the indicator function of $A$, extended with period one.\\

Now in a first step this problem can be generalized to the case when the intervals $A_1, A_2, \dots$ are not necessarily symmetric around $0$, which leads to a problem in \emph{inhomogeneous} Diophantine approximation. This type of question is also quite well-investigated.\\

Perpetuating this line of thought, it is natural to ask what happens if we don't assume that $A_1, A_2, \dots$ are \emph{intervals}, but if they may denote any \emph{measurable sets} in $[0,1]$. Writing $\psi_1, \psi_2, \dots$ for the measure of these sets, the question is under which conditions on $\psi_1, \psi_2, \dots$ we have
$$
\sum_{n=1}^\infty \mathbf{1}_{A_n} (n \alpha) = \infty
$$
for almost all $\alpha$. Note that a necessary condition is the divergence of the sum of the measures, by the Borel-Cantelli lemma. It seems that hardly anything is known about this general problem.  As far as we know, this problem was first stated by LeVeque in~\cite{LeVe}. In this paper he had answered a conjecture of Erd\H{o}s, and he formulated a generalized version of Erd\H{o}s' conjecture. We consider this as a very interesting open problem, and we re-state it below.\\

\textbf{Open problem:} \quad Let $A_1, A_2, \dots$ be measurable sets in $[0,1]$, and let $\psi_1, \psi_2, \dots$ denote their measure. Under which conditions on $(\psi_n)_{n \geq 1}$ is it true that for almost all $\alpha$ the fractional part $\{n \alpha\}$ is contained in the set $A_n$ for infinitely many indices $n$; equivalently, under which conditions is it true that 
$$
\sum_{n=1}^\infty \mathbf{1}_{A_n} (n \alpha) = \infty \qquad \textrm{almost everywhere},
$$
where the indicator functions are extended with period one.\\

A problem quite similar to this one emerged during the proof of the lower bound of Theorem~\ref{th_thue2}. However, the situation was comparatively simple there, for example since we could assume there that the sets $A_n$ can be written as the sum of a moderate number of intervals. The general problem seems to be much more complicated; LeVeque wrote that this general problem ``seems rather intractable''.

\def\cprime{$'$}

\end{document}